\documentclass[a4paper,11pt]{article}
\usepackage[numbers,sort&compress]{natbib}
\usepackage{amsmath,amsthm,amsfonts,amssymb,graphicx,mathtools,iwona,float,xcolor}
\usepackage[colorlinks=true,citecolor=black,linkcolor=black,urlcolor=blue,pagebackref]{hyperref}
\usepackage[tmargin=34mm,bmargin=34mm,lmargin=26mm,rmargin=26mm]{geometry}
\usepackage{rsfso}
\usepackage[noabbrev,capitalise]{cleveref}
\makeatletter
\def\NAT@spacechar{~}
\makeatother
%
%
\renewcommand*{\backref}[1]{}
\renewcommand*{\backrefalt}[4]{\footnotesize\hspace*{0pt}\hfill \ifcase #1 \mbox{[not cited]} \or  \mbox{[p.\,#2]}  \else \mbox{[pp.\,#2]} \fi}
\theoremstyle{plain}
\newtheorem{thm}{Theorem}
\newtheorem{lem}[thm]{Lemma}
\newtheorem{cor}[thm]{Corollary}

\newtheorem{conj}[thm]{Conjecture}
\newtheorem*{claim*}{Claim}
\crefname{lem}{Lemma}{Lemmas}
\crefname{thm}{Theorem}{Theorems}
\crefname{prop}{Proposition}{Propositions}
\crefname{conj}{Conjecture}{Conjectures}
\newcommand{\arXiv}[1]{arXiv:\,\href{http://arxiv.org/abs/#1}{#1}}
\newcommand{\msn}[1]{MR:\,\href{http://www.ams.org/mathscinet-getitem?mr=MR#1}{#1}}
\newcommand{\doi}[1]{doi:\,\href{http://dx.doi.org/#1}{#1}}

\setlength{\parindent}{0cm}
\setlength{\parskip}{2ex}
\renewcommand{\baselinestretch}{1.15}
\usepackage[hang]{footmisc}

\setlength{\footnotesep}{\baselinestretch\footnotesep}
\renewcommand{\thefootnote}{\fnsymbol{footnote}}	
\allowdisplaybreaks
\sloppy
\newcommand\DateFootnote{
\begingroup
\renewcommand\thefootnote{}
\footnote{\today}
\setcounter{footnote}{0}
\vspace*{-3ex}
\endgroup}
\makeatletter
\renewcommand\section{\@startsection {section}{1}{\z@}{-3ex \@plus -1ex \@minus -.2ex}{2ex \@plus.2ex}{\normalfont\large\bfseries}}
\renewcommand\subsection{\@startsection{subsection}{2}{\z@}{-2.5ex\@plus -1ex \@minus -.2ex}{1.5ex \@plus .2ex}{\normalfont\normalsize\bfseries}}
\renewcommand\subsubsection{\@startsection{subsubsection}{3}{\z@}{-2ex\@plus -1ex \@minus -.2ex}{1ex \@plus .2ex}{\normalfont\normalsize\bfseries}}
 \renewcommand\paragraph{\@startsection{paragraph}{4}{\z@}{1.5ex \@plus.5ex \@minus.2ex}{-1em}{\normalfont\normalsize\bfseries}}
\renewcommand\subparagraph{\@startsection{subparagraph}{5}{\parindent}  {1.5ex \@plus.5ex \@minus .2ex}  {-1em} {\normalfont\normalsize\bfseries}}
\makeatother
\renewcommand{\thefootnote}{\fnsymbol{footnote}}	
\DeclarePairedDelimiter\ceil\lceil\rceil

\renewcommand{\geq}{\geqslant}
\renewcommand{\leq}{\leqslant}
\DeclareMathOperator{\td}{td}
\DeclareMathOperator{\ctd}{\overline{td}}
\DeclareMathOperator{\dist}{dist}
\newcommand{\bigchi}{\raisebox{1.55pt}{\scalebox{1.25}{\ensuremath\chi}}}
\newcommand{\cchi}{\bigchi_{\star}\hspace*{-0.2ex}}
\newcommand{\dchi}{\bigchi\hspace*{-0.1ex}_{\Delta}\hspace*{-0.3ex}}
\newcommand{\GG}{\mathcal{G}}
\newcommand{\HH}{\mathcal{X}}
\newcommand{\MM}{\mathcal{M}}

\newcommand{\mc}[1]{\mathcal{#1}} 
\newcommand{\CompleteTree}[1]{\ensuremath{T\langle{#1}\rangle}}
\newcommand{\Closure}[1]{\ensuremath{C\langle{#1}\rangle}}
\newcommand{\WeakClosure}[1]{\ensuremath{W\langle{#1}\rangle}}

\newcommand{\pf}[2][Proof]{\vspace{-3ex}\begin{proof}[#1]#2\end{proof}}

\begin{document}

{\Large\bfseries\boldmath\scshape Clustered Colouring in Minor-Closed Classes}

\medskip
Sergey Norin\footnotemark[2] \quad
Alex Scott\footnotemark[3] \quad
Paul Seymour\footnotemark[4] \quad
David R. Wood\footnotemark[5] 

\DateFootnote

\footnotetext[2]{Department of Mathematics and Statistics, McGill University, Montr\'eal, Canada (\texttt{snorin@math.mcgill.ca}). \\
Supported by NSERC grant 418520.}

\footnotetext[3]{Mathematical Institute,  University of Oxford, Oxford, U.K.\ (\texttt{scott@maths.ox.ac.uk}).\\ 
Supported by a Leverhulme Research Fellowship.}

\footnotetext[4]{Department of Mathematics, Princeton University, New Jersey, U.S.A. (\texttt{pds@math.princeton.edu}). \\
Supported by ONR grant N00014-14-1-0084 and NSF grant DMS-1265563.}

\footnotetext[5]{School of Mathematical Sciences, Monash University, Melbourne, Australia (\texttt{david.wood@monash.edu}). \\
Supported by the Australian Research Council.}

\emph{Abstract.} The \emph{clustered chromatic number} of a class of graphs is the minimum integer $k$ such that for some integer $c$ every graph in the class is $k$-colourable with monochromatic components of size at most $c$. We prove that for every graph $H$, the clustered chromatic number of the class of $H$-minor-free graphs is tied to the tree-depth of $H$. In particular, if $H$ is connected with tree-depth $t$ then every $H$-minor-free graph is $(2^{t+1}-4)$-colourable with monochromatic components of size at most $c(H)$. This provides the first evidence for a conjecture of Ossona de Mendez, Oum and Wood (2016) about defective colouring of $H$-minor-free graphs. If $t=3$ then we prove that 4 colours suffice, which is best possible. We also determine those minor-closed graph classes with clustered chromatic number 2. Finally, we develop a conjecture for the clustered chromatic number of an arbitrary minor-closed class. 

\bigskip
\bigskip

\hrule

\bigskip

\renewcommand{\thefootnote}{\arabic{footnote}}

\section{Introduction}


In a vertex-coloured graph, a \emph{monochromatic component} is a connected component of the subgraph induced by all the vertices of one colour. A graph $G$ is \emph{$k$-colourable with clustering} $c$ if each vertex can be assigned one of $k$ colours such that each monochromatic component has at most $c$ vertices. We shall consider such colourings, where the first priority is to minimise the number of colours, with small clustering as a secondary goal. With this viewpoint the following definition arises. The \emph{clustered chromatic number} of a graph class $\GG$, denoted by $\cchi(\GG)$, is the minimum integer $k$ such that, for some integer $c$, every graph in $\GG$ has a $k$-colouring with clustering $c$. See \citep{WoodSurvey} for a survey on clustered graph colouring. 

This paper studies clustered colouring in minor-closed classes of graphs. A graph $H$ is a \emph{minor} of a graph $G$ if a graph isomorphic to $H$ can be obtained from some subgraph of $G$ by contracting edges. A class of graphs $\MM$ is \emph{minor-closed} if for every graph $G\in\MM$ every minor of $G$ is in $\MM$, and some graph is not in $\MM$. For a graph $H$, let $\MM_H$ be the class of $H$-minor-free graphs (that is, not containing $H$ as a minor). Note that we only consider simple finite graphs. 


As a starting point, consider Hadwiger's Conjecture, which states that every graph containing no $K_{t}$-minor is properly $(t-1)$-colourable. This conjecture is easy for $t\leq 4$, is equivalent to the 4-colour theorem for $t=5$, is true for $t=6$ \citep{RST-Comb93}, and is open for $t\geq 7$. The best known upper bound on the chromatic number is $O(t\sqrt{\log t})$, independently due to \citet{Kostochka82,Kostochka84} and \citet{Thomason84,Thomason01}. This conjecture is widely considered to be one of the most important open problems in graph theory; see \citep{SeymourHC} for a survey. 

Clustered colourings of $K_t$-minor-free graphs provide an avenue for attacking Hadwiger's Conjecture. \citet{KawaMohar-JCTB07} first proved a $O(t)$ upper bound on $\cchi(\MM_{K_t})$. In particular, they proved that every $K_t$-minor-free graph is $\ceil{\frac{31}{2}t}$-colourable with clustering  $f(t)$, for some function $f$. The number of colours in this result was  improved to $\ceil{\frac{7t-3}{2}}$ by \citet{Wood10}, to $4t-4$ by  \citet*{EKKOS15}, to $3t-3$ by \citet{LO17}, and to $2t-2$ by \citet{Norin15}. Thus $\cchi(\MM_{K_t})\leq 2t-2$. See \citep{Kawa08,KO16} for analogous results for graphs excluding odd minors. For all of these results, the function $f(t)$ is very large, often depending on constants from the Graph Minor Structure Theorem. Van den Heuvel and Wood~\cite{vdHW} proved the first such result with $f(t)$ explicit. In particular, they proved that every $K_t$-minor-free graph is $(2t-2)$-colourable with clustering  $\ceil{\frac{t-2}{2}}$. The result of \citet{EKKOS15} mentioned below implies that $\cchi(\MM_{K_t})\geq t-1$.  \citet{DN17} have announced a proof that $\cchi(\MM_{K_t})=t-1$. 

Now consider the class $\MM_H$ of $H$-minor-free graphs for an arbitrary graph $H$. The maximum chromatic number of a graph in $\MM_H$ is at most $O(|V(H)|\sqrt{\log|V(H)|})$ and is at least $|V(H)|-1$ (since $K_{|V(H)|-1}$ is $H$-minor-free), and Hadwiger's Conjecture would imply that $|V(H)|-1$ is the answer. However, for clustered colourings, fewer colours often suffice. For example, \citet{DN17}  proved that graphs embeddable on any fixed surface are 4-colourable with bounded clustering, whereas the chromatic number is $\Theta(\sqrt{g})$ for surfaces of Euler genus $g$. Van den Heuvel and Wood~\cite{vdHW} proved that $K_{2,t}$-minor-free graphs are 3-colourable with clustering $t-1$, and that  $K_{3,t}$-minor-free graphs are 6-colourable with clustering $2t$. These results show that $\cchi(\MM_{H})$ depends on the structure of $H$, unlike the usual chromatic number which only depends on $|V(H)|$. 

At the heart of this paper is the following question: what property of $H$ determines  $\cchi(\MM_H)$? The following definitions help to answer this question. Let $T$ be a rooted tree. The \emph{depth} of $T$ is the maximum number of vertices on a root--to--leaf path in $T$. The \emph{closure} of $T$ is obtained from $T$ by adding an edge between every ancestor and descendent in $T$. The \emph{connected tree-depth} of a graph $H$, denoted by $\ctd(H)$, is the minimum depth of a rooted tree $T$ such that $H$ is a subgraph of the closure of $T$. This definition is a variant of the more commonly used definition of the \emph{tree-depth} of $H$, denoted by $\td(H)$, which equals the maximum connected tree-depth of the connected components of $H$. See \citep{Sparsity} for background on tree-depth. If $H$ is connected, then $\td(H)=\ctd(H)$. In fact, $\td(H)=\ctd(H)$ unless $H$ has two connected components $H_1$ and $H_2$ with $\td(H_1)=\td(H_2)=\td(H)$, in which case $\ctd(H)=\td(H)+1$. We choose to work with connected tree-depth to avoid this distinction. 

The following result is the primary contribution of this paper; it is proved in \cref{TreeDepthBounds}.

\begin{thm}
\label{Main}
For every graph $H$,  $\cchi(\MM_H)$ is tied to the (connected) tree-depth of $H$. In particular, 
$$\ctd(H)-1 \leq \cchi(\MM_H) \leq 2^{\ctd(H)+1}-4.$$
\end{thm}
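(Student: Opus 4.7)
The inequality $\cchi(\MM_H) \geq \ctd(H) - 1$ would follow by combining the minor-monotonicity of tree-depth with the Edwards-Kang-Kim-Oum-Seymour lower bound $\cchi(\MM_{K_t}) \geq t - 1$. Since $\td$ is minor-monotone and equals $\ctd$ for connected graphs, every graph $G$ with $\td(G) < \ctd(H)$ is $H$-minor-free; a small refinement handles disconnected $H$ using $\ctd(H) \in \{\td(H), \td(H)+1\}$. Hence $\MM_H$ contains every graph of tree-depth at most $\ctd(H) - 1$, and within that subclass the EKKOS family, suitably adapted to have tree-depth at most $\ctd(H) - 1$, furnishes graphs requiring $\ctd(H) - 1$ colours for any bounded clustering.

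\textbf{Upper bound.} I would proceed by induction on $t := \ctd(H)$, targeting the bound $f(t) := 2^{t+1} - 4$, which satisfies $f(t) = 2f(t-1) + 4$ with $f(1) = 0$; the base case is vacuous since $\MM_{K_1} = \emptyset$. For the inductive step, first reduce to the case $H = C\langle T\rangle$ for some rooted tree $T$ of depth $t$: the definition of $\ctd$ yields such a $T$ with $H \subseteq C\langle T\rangle$, and since any graph having $C\langle T\rangle$ as a minor also has $H$ as a minor, $\MM_H \subseteq \MM_{C\langle T\rangle}$, so it suffices to bound $\cchi(\MM_{C\langle T\rangle})$. In this reduced case the root $r$ of $T$ is a dominating vertex of $C\langle T\rangle$. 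Given $G \in \MM_{C\langle T\rangle}$, run a BFS from any vertex to obtain layers $L_0, L_1, \ldots$: contracting $L_0 \cup \cdots \cup L_{i-1}$ (connected by BFS) into a single vertex $\hat v$ makes $\hat v$ adjacent to every vertex of $L_i$, so any $(C\langle T\rangle - r)$-minor of $G[L_i]$ would combine with $\hat v$ in the role of $r$ to give a $C\langle T\rangle$-minor of $G$; thus each $G[L_i]$ is $(C\langle T\rangle - r)$-minor-free. Writing $C\langle T\rangle - r = C\langle T_1\rangle \sqcup \cdots \sqcup C\langle T_s\rangle$ where $T_1, \ldots, T_s$ are the subtrees of $r$'s children (each of depth at most $t-1$), I would colour each $G[L_i]$ using $f(t-1) + 2 = 2^t - 2$ colours by isolating a small apex set $A_i \subseteq L_i$ whose removal leaves each component of $G[L_i] - A_i$ avoiding some single $C\langle T_j\rangle$ as a minor (so that the inductive hypothesis for $\ctd \leq t-1$ applies, giving $f(t-1) = 2^t - 4$ colours on the rest), and spending $2$ additional colours on $A_i$. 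Alternating disjoint colour palettes between even-indexed and odd-indexed layers doubles the palette to $2(2^t - 2) = 2^{t+1} - 4$ colours total and forces each monochromatic component to lie within a single layer, so the inductive clustering bound carries over.

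\textbf{Main obstacle.} The crux is the disconnection of $C\langle T\rangle - r$: when $T$ has multiple subtrees of maximum depth, $\ctd(C\langle T\rangle - r)$ can be $t$ rather than $t-1$, blocking a direct appeal to the inductive hypothesis on $G[L_i]$. Surmounting this with only a constant $+4$ loss in the colour count requires constructing an apex set $A_i$ that is small enough to be covered by $O(1)$ extra colours and such that each remaining component of $G[L_i] - A_i$ avoids some single $C\langle T_j\rangle$ as a minor, all while keeping the overall clustering bounded. Designing this apex, and certifying that removing it really does reduce the residual obstruction to a single connected $C\langle T_j\rangle$, is the technical heart of the proof.
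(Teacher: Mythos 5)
Your lower bound argument is essentially the paper's: closures of complete $c$-ary trees of depth $\ctd(H)-1$ have tree-depth $\ctd(H)-1$, hence lie in $\MM_H$, and are not $(\ctd(H)-2)$-colourable with clustering $c$; minor-monotonicity of tree-depth closes the loop. (The paper attributes the observation to Ossona~de~Mendez, Oum and Wood rather than the EKKOS $K_t$ bound, but the witnesses are the same.)

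The upper bound, however, has a genuine gap at exactly the point you flag as the ``technical heart,'' and it is not a detail that can be deferred. Knowing only that $G[L_i]$ has no $(C\langle T\rangle-r)$-minor does \emph{not} give a bounded-size apex set $A_i$: the Erd\H{o}s--P\'osa property for minors fails for non-planar patterns (Robertson--Seymour: a connected graph has the Erd\H{o}s--P\'osa property for minors if and only if it is planar), and $\Closure{h,k}$ is non-planar once $h\geq 4$. The paper resolves this by layering in two additional tools you do not invoke: (i) a structure theorem of DeVos, Ding, Oporowski, Reed, Sanders, Seymour and Vertigan, that $H$-minor-free graphs admit a vertex $2$-colouring with both colour classes inducing graphs of bounded treewidth, and (ii) the Erd\H{o}s--P\'osa theorem \emph{for graphs of bounded treewidth} (Robertson--Seymour, Graph Minors V), which yields a hitting set of size at most $p\cdot w\cdot c$. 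With these, the factor of~$2$ in $2^{\ctd(H)+1}-4$ comes from the treewidth partition (not from spending two colours on the apex), the per-layer apex set needs only one new colour and has explicitly bounded size, and so the clustering is controlled. Without reducing to bounded treewidth first, there is no mechanism in your plan to bound $|A_i|$ or to colour it with bounded clustering, so the induction does not close. (The paper's alternate, self-contained second proof avoids both external theorems but replaces them with a nontrivial analysis of weak closures---\cref{WeakStrong}, \cref{Split}, \cref{ColourNoWeakClosure}---and lands on the weaker bound $\frac16(4^{\ctd(H)}-4)$; nothing of that flavour appears in your sketch either.)
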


The upper bound in \cref{Main} gives evidence for, and was inspired by, a conjecture of \citet*{OOW}, which we now introduce. A graph $G$ is \emph{$k$-colourable with defect} $d$ if each vertex of $G$ can be assigned one of $k$ colours so that each vertex is adjacent to at most $d$ neighbours of the same colour; that is, each monochromatic component has maximum degree at most $d$. The \emph{defective chromatic number} of a graph class $\GG$, denoted by $\dchi(\GG)$, is the minimum integer $k$ such that, for some integer $d$, every graph in  $\GG$ is $k$-colourable with defect $d$. Every colouring of a graph with clustering $c$ has defect $c-1$. Thus the defective chromatic number of a graph class is at most its clustered chromatic number. \citet{OOW} conjectured the following behaviour for the defective chromatic number of $\MM_H$.

\begin{conj}[\citep{OOW}] 
\label{tdH}
For every graph $H$, 
$$\dchi(\MM_H)=\ctd(H)-1.$$ 
\end{conj}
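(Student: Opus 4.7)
The plan is to attack the conjecture in two pieces. The lower bound $\dchi(\MM_H)\geq\ctd(H)-1$ comes from the same construction used to establish $\cchi(\MM_H)\geq\ctd(H)-1$ in \cref{Main}. Specifically, for any $t\geq 1$ and any $d\geq 1$, I would construct by induction on $t$ a graph $G_{t,d}$ of connected tree-depth exactly $t$ that is not $(t-1)$-colourable with defect $d$: start with $G_{1,d}=K_{d+2}$, and build $G_{t,d}$ by taking many disjoint copies of $G_{t-1,d'}$ for suitably large $d'$ and joining all of them to a common apex vertex. A pigeonhole argument on the apex vertex's colour forces a monochromatic bad copy below. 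The graph $G_{t,d}$ avoids $H$ as a minor whenever $\ctd(H)>t$, so taking $t=\ctd(H)-1$ yields the bound. The point is that this construction does not distinguish between bounded defect and bounded clustering, so it applies verbatim to $\dchi$.

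For the upper bound $\dchi(\MM_H)\leq\ctd(H)-1$ I would argue by induction on $t=\ctd(H)$. Fix a rooted tree $T$ of depth $t$ whose closure contains $H$, with root $r$, and let $H_1,\ldots,H_k$ be the components of $H-r$, each of connected tree-depth at most $t-1$. The inductive hypothesis supplies, for each $i$, a defect $d_i$ such that every $H_i$-minor-free graph is $(t-2)$-colourable with defect $d_i$. Given an $H$-minor-free graph $G$, the aim is to choose a set $S\subseteq V(G)$ to receive the $(t-1)$st colour so that $G[S]$ has bounded maximum degree, and simultaneously $G-S$ is free of some graph $H'$ whose components all have connected tree-depth at most $t-1$, at which point induction finishes the argument. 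A natural source of $S$ is the structure theorem for graphs with no $H$-minor, used to find a low-degree layering or small separator that peels off the candidate images of $r$. One then feeds $G-S$ into the inductive hypothesis on $H-r$ with a defect bound inflated to account for interactions across the boundary between $S$ and $V(G)\setminus S$.

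The main obstacle, and the reason this remains a conjecture, is controlling colour reuse across the inductive step. The proof of \cref{Main} can afford to use a fresh block of colours at every level of the induction because clustering bounds propagate poorly under contraction; this is precisely what blows the upper bound up to $2^{\ctd(H)+1}-4$. For the defective version the colours must be reused between levels, and this forces a much tighter structural decomposition of $H$-minor-free graphs than is currently available: one would need something like a layered tree-depth decomposition whose slices can be coloured by $t-2$ colours with bounded defect even after being glued to the chosen set $S$. Establishing such a decomposition, and in particular ruling out cascading high-defect configurations when the same colour is re-used at incompatible levels of the tree $T$, is the technical heart of \cref{tdH} and the step I expect to be the main barrier.
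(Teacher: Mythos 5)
This statement is \cref{tdH}, an \emph{open conjecture} due to \citet{OOW}; the paper does not prove it. What is known, and what the paper states, is only the lower bound $\dchi(\MM_H)\geq\ctd(H)-1$ (also due to \citet{OOW}), together with the upper bound in a few special cases: $H=K_t$, $H$ complete bipartite, and $\ctd(H)\leq 3$. Your writeup correctly recognises this split, so there is no proof in the paper to compare against.

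Your lower-bound sketch is essentially the argument the paper cites: the apex-over-many-disjoint-copies construction is just the closure of a complete $c$-ary tree of the appropriate depth, and a pigeonhole argument on the apex colour pushes a forbidden configuration one level down. One concrete slip: $K_{d+2}$ has connected tree-depth $d+2$, not $1$, so it cannot serve as $G_{1,d}$ if you want connected tree-depth exactly $t$ at level $t$. The base case should be a single vertex (trivially not $0$-colourable), or equivalently start the induction at depth $2$ with a large star $K_{1,c}$, which has connected tree-depth $2$ and defeats any $1$-colouring with defect $d$ once $c>d$. With that fixed, $G_{t,d}$ is \Closure{t,c} for suitable $c=c(t,d)$, it has connected tree-depth $t$ and hence no minor of connected tree-depth exceeding $t$, and taking $t=\ctd(H)-1$ gives an $H$-minor-free witness excluding the $(\ctd(H)-2)$-colouring; this is the lower bound.

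Your description of the obstruction in the upper bound is a fair informal account of why the conjecture is open, and it is consistent with the exponential loss in \cref{Main} (where fresh colours are spent at every BFS level and a further factor of two comes from the two-piece treewidth partition of \cref{DeVos}). But the second and third paragraphs are a research plan rather than a proof: the ``layered tree-depth decomposition'' whose slices tolerate colour reuse with bounded defect is not constructed, and producing it is exactly the open problem. Be careful not to present this as more than a sketch of the lower bound plus a statement of the difficulty.
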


\citet{OOW} proved the lower bound, $\dchi(\MM_H) \geq \ctd(H)-1$, in \cref{tdH}. This follows from the observation that the closure of the rooted complete $c$-ary tree of depth $k$ is not $(k-1)$-colourable with clustering $c$. The lower bound in \cref{Main} follows since $\dchi\leq \cchi$ for every class. The upper bound in \cref{tdH} is known to hold in some special cases. \citet{EKKOS15} proved it if $H=K_t$; that is, $\dchi(\MM_{K_t})=t-1$, which can be thought of as a defective version of Hadwiger's Conjecture. \citet{OOW} proved the upper bound in \cref{tdH} if $\ctd(H)\leq 3$ or if $H$ is a complete bipartite graph. In particular, $\dchi(\MM_{K_{s,t}})=\min\{s,t\}$. 

\cref{Main} provides some evidence for \cref{tdH} by showing that $\dchi(\MM_H)$ and $\cchi(\MM_H)$ are bounded from above by some function of $\ctd(H)$. This was previously not known to be true.

While it is conjectured that $\dchi(\MM_H)= \ctd(H)-1$, the following lower bound, proved in \cref{LowerBoundSection}, shows that $\cchi(\MM_H)$ might be larger, thus providing some distinction between defective and clustered colourings. 

\begin{thm} 
\label{NewLowerBound}
For each $k\geq 2$, there is a graph $H_k$ with $\ctd(H_k)=\td(H_k)=k$ such that 
$$\cchi(\MM_{H_k}) \geq 2k-2.$$
\end{thm}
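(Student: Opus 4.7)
My plan is to exhibit, for each $k \geq 2$, an explicit graph $H_k$ of connected tree-depth exactly $k$ together with an infinite family $\{G^{(k)}_c\}_{c \geq 1}$ of $H_k$-minor-free graphs that admit no $(2k-3)$-colouring with clustering $c$. For the base case $k=2$, take $H_2 = K_{1,3}$: the path on $c+2$ vertices is $K_{1,3}$-minor-free (having maximum degree $2$) and is not $1$-colourable with clustering $c$, which yields $\cchi(\MM_{H_2}) \geq 2$. Note $\ctd(K_{1,3}) = 2$, matching the required tree-depth.

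For the inductive step, I would build $H_k$ from $H_{k-1}$ by adding a single ``apex'' layer so that $\ctd(H_k) = \ctd(H_{k-1}) + 1 = k$, and symmetrically build $G^{(k)}_c$ from several parallel copies of $G^{(k-1)}_c$ arranged around an extra apex layer indexed by a long odd cycle. The design principle for the apex layer is that in any $(2k-3)$-colouring of $G^{(k)}_c$ with clustering $c$, one of two bad events must occur: (i) some long arc of the apex odd cycle uses at most one colour, producing a monochromatic component of size larger than $c$; or (ii) at least two of the $2k-3$ colours are ``spent'' on the apex layer, leaving at most $2k-5$ colours available on some copy of $G^{(k-1)}_c$ with clustering $c$, contradicting the inductive hypothesis. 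The odd-cycle apex is what forces two new colours rather than just one.

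The main verification tasks are then standard: produce an elimination tree witnessing $\ctd(H_k) = \td(H_k) = k$; show $G^{(k)}_c$ is $H_k$-minor-free by a case analysis of how a hypothetical minor model of $H_k$ could intersect the apex layer, contracting it away to produce a model of $H_{k-1}$ in a single copy of $G^{(k-1)}_c$; and run the two-alternative colouring argument above by induction on $k$.

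The main obstacle is striking the right balance between the two halves of the construction: the minor-closed class $\MM_{H_k}$ must be broad enough to contain the rich product-like family $G^{(k)}_c$, yet $H_k$ itself must have tree-depth only $k$. In particular, the apex layer must behave like a blown-up odd cycle for clustered-colouring purposes (costing two colours beyond its proper chromatic number) while contributing only a single level to the tree-depth of $H_k$. Arranging this with enough uniformity that the induction goes through for all $k$ simultaneously---so that the gap of roughly $k-1$ between $\dchi(\MM_{H_k}) = \ctd(H_k)-1$ (from \cref{tdH}) and $\cchi(\MM_{H_k}) \geq 2k-2$ is truly witnessed---is the technical heart of the proof.
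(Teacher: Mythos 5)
Your overall architecture---pick a specific $H_k$ with $\ctd(H_k)=k$ and a family of $H_k$-minor-free graphs built recursively around an apex layer containing many parallel copies of the previous family, with the base case being a long path excluded by $K_{1,3}$---matches the paper's proof (\cref{TernaryTree}) at the level of strategy, and your base case is correct up to an irrelevant offset. But three details diverge from what is actually needed, and the first of them is a genuine misconception.

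First, the odd cycle is both unnecessary and misdiagnosed. You write that the apex must cost ``two colours beyond its proper chromatic number'' and that the odd-cycle apex is what forces two new colours. This confuses proper colouring with clustered colouring. In a colouring with clustering $c$, a \emph{path} of $c+1$ vertices already forces two colours: if all its vertices shared a colour they would form a monochromatic component of size $c+1 > c$. So some edge $v_i v_{i+1}$ of the apex path is bichromatic, which is all that is needed. The paper uses precisely a path $(v_1,\dots,v_{c+1})$, and replacing it by an odd cycle buys nothing while complicating the minor-freeness analysis.

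Second, your alternative (ii) --- that two colours are ``spent'' on the apex, ``leaving at most $2k-5$ colours available'' on some copy --- elides the key quantitative step. Spending two colours on $v_i,v_{i+1}$ does not make them unavailable downstream, since a copy of $G^{(k-1)}_c$ could reuse those colours internally. What actually works is a pigeonhole argument: the paper attaches $2c-1$ pairwise disjoint copies of $G_{k-1}$ \emph{complete} to $\{v_i,v_{i+1}\}$. With clustering $c$, at most $c-1$ of those copies can contain a vertex coloured like $v_i$, and at most $c-1$ like $v_{i+1}$; so at least one copy avoids both colours entirely and hence carries a $(2k-5)$-colouring with clustering $c$, contradicting the inductive hypothesis. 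Your sketch needs this step spelled out, and the ``complete to $\{v_i,v_{i+1}\}$'' adjacency is what makes it work.

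Third, the minor-freeness argument constrains the choice of $H_k$ more tightly than ``add a single apex layer.'' The paper takes $H_k := \Closure{k,3}$, the closure of the complete \emph{ternary} tree of depth $k$, and in the minor-freeness proof it chooses a minimal model so that the root bag $J_r$ is a subpath $(v_i,\dots,v_j)$ of the apex path, then observes that $H_k - r$ consists of \emph{three} disjoint copies of $\Closure{k-1,3}$; with only the two boundary vertices $v_{i-1},v_{j+1}$ to dodge, one of the three subtree models must avoid both, trapping it inside a single copy of $G_{k-1}$ and giving a $\Closure{k-1,3}$ minor there, contradiction. If $H_k$ were $H_{k-1}$ plus a single dominant vertex (branching factor 1), $H_k - r$ would be connected and this argument collapses. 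So the branching factor $\geq 3$ of $H_k$ is structurally essential, not an incidental detail, and the proposal as stated leaves $H_k$ underspecified in exactly the place where it matters.
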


We conjecture an analogous upper bound:

\begin{conj}
\label{tdConjecture}
For every graph $H$,  $$\cchi(\MM_H)\leq 2\ctd(H)-2.$$ 
\end{conj}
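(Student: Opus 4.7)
The plan is to prove Conjecture~\ref{tdConjecture} by induction on $t := \ctd(H)$, preceded by a structural reduction. If $H$ is a subgraph of $H'$, then every $H'$-minor contains an $H$-minor, so $\MM_H \subseteq \MM_{H'}$ and hence $\cchi(\MM_H) \leq \cchi(\MM_{H'})$. Since every graph of connected tree-depth $t$ is a subgraph of the closure $\Closure{T}$ of some rooted tree $T$ of depth $t$, and $\ctd(\Closure{T}) = t$, it suffices to prove the bound when $H = \Closure{T}$. Writing $r$ for the root of $T$ and $T_1, \dots, T_d$ for the subtrees rooted at the children of $r$, the graph $\Closure{T}$ is the join of the apex vertex $r$ with the disjoint union $\Closure{T_1} \sqcup \cdots \sqcup \Closure{T_d}$, and each $T_j$ has depth at most $t-1$. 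This recursive structure is exactly what the induction should exploit.

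For the inductive step, the target is to find, in any $\Closure{T}$-minor-free graph $G$, a vertex set $S \subseteq V(G)$ with two properties: (i)~$G[S]$ admits a $2$-colouring with clustering bounded by a function of $|V(H)|$; and (ii)~for each component $C$ of $G-S$ there exists an index $j = j(C)$ such that $C$ excludes $\Closure{T_j}$ as a minor. Induction then provides a $(2t-4)$-colouring of $G-S$ with bounded clustering, which combines with two fresh colours on $S$ to give the desired $(2t-2)$-colouring of $G$ with bounded clustering. When $T$ is a path, $\Closure{T} = K_t$ and the problem specialises to $\cchi(\MM_{K_t}) \leq 2t-2$, so any candidate for $S$ should generalise the separator structure used in \cite{Norin15,DN17}.

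The principal obstacle is producing the set $S$. The natural candidate is a BFS-style layer (or pair of consecutive layers), possibly augmented by a bounded apex set coming from the Graph Minor Structure Theorem applied to $\MM_H$; property~(i) is then comparatively tame, since the induced structure on a short range of BFS layers is controlled. Property~(ii) is the crux: one must argue that if some component $C$ of $G-S$ contained a $\Closure{T_j}$-minor for \emph{every} $j \in \{1,\dots,d\}$, then these minors could be stitched together through a single apex vertex of $S$ to produce a $\Closure{T}$-minor in $G$, contradicting the hypothesis. Making this precise appears to require a refinement of graph-minor structure sensitive to tree-depth rather than to Hadwiger number alone, describing how apex vertices see the different sides of a separator. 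A secondary challenge is bookkeeping the clustering bound, which will blow up (most likely as a tower-type function of $t$) but remain finite, which is all the conjecture requires.
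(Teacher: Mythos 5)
The statement you have been asked to prove is \cref{tdConjecture}, which is an \emph{open conjecture} in the paper, not a theorem. The paper does not contain a proof of it: the authors prove only the weaker exponential upper bound $\cchi(\MM_H)\leq 2^{\ctd(H)+1}-4$ (\cref{Main}), plus the matching lower bound (\cref{NewLowerBound}), and they establish the conjectured linear bound only for $\ctd(H)\leq 3$ (via \cref{FatStar} and \cref{td3}). So there is no ``paper's own proof'' against which your attempt could be checked.

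Your proposal is an outline, not a proof, and you yourself identify the critical gap, so let me reinforce why it is fatal as stated. You want a set $S$ that is 2-colourable with bounded clustering and such that each component of $G-S$ excludes some $\Closure{T_j}$; you then argue that otherwise one could ``stitch'' $\Closure{T_j}$-minors, one for each $j$, through a single apex of $S$. Two things break down. First, having a $\Closure{T_j}$-minor in $C$ for every $j$ does not give \emph{pairwise disjoint} models, and without disjointness there is nothing to stitch: the paper's own argument (\cref{Heart}) gets around this precisely by asking for $k$ disjoint copies and invoking Erd\H{o}s--P\'osa (\cref{ErdosPosa}), which costs a deletion set $X_i$ of size $kw$ and hence \emph{an extra colour per recursion level}, plus the treewidth hypothesis that makes Erd\H{o}s--P\'osa applicable. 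The recurrence $f(h)=2f(h-1)+2$ that results is exactly why the paper obtains an exponential, not linear, bound. Second, even with disjoint models, one needs a single vertex of $S$ adjacent (after contraction) to all of them that can serve as the root of $T$; a BFS layer gives a vertex dominating everything \emph{below}, but then one is forced to parity-split the layers, and to control the part inside each layer one needs the full recursion again. Closing either gap would already improve the main theorem of the paper, and closing both to get $2\ctd(H)-2$ is precisely the open problem. Your high-level reduction to $H=\Closure{T}$ and the recursive shape of the closure is correct and is the same starting point the authors use, but the decisive structural lemma in your step (ii) is the conjecture in disguise rather than an available tool.
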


A further contribution of the paper is to precisely determine the minor-closed graph classes with clustered chromatic number 2. This result is introduced and proved in \cref{2Colouring}. \cref{FatStarSection} studies clustered colourings of graph classes excluding so-called fat stars as a minor. This leads to a proof of 
\cref{tdConjecture} in the $\ctd(H)=3$ case. We conclude in \cref{Conjecture} with a conjecture about the clustered chromatic number of an arbitrary minor-closed class that generalises \cref{tdConjecture}.

\section{Tree-depth Bounds}
\label{TreeDepthBounds}

The main goal of this section is to prove that $\cchi(\MM_H)$ is bounded from above by some function of $\ctd(H)$. We actually provide two proofs. The first proof depends on deep results from graph structure theory and gives no explicit bound on the clustering. The second proof is self-contained, but gives a worse upper bound on the number of colours. Both proofs have their own merits, so we include both. 

Let \Closure{h,k} be the closure of the rooted complete $k$-ary tree of depth $h$. (Here each non-leaf node has exactly $k$ children.)\ 

If $r$ is a vertex in a connected graph $G$ and $V_i:=\{v\in V(G):\dist_G(v,r)=i\}$ for $i\geq 0$, then $V_0,V_1,\dots$ is called the \emph{BFS layering} of $G$ starting at $r$. 

\subsection{First Proof}

The first proof depends on the following Erd\H{o}s-P\'osa Theorem by \citet{RS-V}. For a graph $H$ and integer $p\geq 1$, let $p\,H$ be the disjoint union of $p$ copies of $H$. 

\newcommand{\blah}{\protect ; see \citep[Lemma~3.10]{RT17}}

\begin{thm}[\citep{RS-V}\blah]
\label{ErdosPosa}
For every non-empty graph $H$ with $c$ connected components and for all integers $p,w\geq 1$, 
for every graph $G$ with treewidth at most $w$ and containing no $p\, H$ minor,  
there is a set $X\subseteq V(G)$ of size at most $pwc$ such that $G - X$ has no $H$ minor. 
\end{thm}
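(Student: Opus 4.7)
The plan is to induct on $p$. The base case $p = 1$ is immediate: by hypothesis $G$ has no $pH = H$-minor, so $X = \emptyset$ satisfies $|X| = 0 \leq wc$.

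For the inductive step, assume the result for $p - 1$. If $G$ has no $H$-minor, take $X = \emptyset$. Otherwise, fix any $H$-minor $M$ of $G$ with connected components $M_1, \ldots, M_c$, where $M_i$ is a connected subgraph of $G$ modelling the $i$-th connected component $H_i$ of $H$. Fix a tree decomposition $(T, (B_t)_{t \in V(T)})$ of $G$ of width at most $w$, rooted at a node $r$. Since $M_i$ is connected, the set $T_i := \{t \in V(T) : B_t \cap V(M_i) \neq \emptyset\}$ induces a subtree of $T$; let $t_i$ be the node of $T_i$ closest to $r$. Set
\[
X_0 \;:=\; \bigcup_{i=1}^{c} \bigl(B_{t_i} \setminus V(M_i)\bigr).
\]
Because $|B_{t_i}| \leq w+1$ and $B_{t_i} \cap V(M_i) \neq \emptyset$, each summand has size at most $w$, so $|X_0| \leq wc$.

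The key claim is that $M$ may be chosen so that $G - X_0$ contains no $(p-1)H$-minor. Granting this, $G - X_0$ has treewidth at most $w$ and no $(p-1)H$-minor, so the inductive hypothesis yields $X' \subseteq V(G - X_0)$ with $|X'| \leq (p-1)wc$ and $G - X_0 - X'$ is $H$-minor-free. Then $X := X_0 \cup X'$ has size at most $pwc$ and $G - X$ is $H$-minor-free, completing the induction.

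The main obstacle is the key claim. Suppose for contradiction that $N$ is a $(p-1)H$-minor in $G - X_0$. By construction $V(M) \cap X_0 = \emptyset$, so $M$ survives in $G - X_0$, and $M \cup N$ realises a $pH$-minor in $G$---contradicting the hypothesis---provided $V(M) \cap V(N) = \emptyset$. The subtle case is therefore $V(M) \cap V(N) \neq \emptyset$: a branch set of $N$ may enter some $V(M_i)$ only through $B_{t_i} \cap V(M_i)$, the single surviving gateway in $G - X_0$ between the subtree of $T$ rooted at $t_i$ and the rest of $T$. To resolve this I would choose $M$ extremally---for instance to minimise $\sum_{i=1}^{c} \dist_T(r, t_i)$---and apply an exchange argument: an intersection of $N$ with $V(M)$ should let one splice the offending branch set of $N$ into $M$ to produce an alternative $H$-minor $M'$ of $G$ whose top bags lie strictly closer to $r$, contradicting extremality. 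Making this rerouting precise---verifying that the spliced branch sets remain connected and pairwise disjoint and still model the correct components of $H$---is the technical crux; granted it, the induction closes cleanly.
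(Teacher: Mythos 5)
The paper does not prove this statement; it cites it directly to Robertson--Seymour (Graph Minors V) with a pointer to Lemma~3.10 of the Raymond--Thilikos survey, so there is no ``paper's own proof'' to compare against. Evaluating your sketch on its own merits: the setup (induction on $p$, root the tree decomposition, for each component $H_i$ take the topmost bag $B_{t_i}$ meeting $V(M_i)$, and put $X_0 := \bigcup_i (B_{t_i}\setminus V(M_i))$ of size at most $wc$) is sound, but the ``key claim'' that $M$ may be chosen so that $G-X_0$ has no $(p-1)H$-minor is false, and no extremal or rerouting refinement can rescue it. Take $H=K_2$ (so $c=1$), $p=2$, $w=1$, and $G$ the path $u_1\,\text{--}\,v\,\text{--}\,u_2$, which has no $2K_2$-minor. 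The only width-$1$ tree decomposition has bags $\{u_1,v\}$ and $\{v,u_2\}$. For either choice of $M$ (either edge) and either rooting, $X_0$ is empty or a single leaf $u_j$, and $G-X_0$ retains at least one edge, i.e.\ a $(p-1)H = K_2$-minor. In one of these cases the top bag of $M$ already equals the root, so ``push the top bags strictly closer to $r$'' has nowhere to go; your exchange argument cannot terminate.

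The structural reason the claim fails is that you retain $B_{t_i}\cap V(M_i)$, which is a nonempty connected gateway through the separator $B_{t_i}$, so deleting $X_0$ does not actually disconnect the part of $G$ below $t_i$ from the rest; further $H$-minors can freely reuse that gateway (and in the example above deleting $X_0$ removes essentially nothing). The standard argument is a genuinely different iteration: one processes the rooted tree decomposition from the leaves upward, repeatedly locating a deepest node $t$ for which the graph induced on the bags below $t$ (with previously chosen vertices deleted) still contains a model of some $H_i$, deletes the entire bag $B_t$, and shows that this step can be performed fewer than $p$ times per component, because otherwise the models certified at those deepest nodes are pairwise disjoint and assemble into a $pH$-minor. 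That kind of ``deepest violator plus whole-bag deletion'' scheme, rather than a one-shot removal of $X_0$ followed by one application of the inductive hypothesis, is what makes the counting close.
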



The next lemma is the heart of our proof. 

\begin{lem}
\label{Heart}
For all integers $h,k,w\geq 1$, 
every \Closure{h,k}-minor-free graph $G$ of treewidth at most $w$ is $(2^h-2)$-colourable with clustering $kw$. 
\end{lem}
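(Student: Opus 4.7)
The plan is induction on $h$. The base case $h=1$ is immediate: $\Closure{1,k}=K_1$, so $G$ is empty and the required $2^1-2=0$ colours suffice.

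For the inductive step I may assume $G$ is connected (colour its components independently with a shared palette). Fix $r\in V(G)$ and let $V_0=\{r\},V_1,V_2,\dots$ be the BFS layering starting at $r$, together with a BFS tree in which every $v\in V_i$ (for $i\geq 1$) has a unique parent $p(v)\in V_{i-1}$. The crux is the following claim: for every $i\geq 1$, the induced subgraph $G[V_i]$ contains no $k\cdot\Closure{h-1,k}$ minor. If instead $M_1,\dots,M_k$ were vertex-disjoint $\Closure{h-1,k}$ minors inside $G[V_i]$, then, with $U:=V(M_1)\cup\dots\cup V(M_k)$, I would form
$$B_0\;:=\;\bigcup_{v\in U}\{r,p(v),p^2(v),\dots,p^{i-1}(v)\},$$
the union of BFS-tree paths from $r$ to the parents of the vertices of $U$. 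Then $B_0$ is connected (all these paths meet at $r$), is contained in $V_0\cup\cdots\cup V_{i-1}$ and hence disjoint from every $M_j$, and for each $v\in U$ the parent $p(v)\in B_0$ is adjacent to $v$ in $G$, so $B_0$ is adjacent to every branch set of every $M_j$. Since in $\Closure{h,k}$ the root is adjacent to every other vertex while no edges run between the $k$ subtrees rooted at its children, $B_0$ together with the branch sets of $M_1,\dots,M_k$ provides a $\Closure{h,k}$ minor of $G$, contradicting the hypothesis.

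Given the claim, I would apply \cref{ErdosPosa} to each $G[V_i]$ (which has treewidth at most $w$) with $H=\Closure{h-1,k}$ (connected, so $c=1$) and $p=k$, producing $X_i\subseteq V_i$ with $|X_i|\leq kw$ such that $G[V_i\setminus X_i]$ is $\Closure{h-1,k}$-minor-free; the induction hypothesis then gives a $(2^{h-1}-2)$-colouring of $G[V_i\setminus X_i]$ with clustering $kw$. (For $i=0$ one may simply take $X_0=V_0=\{r\}$.)

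To assemble the final colouring I would use two disjoint palettes $P_0,P_1$ of $2^{h-1}-1$ colours each, totalling $2^h-2$ colours. On layer $V_i$, draw from $P_{i\bmod 2}$: colour $G[V_i\setminus X_i]$ using $2^{h-1}-2$ of those colours via induction, and assign the one remaining colour of $P_{i\bmod 2}$ to all of $X_i$. Because layers of the same parity are pairwise non-adjacent in $G$ and the two palettes are disjoint, every monochromatic component lies inside a single $V_i$, and there it is either a monochromatic component of $G[V_i\setminus X_i]$ of size at most $kw$ or a subset of $X_i$ of size at most $kw$. The main obstacle is the claim: one has to convert $k$ disjoint $\Closure{h-1,k}$ minors inside one BFS layer into a single $\Closure{h,k}$ minor of $G$, and the key point is that the BFS parent paths deliver a single connected set strictly below the layer that meets every branch set through a parent edge, exactly the role required for the branch set of the new root.
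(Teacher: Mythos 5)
Your proof is correct and follows essentially the same route as the paper's: a BFS layering from a root $r$, the observation that $G[V_i]$ has no $k\cdot\Closure{h-1,k}$ minor (the paper simply contracts all of $V_0\cup\dots\cup V_{i-1}$ to one vertex, whereas you more explicitly extract a connected union of BFS parent paths, which is a harmless refinement), an application of \cref{ErdosPosa} with $p=k$, $c=1$ to get a set $X_i$ of size at most $kw$, induction on $G[V_i\setminus X_i]$, an extra colour for $X_i$, and two disjoint palettes for even and odd layers.
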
 

\pf{
We proceed by induction on $h\geq 1$, with $w$ and $k$ fixed. The case $h=1$ is trivial since $\Closure{1,k}$ is the 1-vertex graph, so only the empty graph has no $\Closure{1,k}$ minor, and the empty graph is 0-colourable with clustering 0. Now assume that $h\geq 2$, the claim holds for $h-1$, and $G$ is a \Closure{h,k}-minor-free graph with treewidth at most $w$. Let $V_0,V_1,\dots$ be the BFS layering of $G$ starting at some vertex $r$. 

Fix $i\geq 1$. Then $G[V_i]$ contains no $ k\,\Closure{h-1,k}$ as a minor, as otherwise contracting $V_0\cup\dots\cup V_{i-1}$ to a single vertex gives a \Closure{h,k} minor (since every vertex in $V_i$ has a neighbour in $V_{i-1}$). Since $G$ has treewidth at most $w$, so does $G[V_i]$. By \cref{ErdosPosa} with $H=\Closure{h-1,k}$ and $c=1$, there is a set $X_i\subseteq V_i$ of size at most $kw$, such that $G[V_i\setminus X_i]$ has no $\Closure{h-1,k}$ minor. By induction, $G[V_i\setminus X_i]$ is $(2^{h-1}-2)$-colourable with clustering  $kw$. 
Use one new colour for $X_i$. Thus $G[V_i]$ is $(2^{h-1}-1)$-colourable with clustering  $kw$. 

Use disjoint sets of colours for even and odd $i$, and colour $r$ by one of the colours used for even $i$. No edge joins $V_i$ with $V_j$ for $j\geq i+2$. Thus $G$ is $(2^{h}-2)$-coloured with clustering  $kw$. 
}

To drop the assumption of bounded treewidth, we use the following result of \citet*{DDOSRSV04}, the proof of which depends on the graph minor structure theorem. 

\begin{thm}[\citep{DDOSRSV04}]
\label{DeVos}
For every graph $H$ there is an integer $w$ such that for every graph $G$ containing no $H$-minor, there is a partition $V_1,V_2$ of $V(G)$ such that $G[V_i]$ has treewidth at most $w$, for $i\in\{1,2\}$. 
\end{thm}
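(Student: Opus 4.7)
The plan is to invoke the Graph Minor Structure Theorem of Robertson and Seymour, which asserts that every $H$-minor-free graph $G$ admits a tree decomposition whose torsos are each obtained from a graph nearly-embedded in a surface $\Sigma$ of bounded Euler genus $g=g(H)$ by adjoining at most $a=a(H)$ apex vertices and at most $a$ vortices of depth at most $a$. So my first task is to reduce to the case of a single torso, and then further to the underlying surface-embedded graph, handling the apices and vortices as perturbations.

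For a graph $G_0$ genuinely 2-cell embedded in $\Sigma$, I would take a BFS layering $V_0,V_1,\ldots$ from a root vertex (one per component). Fix a parameter $k$ depending only on $g$ and $a$, and group consecutive layers into bands
$$B_j \;:=\; V_{jk}\cup V_{jk+1}\cup\cdots\cup V_{(j+1)k-1}.$$
Assign $B_j$ to $V_1$ when $j$ is even and to $V_2$ when $j$ is odd. Because no edge joins non-consecutive BFS layers, $G_0[V_i]$ is the disjoint union of the $G_0[B_j]$ with $j$ of the appropriate parity. Each $G_0[B_j]$ has radius at most $k$ and embeds in $\Sigma$; by Eppstein's surface analogue of the Robertson--Seymour planar bounded-radius theorem, its treewidth is bounded by some function of $g$ and $k$. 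Hence $G_0[V_1]$ and $G_0[V_2]$ have bounded treewidth.

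Next I would absorb the apex vertices and vortices of the torso. The $\leq a$ apices of each torso can all be dumped into $V_1$, increasing its treewidth by at most $a$. Each vortex has path-width at most $a$ and is attached cyclically along a face; by choosing $k$ sufficiently large compared to $a$, each vortex fits inside a single band and so contributes only an additive constant to the treewidth of that band. To extend from a single torso to the full tree decomposition, I would proceed by induction along the decomposition tree: at each adhesion set (of bounded size), reconcile the two local partitions of the parent and child torsos by forcibly moving the bounded adhesion set to one side; since the adhesion is bounded, this changes the treewidth of each part by only a bounded additive constant.

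The main obstacle is coordinating the BFS-band partition with the cyclic face structure into which a vortex is attached: one must ensure the vortex does not straddle two bands of opposite parity, which forces a careful choice of root and band-width and a delicate case analysis when the vortex face meets several BFS layers. A secondary technical difficulty is the inductive gluing along the tree decomposition, where consistency of the partition across many shared adhesion sets forces $w$ to depend super-polynomially on the structural parameters $g$ and $a$, and hence on $|V(H)|$. The resulting constant $w=w(H)$ is enormous, which is precisely why the statement of \cref{Heart} is phrased with an explicit (though much larger $k$-ary) treewidth parameter rather than chasing the bound from \cref{DeVos}.
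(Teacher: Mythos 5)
The paper does not prove this statement; it is quoted verbatim from DeVos, Ding, Oporowski, Sanders, Reed, Seymour and Vertigan \citep{DDOSRSV04}, so there is no ``paper's own proof'' to compare against. What you have written is a reconstruction sketch of the argument in that cited paper, and your high-level plan (Graph Minor Structure Theorem; BFS bands alternating between two colour classes on the surface part; treat apices, vortices and clique-sums as perturbations) is indeed recognisably the strategy used there.

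That said, the two things you flag as ``obstacles'' are not cosmetic technicalities but are precisely where your sketch, as written, would fail. First, the vortex step: you assert that ``by choosing $k$ sufficiently large compared to $a$, each vortex fits inside a single band.'' This is not true. The vortex society is threaded along an entire face boundary of the embedded graph, and that boundary walk can wander through arbitrarily many BFS layers no matter which root you choose or how wide you make the bands; there is no choice of $k=k(g,a)$ that confines an arbitrary vortex to one band. The actual treatment requires cutting the problem differently (using the bounded adhesion of the vortex to the surface part and its bounded depth together, not a containment in a band of BFS layers). Second, the gluing along the tree decomposition: ``forcibly moving the bounded adhesion set to one side'' is not a single local repair. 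To recurse into a child torso you must produce a 2-partition of that torso that \emph{agrees with a prescribed 2-colouring of its adhesion set}, which is a strictly stronger statement than the one you are inducting on; you would need to strengthen the inductive hypothesis to allow an arbitrary prescribed colouring on a bounded set, and then check that this strengthening survives the surface-plus-apex-plus-vortex analysis. Neither fix is hard to state, but both require real work, and without them the sketch does not yet compile into a proof.
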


\cref{Heart,DeVos} imply:

\begin{lem}
\label{HeartHeart}
For all integers $h,k\geq 1$, there is an integer $g(h,k)$, such that every \Closure{h,k}-minor-free graph $G$ is $(2^{h+1}-4)$-colourable with clustering at most $g(h,k)$.
\end{lem}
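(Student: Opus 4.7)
The proof should be a straightforward combination of \cref{Heart} and \cref{DeVos}, and I would write it as follows.

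Given $h, k \geq 1$, apply \cref{DeVos} with $H := \Closure{h,k}$ to obtain an integer $w = w(h,k)$ with the property that every $\Closure{h,k}$-minor-free graph admits a vertex partition into two parts, each inducing a subgraph of treewidth at most $w$. Set $g(h,k) := kw$.

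Now let $G$ be an arbitrary \Closure{h,k}-minor-free graph, and let $V_1, V_2$ be the partition guaranteed by \cref{DeVos}. For each $i \in \{1,2\}$, the induced subgraph $G[V_i]$ is a subgraph of $G$ and hence is also \Closure{h,k}-minor-free, and it has treewidth at most $w$. By \cref{Heart}, $G[V_i]$ is $(2^h-2)$-colourable with clustering $kw$.

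Using two disjoint palettes of $2^h-2$ colours on $V_1$ and $V_2$ respectively, we obtain a colouring of $G$ with a total of $2(2^h-2) = 2^{h+1}-4$ colours. Since no monochromatic component can cross the partition (the two palettes are disjoint), every monochromatic component is contained in either $G[V_1]$ or $G[V_2]$, and so has at most $kw = g(h,k)$ vertices. There is no real obstacle here: \cref{DeVos} does the structural work of removing the treewidth hypothesis from \cref{Heart}, and the disjoint-palette trick costs a factor of two in the number of colours, which is exactly how $2^{h}-2$ becomes $2^{h+1}-4$.
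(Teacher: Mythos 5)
Your proof is correct and is precisely the argument the paper intends: the paper simply writes ``\cref{Heart,DeVos} imply'' \cref{HeartHeart} without spelling out the details, and you have filled in exactly that implicit combination (DeVos's two-part bounded-treewidth partition, Heart applied to each part, disjoint palettes doubling the colour count to $2^{h+1}-4$, clustering $kw$).
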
 

Fix a graph $H$.  By definition, $H$ is a subgraph of \Closure{\ctd(H),|V(H)|}. 
Thus every $H$-minor-free graph contains no $C(\ctd(H),|V(H)|)$-minor. Hence, \cref{HeartHeart} implies
$$\cchi(\MM_H)\leq 2^{\ctd(H)+1}-4,$$
which is the upper bound in \cref{Main}. 

Note \cref{FatStar} below improves the $h=3$ case in \cref{Heart}, which leads to a small constant-factor improvement in \cref{Main} for $h\geq 3$.


\subsection{Second Proof}

We now present our second proof that $\cchi(\MM_H)$ is bounded from above by some function of $\ctd(H)$. This proof is self-contained (not using \cref{ErdosPosa,DeVos}). 

Let $T$ be a rooted tree. Recall that the \emph{closure} of $T$ is the graph $G$ with vertex set $V(T)$, where two vertices are adjacent in $G$ if one is an ancestor of the other in $T$. The \emph{weak closure} of $T$ is the graph $G$ with vertex set $V(T)$, where two vertices are adjacent in $G$ if one is a leaf and the other is one of its ancestors. For $h,k\geq 1$, let $\CompleteTree{h,k}$ be the rooted complete $k$-ary tree of depth $h$. Let 
$\WeakClosure{h,k}$ be the weak closure of $\CompleteTree{h,k}$.

\begin{lem}
\label{WeakStrong}
For $h,k\geq 2$, the graph $\WeakClosure{h,k}$ contains \Closure{h,k-1} as a minor. 
\end{lem}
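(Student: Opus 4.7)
The plan is to exhibit an explicit minor model of $\Closure{h,k-1}$ inside $G:=\WeakClosure{h,k}$. Let $T:=\CompleteTree{h,k}$ and choose a subtree $T^{*}\subseteq T$ isomorphic to $\CompleteTree{h,k-1}$ by picking, at every internal node encountered, exactly $k-1$ of its $k$ children of $T$ to keep. For each internal $v\in T^{*}$, denote the discarded $k$-th child by $c_v$ and write $T(c_v)$ for the subtree of $T$ rooted at $c_v$; note that every $T(c_v)$ is disjoint from $T^{*}$, and the subtrees $T(c_v)$ for different internal $v\in T^{*}$ are pairwise disjoint. For each internal $v\in T^{*}$, fix any leaf $\ell_v$ of $T$ that lies in $T(c_v)$ (one exists because $c_v$ has depth at most $h$, so some descendant of $c_v$ is a leaf of $T$).

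Next, I identify the vertex set of $\Closure{h,k-1}$ with $V(T^{*})$ and define branch sets by $B_v:=\{v\}$ when $v$ is a leaf of $T^{*}$ and $B_v:=\{v,\ell_v\}$ when $v$ is internal in $T^{*}$. Pairwise disjointness of the $B_v$ follows immediately because all $\ell_v$ lie in pairwise disjoint subtrees outside $T^{*}$. Each branch set is connected in $G$: the singleton case is trivial, and in the internal case $\ell_v$ is a leaf of $T$ whose ancestors in $T$ include $v$, so the edge $v\ell_v$ is present in the weak closure.

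The main step is verifying that every edge of $\Closure{h,k-1}$ is realised, i.e., that for each ancestor-descendant pair $(a,b)$ in $T^{*}$ there is an edge of $G$ joining $B_a$ to $B_b$. If $b$ is a leaf of $T^{*}$ (hence a leaf of $T$), then $b$ itself is a leaf of $T$ with ancestor $a$, so $ab\in E(G)$. Otherwise $b$ is internal in $T^{*}$, and $\ell_b$ is a leaf of $T$ descended from $b$ and therefore from $a$, so $a\ell_b\in E(G)$, giving the required edge between $a\in B_a$ and $\ell_b\in B_b$.

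The only mildly delicate point is the disjointness claim for the $T(c_v)$: if $u$ is a $T^{*}$-descendant of $v$, then $u\in T^{*}$, so $u$ is not a descendant of $c_v$, and hence no descendant of $u$, in particular $T(c_u)$, meets $T(c_v)$. With this small piece of bookkeeping done the construction goes through without any appeal to structural input beyond the definition of weak closure.
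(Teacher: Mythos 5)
Your proof is correct and follows essentially the same route as the paper's: both single out a copy of $\CompleteTree{h,k-1}$ inside $\CompleteTree{h,k}$ and use the discarded $k$-th subtree at each internal node to supply the missing ancestor--internal-node adjacencies via a leaf of $T$. The only difference is cosmetic: the paper contracts the entire discarded subtree $T_v$ into $v$, whereas you take the leaner branch set $\{v,\ell_v\}$ with a single chosen leaf, which is a small simplification of the same idea.
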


\pf{
Let $r$ be the root vertex. Colour $r$ blue. 
For each non-leaf vertex $v$, colour $k-1$ children of $v$ \emph{blue} and colour the other child of $v$ \emph{red}. 
Let $X$ be the set of blue vertices $v$ in $\CompleteTree{h,k}$, such that every ancestor of $v$ is blue. 
Note that $X$ induces a copy of $\CompleteTree{h,k-1}$ in $\CompleteTree{h,k}$. 
Let $v$ be a  non-leaf vertex in $X$. 
Let $w$ be the red child of $v$, and let $T_v$ be the subtree of $\CompleteTree{h,k}$ rooted at $w$. 
Then every leaf of $T_v$ is adjacent in $\WeakClosure{h,k}$ to $v$ and to every ancestor of $v$. 
Contract $T_v$ and the edge $vw$ into $v$. 
Now $v$ is adjacent to every ancestor of $v$ in $X$. 
Do this for each non-leaf vertex in $X$. 
Note that $T_u$ and $T_v$ are disjoint for distinct non-leaf vertices $u,v\in X$. 
Thus, we obtain \Closure{h,k-1} as a minor of $\WeakClosure{h,k}$. }

A \emph{model} of a graph $H$ in a graph $G$ is a collection $\{J_x : x \in V(H)\}$ of pairwise disjoint subtrees of $G$ such that for every $xy \in E(H)$ there is an edge of $G$ with one end in $V(J_x)$ and the other end in $V(J_y)$. Observe that a graph contains $H$ as a minor if and only if it contains a model of $H$. 

\begin{lem}
\label{Split}
For $h\geq 2$ and $k\geq 1$, if a graph $G$ contains $\WeakClosure{h,6k}$ as a minor, then $G$ contains subgraphs $G'$ and $G''$, both containing $\WeakClosure{h,k}$ as a minor, such that $|V(G')\cap V(G'')|\leq 1$. 
\end{lem}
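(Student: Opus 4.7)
I would start from a model $\{J_x : x \in V(\WeakClosure{h,6k})\}$ of $\WeakClosure{h,6k}$ in $G$ and split it by a weighted centroid of the root's branch set. Take $J_r$ as a subtree of $G$. For each child $c_i$ of the root, fix an attachment vertex $a_i \in V(J_r)$ witnessing the edge from $J_{c_i}$ to $J_r$; and for each leaf $\ell$ at depth $h$, fix an attachment vertex $b_\ell \in V(J_r)$ witnessing the edge from $J_\ell$ to $J_r$. Write $M_i$ for the part of the model consisting of $J_{c_i}$ together with all the descendant branch sets.

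Next I would weight each $u \in V(J_r)$ by $w(u) = |\{\ell : b_\ell = u\}|$, so the total weight is $(6k)^h$. A standard tree-centroid argument produces a vertex $v \in V(J_r)$ such that every component of $J_r - v$ carries weight at most half of the total. A greedy grouping of these components then yields a partition of $V(J_r) \setminus \{v\}$ into two classes whose associated subtrees $T_A, T_B$ (each taken together with $v$) each carry at least a constant fraction of the total leaf-attachment weight and satisfy $V(T_A) \cap V(T_B) = \{v\}$.

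I would then set $G'$ to consist of $T_A$ together with the branch sets $M_i$ and leaf branch sets $J_\ell$ that I select on the $A$-side, and $G''$ symmetrically. Because the branch sets outside $J_r$ are pairwise disjoint and $T_A \cap T_B = \{v\}$, this automatically gives $V(G') \cap V(G'') \subseteq \{v\}$. To extract a $\WeakClosure{h,k}$-minor in $G'$ I would use $T_A \cup \{v\}$ as the root branch set and, at each internal node of the $k$-ary tree, select $k$ out of the $6k$ available sub-models whose leaves have $b_\ell \in T_A \cup \{v\}$ (``$A$-good'' leaves). This selection runs top-down: the overall $A$-good proportion is at least a constant by the centroid choice, and at each internal vertex an averaging argument across its $6k$ children keeps enough sub-models with a comparable $A$-good proportion. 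The factor $6$ (rather than $2$) in the hypothesis is precisely what provides the slack for this averaging to succeed at each of the $h-1$ successive selection steps.

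The main obstacle I expect is controlling the degradation of the $A$-good proportion through the recursive selection: a leaf branch set $J_\ell$ lies on the correct side only if $b_\ell \in T_A \cup \{v\}$, and this property depends on global information about the model, not on the subtree of $\WeakClosure{h,6k}$ one is currently descending into. Handling this cleanly — in particular verifying that the factor $6$ gives enough room to keep $k$ good sub-models surviving at every level simultaneously, without accidentally using vertices that force $V(G') \cap V(G'')$ to exceed one — is where the bulk of the technical work will be.
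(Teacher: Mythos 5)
Your high-level plan --- find a split vertex $v$ inside the root's branch set $J_r$ and divide the model into two sides sharing only $v$ --- is the same as the paper's. But the mechanism you propose for extracting a $\WeakClosure{h,k}$ sub-model on each side has a genuine gap that the factor~$6$ does \emph{not} fix.

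The problem is that the averaging argument degrades too quickly through the recursion, and you have correctly located the weak point but underestimated it. Suppose an internal node of $\CompleteTree{h,6k}$ has a fraction $p$ of its descendant leaves attached on the $A$-side. Among its $6k$ children, you want $k$ of them each with $A$-good fraction at least some $q$. In the worst case $5k$ children have fraction just below $q$ and $k$ children have fraction up to~$1$, so $6p \leq 5q + 1$, i.e.\ the best you can guarantee is $q \geq (6p-1)/5$, which is strictly less than $p$ for all $p<1$. Starting from $p_0 \geq 1/3$ (the centroid/greedy split) this drops to $1/5$, then to $1/25 < 1/6$: by the third level you cannot even guarantee a single $A$-good leaf under a selected child. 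So for $h \geq 4$ the top-down averaging selection simply fails; no constant slack factor can repair a multiplicatively degrading fraction across $h-1$ levels.

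The paper avoids this by abandoning fractions entirely in favour of a discrete predicate. It replaces $J_r$ with an auxiliary tree $Q$ of maximum degree~$3$ (obtained by splitting vertices) whose leaves are in bijection with the leaves of $\CompleteTree{h,6k}$, and for a subset $L'$ of those leaves it declares a non-leaf $y$ of $\CompleteTree{h,6k}$ \emph{alive} with respect to $L'$ iff at least $2k$ of its $6k$ children are alive; $L'$ is \emph{good} if the root is alive. The key pigeonhole observation is that if $L$ is partitioned into at most three parts $L_1, L_2, L_3$, then every non-leaf vertex is alive with respect to at least one $L_i$ (since $6k/3 = 2k$), so at least one part is good --- and this holds exactly, with no degradation, at every level. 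Each edge of $Q$ induces a $2$-way partition of $L$ into $L_{vw}, L_{wv}$, and the orientation/sink argument on the cubic tree $Q$ either produces an edge for which both sides are good (giving the desired $G', G''$ sharing one vertex of $J_r$), or a sink whose three incident subtrees give a $3$-way partition with all three parts not good, contradicting the pigeonhole fact. The $2k$ margin (rather than $k$) is also what lets one extract two disjoint $k$-element families of alive children of the root when both sides are good. So $6 = 3 \times 2$: factor~$3$ for the degree bound on $Q$, factor~$2$ for splitting $\geq 2k$ into two disjoint families of~$k$. Your proposal would need this ``alive'' bookkeeping (or some equivalently non-degrading invariant) in place of the raw $A$-good fraction to go through.
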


\pf{
Let $\{J_x : x \in V(\WeakClosure{h,6k})\}$ be a model of $\WeakClosure{h,6k}$ in $G$. Let $r$ be the root vertex of $\WeakClosure{h,6k}$. We may assume that for each leaf vertex $x$ of  $\CompleteTree{h,6k}$, there is exactly one edge between $J_x$ and $J_r$. 

Let $Q$ be a tree obtained from $J_r$ by splitting vertices, where:
\begin{itemize}
\item $Q$ has maximum degree at most 3, 
\item $J_r$ is a minor of $Q$; let $\{Q_v:v \in V(J_r)\}$ be the model of $J_r$ in $Q$, 
so each edge $vw$ of $J_r$ corresponds to an edge of $Q$ between $Q_v$ and $Q_w$, 
\item there is a set $L$ of leaf vertices in $Q$, and a bijection $\phi$ from $L$ to the set of leaves of $\CompleteTree{h,6k}$, 
such that for each leaf $x$ of  $\CompleteTree{h,6k}$, if the edge between $J_x$ and $J_r$ in $G$ is incident with vertex $v$ in $J_r$, then $\phi^{-1}(x)$ is a vertex $z$ in $L\cap Q_v$, in which case we say $x$ and $z$ are \emph{associated}.
\end{itemize}

Let $L'\subseteq L$. Apply the following `propagation' process in $\CompleteTree{h,6k}$. Initially, say that the vertices in $\phi(L')$ are \emph{alive}  with respect to $L'$. For each parent vertex $y$ of leaves in $\CompleteTree{h,6k}$, if at least $2k$ of its $6k$ children are alive with respect to $L'$, then $y$ is also alive with respect to $L'$. Now propagate up $\CompleteTree{h,6k}$, so that a non-leaf vertex $y$ of $\CompleteTree{h,6k}$ is \emph{alive} if and only if at least $2k$ of its children are alive with respect to $L'$. Say $L'$ is \emph{good} if $r$ is alive with respect to $L'$.

For an edge $vw$ of $Q$ let $L_{vw}$ be the set of vertices in $L$  in the subtree of $Q-vw$ containing $v$, and let $L_{wv}$ be the set of vertices in $L$ in the subtree of $Q-vw$ containing $w$. Since $L$ is the disjoint union of $L_{vw}$ and $L_{wv}$, every leaf vertex of $\CompleteTree{h,6k}$ is in exactly one of $\phi(L_{vw})$ or $\phi(L_{wv})$. By induction, every vertex in  $\CompleteTree{h,6k}$ is alive with respect to $L_{vw}$ or $L_{wv}$ (possibly both). In particular, $L_{vw}$ or $L_{wv}$ is good (possibly both). 

Suppose that both $L_{vw}$ and $L_{wv}$ are good. Then at least $2k$ children of $r$ are alive with respect to $L_{vw}$, and at least $2k$ children of $r$ are alive with respect to $L_{wv}$. Thus there are disjoint sets $A$ and $B$, each consisting of $k$ children of $r$, where every vertex in $A$ is alive with respect to $L_{vw}$, and every vertex in $B$ is alive with respect to $L_{wv}$. We now define a set of vertices, said to be \emph{chosen} by $v$, all of which are alive with respect to $L_{vw}$. First, each vertex in $A$ is \emph{chosen} by $v$. Then for each non-leaf vertex $z$ chosen by $v$, choose $k$ children of $z$ that are also alive with respect to $L_{vw}$, and say they are \emph{chosen} by $v$. Continue this process down to the leaves of  $\CompleteTree{h,6k}$. We now define the graph $G'$, which is initially empty. For each vertex $z$ chosen by $v$, add the subgraph $J_z$ to $G'$. Furthermore, for each leaf vertex $z$ of $\CompleteTree{h,6k}$ chosen by $v$ and for each ancestor $y$ of $z$ chosen by $v$, add the edge in $G$ between $J_z$ and $J_y$ to $G'$. Define $G''$ analogously with respect to $B$ and $L_{wv}$. At this point, $G'$ and $G''$ are disjoint. 

The edge $vw$ in $Q$ either corresponds to an edge or a vertex of $J_r$. 
First suppose that $vw$ corresponds to an edge $ab$ of $J_r$, where $v$ is in $Q_a$ and $w$ is in $Q_b$. 
Let $J_r^1$ be the subtree of $J_r-ab$ containing $a$. 
Add $J_r^1$ to $G'$, plus the edge in $G$ between $J_r^1$ and $J_z$ for each leaf $z$ of  $\CompleteTree{h,6k}$ chosen by $v$. 
Similarly, let $J_r^2$ be the subtree of $J_r-ab$ containing $b$, and 
add $J_r^2$ to $G''$, plus the edge in $G$ between $J_r^2$ and $J_z$ for each leaf $z$ of  $\CompleteTree{h,6k}$ chosen by $w$. 
Observe that $G'$ and $G''$ are disjoint, and they both contain $\WeakClosure{h,k}$ as a minor, as desired. 

Now consider the case in which $vw$ corresponds to a vertex $z$ in $J_r$; that is, $v$ and $w$ are both in $Q_z$. 
Let $J_r^1$ be the subtree of $J_r$ corresponding to the subtree of $Q-vw$ containing $v$ (which includes $z$).  
Add $J_r^1$ to $G'$, plus the edge in $G$ between $J_r^1$ and $J_z$ for each leaf $z$ of  $\CompleteTree{h,6k}$ chosen by $v$. 
Similarly, let $J_r^2$ be the subtree of $J_r$ corresponding to the subtree of $Q-vw$ containing $w$ (which includes $z$).  
Add  $J_r^2$ to $G''$, plus the edge in $G$ between $J_r^2$ and $J_z$ for each leaf $z$ of  $\CompleteTree{h,6k}$ chosen by $w$. 
Observe that both $G'$ and $G''$ contain $\WeakClosure{h,k}$ as a minor, and $V(G_1)\cap V(G_2)=\{z\}$, as desired. 

We may therefore assume that for each edge $vw$ of $Q$, exactly one of $L_{vw}$ and $L_{wv}$ is good. Orient $vw$ towards $v$ if $L_{vw}$ is good, and towards $w$ if $L_{wv}$ is good. Since at most one leaf of $\CompleteTree{h,6k}$ is associated with each leaf of $Q$, each edge incident with a leaf of $Q$ is oriented away from the leaf. Since $Q$ is a tree, $Q$ contains a sink vertex $v$, which is therefore not a leaf. Let $w_1$, $w_2$ and possibly $w_3$ be the neighbours of $v$ in $Q$. Let $L_{i}$ be the set of vertices in $L$ in the subtree of $Q-vw_i$  containing $w_i$. Since $vw_i$ is oriented towards $v$, with respect to $vw_i$, the set $L_{i}$ is not good. Since no leaf of $\CompleteTree{h,6k}$ is associated with $v$, the sets $\phi(L_{1})$, $\phi(L_{2})$ and $\phi(L_{3})$ partition the leaves of $\CompleteTree{h,6k}$. Since each non-leaf vertex $y$ in $\CompleteTree{h,6k}$ has $6k$ children, $y$ is alive with respect to at least one of $L_{1}$, $L_{2}$ or $L_{3}$. In particular, at least one of $L_{1}$, $L_{2}$ or $L_{3}$ is good. This is a contradiction. 
}

\begin{thm}
\label{ColourNoWeakClosure}
Let $f(h):=\frac16 ( 4^h- 4)$ for every $h\geq 1$. 
Then there is a function $g:\mathbb{N}\times\mathbb{N}\rightarrow\mathbb{N}$ such that for every $k\geq 1$, 
every graph either contains $\WeakClosure{h,k}$ as a minor or is $f(h)$-colourable with clustering $g(h,k)$. 
\end{thm}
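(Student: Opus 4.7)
The plan is induction on $h$. The base case $h=1$ is immediate since $\WeakClosure{1,k}$ is a single vertex and $f(1)=0$, so only the edgeless graph avoids the minor.

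For the inductive step, assume the result for $h$ with clustering function $g(h,\cdot)$, and let $G$ be a graph with no $\WeakClosure{h+1,k}$ minor; handling components separately I may assume $G$ is connected. Fix $r\in V(G)$ and let $V_0,V_1,V_2,\dots$ be the BFS layering starting at $r$. The driving observation is that $V_0\cup\dots\cup V_{i-1}$ is connected and every vertex of $V_i$ has a neighbour in $V_{i-1}$, so contracting that union to a single vertex $r^*$ makes $r^*$ adjacent to every vertex of $V_i$. Consequently, if $G[V_i]$ contained $k$ pairwise vertex-disjoint $\WeakClosure{h,k}$ minors, joining them through $r^*$ (which supplies the root of $\WeakClosure{h+1,k}$ and all leaf-to-root edges, the subtree adjacencies coming from the $\WeakClosure{h,k}$ minors themselves) would yield a $\WeakClosure{h+1,k}$ minor in $G$, contradicting our assumption. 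So for every $i\geq 1$, $G[V_i]$ has no $k$ pairwise vertex-disjoint copies of $\WeakClosure{h,k}$ as a minor.

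I would then apply \cref{Split} (with parameter $\lfloor k/6\rfloor$) iteratively to each $G[V_i]$: while a current piece still contains $\WeakClosure{h,k}$ as a minor, split it into two subgraphs sharing at most one vertex; after finitely many rounds every atomic piece contains no $\WeakClosure{h,k}$ minor. This produces a tree decomposition $D$ of $G[V_i]$ whose nodes are the atomic pieces and whose edges record the single shared vertices. Since $D$ is bipartite, I would 2-colour it, use two disjoint palettes of $f(h)$ colours for the two sides (each atomic piece being $f(h)$-coloured with bounded clustering by the induction hypothesis), and reserve one extra colour for the shared vertices; this yields a $(2f(h)+1)$-colouring of $G[V_i]$. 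Alternating even and odd BFS layers with two disjoint such palettes then gives a $2(2f(h)+1)=4f(h)+2=f(h+1)$-colouring of $G$, as required.

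The hard part will be controlling the clustering of the shared-vertex colour class: its monochromatic components may span several atomic pieces by hopping between successive shared vertices, so one must bound the depth of $D$ in terms of $h$ and $k$ only. I would do this by combining the key observation with a vertex-deletion argument: iterating \cref{Split} on a hypothetical $\WeakClosure{h,K}$ minor inside $G[V_i]$ yields $k$ almost-disjoint $\WeakClosure{h,k}$ copies sharing at most $k-1$ vertices in total, and if $K=K(h,k)$ is chosen large enough one can excise these shared vertices and still extract $k$ fully disjoint $\WeakClosure{h,k}$ minors, contradicting the observation. The delicate point is the bookkeeping: deleting a single vertex of $G$ can disconnect a branch set of the minor model and thus destroy more than one unit of branching, so the splitting sequence and the choice of shared vertex at each stage must be arranged so that enough branching survives the $\leq k-1$ deletions. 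Once $K(h,k)$ is in hand, every atomic piece of $D$ meets only boundedly many shared vertices and the clustering of every colour class is bounded, giving the required function $g(h+1,k)$.
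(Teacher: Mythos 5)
Your opening observations are right, and they match the paper's: BFS layering, the observation that a layer $V_i$ cannot contain $k$ disjoint $\WeakClosure{h-1,k}$ minors, alternating palettes between even and odd layers, and the recursion $f(h)=4f(h-1)+2$. But the way you propose to use \cref{Split} inside a layer has two genuine gaps, and the paper's actual argument is structured precisely to avoid both.

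First, coverage. \cref{Split} only asserts that $G$ contains subgraphs $G'$ and $G''$ with $|V(G')\cap V(G'')|\leq 1$; it says nothing about $V(G')\cup V(G'')=V(G)$, and in the proof one can see that many vertices of $G$ end up in neither $G'$ nor $G''$. So your ``tree decomposition $D$ of $G[V_i]$ whose nodes are the atomic pieces'' is not a decomposition of $G[V_i]$ at all: iterated splitting throws away vertices, and the colouring you extract from the atomic pieces simply does not colour all of $G[V_i]$. (There is also a secondary problem with treating $D$ as bipartite and giving each side its own palette: a vertex lying in several bags of a tree decomposition lies in a subtree of $D$, and that subtree can meet both colour classes of $D$.)

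Second, depth. You correctly flag that the number of rounds of splitting, and hence the size of the shared-vertex colour class, is not controlled by $h$ and $k$. Your proposed repair --- pick $K=K(h,k)$ large, iterate \cref{Split} on a $\WeakClosure{h,K}$ minor to produce $k$ almost-disjoint copies, then delete the $\leq k-1$ shared vertices and re-extract $k$ fully disjoint copies --- is essentially a sketch of a gap-closing idea, but it is not carried out and the bookkeeping you yourself describe as ``delicate'' is exactly where it founders: $G'$ and $G''$ overlap pairwise in $\leq 1$ vertex, but there is no control over how those overlap vertices distribute across branch sets several levels down, so one deletion can destroy many branches at once.

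The paper resolves both problems in one stroke by \emph{not} iterating. Inside $G[V_i]$ it takes $s$ to be the \emph{maximum} integer for which there exist $s$ pairwise disjoint subgraphs $G_1,\dots,G_s$ each containing a $\WeakClosure{h-1,\max\{1,6^{k-s}\}k}$ minor --- note the nested scale of parameters, calibrated so that one application of \cref{Split} to each $G_j$ steps down exactly one level. With $G'_j,G''_j$ from one application of \cref{Split}, let $X:=\bigcup_j (V(G'_j)\cap V(G''_j))$, $A:=G[V_i]-\bigcup_j V(G'_j)$, $B:=G[V_i]-\bigcup_j V(G''_j)$. Because the $G_j$ are pairwise disjoint, every vertex of $V_i$ lies in $A$, in $B$, or in $X$ --- this is how the paper guarantees coverage. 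Maximality of $s$ forces $A$ (respectively $B$) to contain no $\WeakClosure{h-1,6^{k-s-1}k}$ minor, since otherwise $A,G'_1,\dots,G'_s$ (respectively $B,G''_1,\dots,G''_s$) would be $s+1$ disjoint subgraphs at the stronger scale. So $A$ and $B$ are each $f(h-1)$-colourable with bounded clustering by induction, and $X$ has size $\leq s\leq k-1$, so one extra colour handles it with clustering $k-1$. This yields $(2f(h-1)+1)$ colours per layer with no unbounded recursion depth and nothing uncoloured. I'd suggest replacing your iterated-split step with this single-level, maximality-based argument.
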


\pf{
We proceed by induction on $h\geq 1$. 
In the base case, $h=1$, since $\WeakClosure{1,k}$ is the 1-vertex graph, the result holds with $f(1)=g(1,k)=0$. 
Now assume that $h\geq 2$ and the result holds for $h-1$ and all $k$.

Let $G$ be a graph, which we may assume is connected. Let $V_0,V_1,\dots$ be a BFS layering of $G$.

Fix $i\geq 1$. Let $s$ be the maximum integer such that $G[V_i]$ contains $s$ disjoint subgraphs $G_1,\dots,G_s$ each containing a $\WeakClosure{h-1,\max\{1,6^{k-s}\}k}$ minor. First suppose that $s\geq k$. Then $G[V_i]$ contains $k$ disjoint subgraphs each containing a $\WeakClosure{h-1,k}$  minor. Contracting $V_0\cup\dots\cup V_{i-1}$ to a single vertex gives a $\WeakClosure{h,k}$ minor (since every vertex in $V_i$ has a neighbour in $V_{i-1}$), and we are done. Now assume that $s\leq k-1$.

If $s=0$, then $G[V_i]$ contains no $\WeakClosure{h-1,6^{k-1}k}$ minor. By induction, $G[V_i]$ is $f(h-1)$-colourable with clustering $g(h-1,6^{k-1}k)$. 

Now consider the case that $s\in[1,k-1]$. Apply  \cref{Split} to $G_j$ for each $j\in[1,r]$. Thus $G_j$ contains subgraphs $G'_j$ and $G''_j$, both containing $\WeakClosure{h-1,6^{k-s-1}k}$ as a minor, such that $|V(G'_j)\cap V(G''_j)|\leq 1$. Let $X:= \bigcup_{j=1}^s ( V(G'_j) \cap V(G''_j) ) $. Thus $|X|\leq s\leq k-1$. Let $A:=G[V_i] - \bigcup_{j=1}^s V(G'_j)$ and  $B:=G[V_i] - \bigcup_{j=1}^s V(G''_j)$. By the maximality of $s$, the subgraph $A$ contains no $\WeakClosure{h-1,6^{k-s-1}k}$ minor (as otherwise $A,G'_1,\dots,G'_s$ would give $s+1$ pairwise disjoint subgraphs satisfying the requirements). By induction, $A$ is $f(h-1)$-colourable with clustering $g(h-1,6^kk)$ since $6^{k-s-1}k\leq 6^kk$. Similarly, $B$ is $f(h-1)$-colourable with clustering $g(h-1,6^kk)$. By construction, each vertex in $G[V_i]$ is in at least one of $X$, $A$ or $B$. Use one new colour for $X$, which has size at most $s\leq k-1$. 

In both cases, $G[V_i]$ is $(2f(h-1)+1)$-colourable with clustering $\max\{g(h-1,6^kk),k-1\}$. 
Use a different set of $2f(h-1)+1$ colours for even $i$ and for odd $i$, 
and colour $r$ by one of the colours used for even $i$. 
No edge joins $V_i$ with $V_j$ for $j\geq i+2$. 
Since $f(h)= 4 f(h-1) + 2$,  $G$ is $f(h)$-colourable with clustering $g(h,k):=\max\{g(h-1,6^kk),k-1\}$. 
}

Note that the clustering function $g(h,k)$ in \cref{ColourNoWeakClosure} satisfies
$$g(h,k)\leq k6^{k6^{k6^{\reflectbox{$\ddots$}^{k6^k}}}},$$
where the number of $k$s is $h$. 

\begin{thm}
\label{SecondProof}
For every graph $H$, 
$$\cchi(\MM_H)\leq \tfrac16 ( 4^{\ctd(H)}-4).$$
\end{thm}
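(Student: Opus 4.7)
The plan is to deduce \cref{SecondProof} by combining the two preceding technical results, \cref{WeakStrong} and \cref{ColourNoWeakClosure}, with the elementary observation that any graph of connected tree-depth $h$ embeds into the closure of a sufficiently branchy tree of depth $h$. Write $h:=\ctd(H)$ and $n:=|V(H)|$.

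First, I would justify that $H$ is a subgraph of $\Closure{h,n-1}$. By definition of connected tree-depth, $H$ is a subgraph of the closure of some rooted tree $T$ of depth $h$. Since $T$ has at most $n$ vertices, each non-leaf of $T$ has at most $n-1$ children, so $T$ is a subtree of $\CompleteTree{h,n-1}$, and hence $H$ is a subgraph of $\Closure{h,n-1}$. Next, by \cref{WeakStrong}, the graph $\WeakClosure{h,n}$ contains $\Closure{h,n-1}$ as a minor, and hence contains $H$ as a minor. Consequently, every $H$-minor-free graph is also $\WeakClosure{h,n}$-minor-free.

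Now I would invoke \cref{ColourNoWeakClosure} with parameter $k:=n$ (assuming $h\geq 2$; the case $h=1$ forces $H$ to be the one-vertex graph, for which $\MM_H$ contains only the empty graph and the bound is vacuous). Applied to any $H$-minor-free graph $G$, the theorem yields that $G$ is $f(h)$-colourable with clustering $g(h,n)$, where $f(h)=\tfrac{1}{6}(4^h-4)$. Since $n=|V(H)|$ is a constant depending only on $H$, the clustering constant $g(h,n)$ depends only on $H$, which is all that is required by the definition of $\cchi(\MM_H)$. Therefore
$$\cchi(\MM_H)\leq f(\ctd(H))=\tfrac{1}{6}\bigl(4^{\ctd(H)}-4\bigr),$$
as claimed.

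I do not anticipate a significant obstacle: all the genuine work has been done in \cref{WeakStrong} (which reduces the strong closure to the weak closure, allowing the splitting argument to proceed) and in the inductive colouring argument of \cref{ColourNoWeakClosure}. The remaining step is essentially bookkeeping — fitting $H$ into a complete $k$-ary closure of the correct depth, and tracking that the branching factor $k$ one needs is a function of $|V(H)|$ alone, so that the resulting clustering bound $g(\ctd(H),|V(H)|)$ is admissible.
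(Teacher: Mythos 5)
Your proposal is correct and follows essentially the same route as the paper: embed $H$ into a complete-tree closure, pass to the weak closure via \cref{WeakStrong}, then apply \cref{ColourNoWeakClosure}. The only inessential difference is that you use the slightly tighter branching factor $n-1$ (so $\WeakClosure{h,n}$) where the paper uses $n$ (so $\WeakClosure{h,n+1}$); this affects only the clustering constant, not the number of colours, and both are valid.
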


\pf{
Let $G$ be a graph not containing $H$ as a minor. 
By definition, $H$ is a subgraph of $\Closure{\ctd(H),|V(H)|}$. 
Thus $G$ does not contain $\Closure{\ctd(H),|V(H)|}$ as a minor. 
By \cref{WeakStrong}, $G$ does not contain $\WeakClosure{\ctd(H),|V(H)|+1}$ as a minor.
By \cref{ColourNoWeakClosure}, there is a constant $c=c(H)$, such that $G$ is 
$\frac16 (4^{\ctd(H)}-4)$-colourable with clustering at most $c$. 
}

\subsection{Lower Bound}
\label{LowerBoundSection}

We now prove \cref{NewLowerBound}, where $H_k:= \Closure{k,3}$, the closure of the complete ternary tree of depth $k$ (which has tree-depth and connected tree-depth $k$). 

\begin{lem}
\label{TernaryTree}
$\cchi(\MM_{\Closure{k,3}}) \geq 2k-2$ for $k\geq 2$.
\end{lem}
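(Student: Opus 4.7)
The plan is to proceed by induction on $k \geq 2$. For the base case $k=2$, the graph $\Closure{2,3}$ coincides with $K_{1,3}$; since every $K_{1,3}$-minor-free graph has maximum degree at most $2$, the class contains arbitrarily long paths, and a path on more than $c$ vertices cannot be $1$-coloured with clustering $c$. Hence $\cchi(\MM_{K_{1,3}}) \geq 2 = 2 \cdot 2 - 2$.

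For the inductive step with $k \geq 3$, assume the lemma for $k-1$: for every $c \geq 1$ there is a $\Closure{k-1,3}$-minor-free graph $G_c$ that is not $(2k-5)$-colourable with clustering $c$. Given $c$, set $N := 2c-1$ and define
\[ H_c := K_2 + N \cdot G_c, \]
the join of an edge $s_1 s_2$ with the disjoint union of $N$ copies of $G_c$. I would argue that $H_c \in \MM_{\Closure{k,3}}$ and that $H_c$ is not $(2k-3)$-colourable with clustering $c$. For the minor-freeness, consider any hypothetical model of $\Closure{k,3} = K_1 + 3\,\Closure{k-1,3}$ in $H_c$. The two vertices $s_1,s_2$ collectively lie in at most two branch sets, while $V(\Closure{k,3})$ partitions into the four groups $\{r\} \sqcup V(T_1) \sqcup V(T_2) \sqcup V(T_3)$ with each $T_i \cong \Closure{k-1,3}$. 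At least one subtree $T_i$ thus has all of its branch sets disjoint from $\{s_1,s_2\}$; the associated $\Closure{k-1,3}$-minor sits inside $H_c - \{s_1,s_2\} = N \cdot G_c$ and, being connected, inside a single copy of $G_c$ — contradicting $\Closure{k-1,3}$-minor-freeness of $G_c$.

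For the colouring argument, given a putative $(2k-3)$-colouring $\phi$ of $H_c$ with clustering $c$, write $\alpha_i := \phi(s_i)$. Since $s_i$ is joined to every vertex of $N \cdot G_c$, every colour-$\alpha_i$ vertex of $N \cdot G_c$ lies in the monochromatic component of $s_i$, which has size at most $c$. In the generic case $\alpha_1 \neq \alpha_2$, each colour $\alpha_i$ appears on at most $c-1$ vertices of $N \cdot G_c$, so at most $2c-2$ copies of $G_c$ are touched by the two colours; since $N = 2c-1$, some copy $G_c^{(j)}$ avoids both, and its induced colouring uses only $2k-5$ colours with clustering $c$ — contradicting the choice of $G_c$.

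The main obstacle is the degenerate sub-case $\alpha_1 = \alpha_2$, where the $K_2$-hub absorbs only one colour. Here the $\alpha$-monochromatic component still has size $\leq c$ and so contains at most $c-2$ vertices of $N\cdot G_c$; the same pigeonhole then yields only that some copy of $G_c$ is $(2k-4)$-coloured with clustering $c$, which is one colour short of contradicting the inductive hypothesis. Closing this gap appears to require either strengthening the inductive statement (for instance, producing alongside each $G_c$ a designated vertex or small subset whose colour propagates constraints, so that a repeat colour across the hub forces a large monochromatic component in one of the copies) or enlarging the construction so that $\phi(s_1)=\phi(s_2)$ is structurally incompatible with clustering $c$. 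Since any $K_3$-join generally destroys $\Closure{k,3}$-minor-freeness as soon as $G_c$ contains several disjoint copies of $\Closure{k-2,3}$, any such modification of the hub is delicate, and handling this degenerate case is where the real work of the proof lies.
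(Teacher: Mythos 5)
You have the right framework, and you have correctly diagnosed the flaw in it: with a $K_2$ hub, nothing forbids $\phi(s_1)=\phi(s_2)$, and then your pigeonhole is one colour short. This degenerate case is a genuine gap, and the paper closes it not by strengthening the inductive statement but by replacing the hub. Instead of a single edge, the paper uses a path $(v_1,\dots,v_{c+1})$ on $c+1$ vertices, and for each of the $c$ edges $\{v_i,v_{i+1}\}$ of this path it attaches $2c-1$ pairwise disjoint copies of $G_{k-1}$ joined completely to $\{v_i,v_{i+1}\}$. Because the hub path has $c+1>c$ vertices, in any colouring with clustering $c$ it cannot be monochromatic, so some edge $v_iv_{i+1}$ is bichromatic; your pigeonhole argument then runs unchanged on the $2c-1$ copies attached to that edge, with no degenerate sub-case to worry about.

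The minor-freeness argument also needs a little more care than in your $K_2$ version, but it goes through for essentially the reason you suspected it would be delicate. Take a $\Closure{k,3}$-model of minimum total size; since the hub path is connected and dominating in $G_k$, minimality forces the branch set $J_r$ of the root $r$ to be a subpath $(v_i,\dots,v_j)$ of the hub. The three disjoint $\Closure{k-1,3}$-models in $\Closure{k,3}-r$ must each be adjacent to $J_r$, yet between them they can contain at most the two extra path vertices $v_{i-1}$ and $v_{j+1}$; hence one of the three lives entirely in a component of $G_k-\{v_{i-1},\dots,v_{j+1}\}$ that is adjacent to $J_r$, and every such component is a single copy of $G_{k-1}$ (since each copy is attached to exactly one edge of the hub). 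This contradicts the inductive hypothesis. Your instinct that a $K_3$-type hub would break minor-freeness was sound; the point is that the hub should be a path, with attachments localised to single edges of that path, not enlarged cliques.
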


\pf{
Fix an integer $c$. We now recursively define graphs $G_k$ (depending on $c$), and show by induction on $k$ that 
$G_k$ has no $(2k-3)$-colouring with clustering $c$, and \Closure{k,3} is not a minor of $G_k$. 

For the base case $k=2$, let $G_2$ be the path on $c+1$ vertices. Then $G_2$ has no $\Closure{2,3}=K_{1,3}$ minor, and $G_2$ has no 1-colouring with clustering $c$. 

Assume $G_{k-1}$ is defined for some $k\geq 3$, that $G_{k-1}$ has no $(2k-5)$-colouring with clustering $c$, and \Closure{k-1,3} is not a minor of $G_{k-1}$. 
As illustrated in \cref{NewConstruction}, let $G_k$ be obtained from a path $(v_1,\dots,v_{c+1})$ as follows: 
for $i\in\{1,\dots,c\}$ add $2c-1$ pairwise disjoint copies of $G_{k-1}$ complete to $\{v_i,v_{i+1}\}$. 

\begin{figure}[t]
\centering

\includegraphics[width=\textwidth]{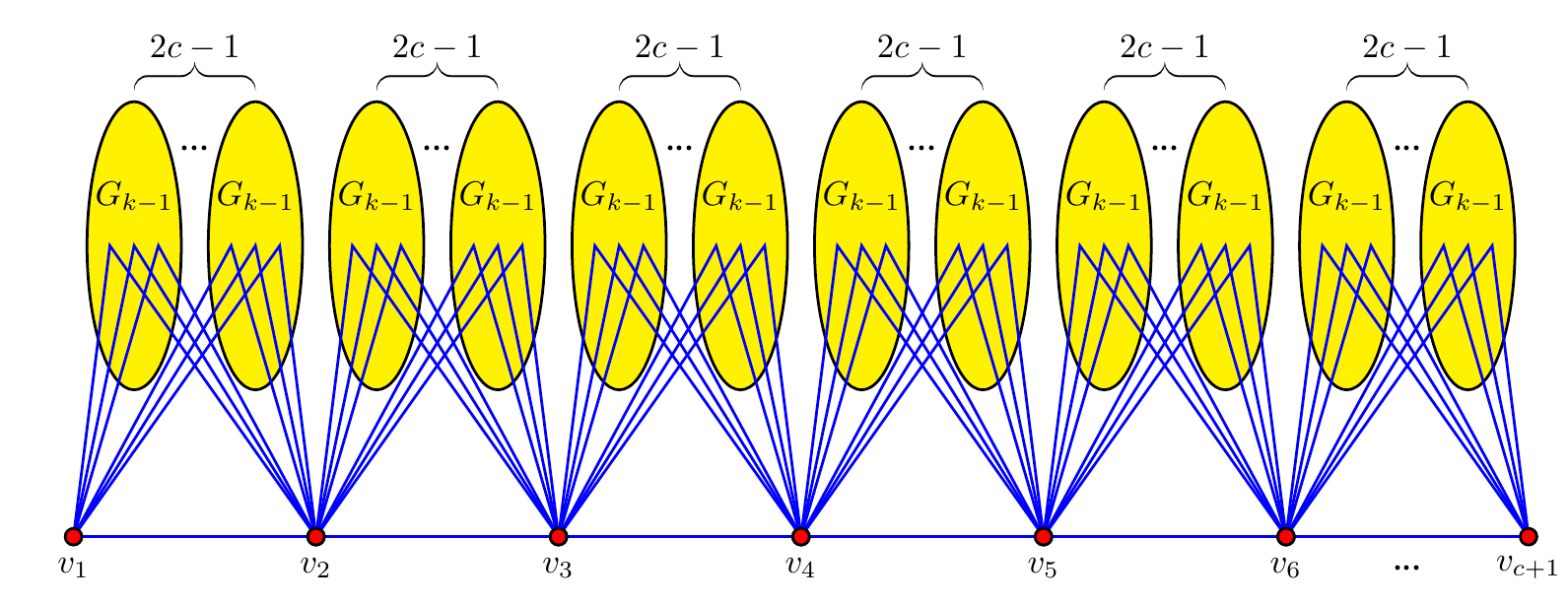}

\vspace*{-3ex}
\caption{\label{NewConstruction} Construction of $G_k$.}
\end{figure}

Suppose that $G_k$ has a $(2k-3)$-colouring with clustering $c$. 
Then $v_i$ and $v_{i+1}$ receive distinct colours for some $i\in\{1,\dots,c\}$. 
Consider the $2c-1$ copies of $G_{k-1}$ complete to $\{v_i,v_{i+1}\}$.  
At most $c-1$ such copies contain a vertex assigned the same colour as $v_i$, 
and at most $c-1$ such copies contain a vertex assigned the same colour as $v_{i+1}$. 
Thus some copy avoids both colours. 
Hence $G_{k-1}$ is $(2k-5)$-coloured with clustering $c$, which is a contradiction. 
Therefore $G_k$ has no $(2k-3)$-colouring with clustering $c$. 

It remains to show that \Closure{k,3} is not a minor of $G_k$. 
Suppose that $G_k$ contains a model $\{J_x : x \in V(\Closure{k,3})\}$ of \Closure{k,3}. 
Let $r$ be the root vertex in \Closure{k,3}. 
Choose the \Closure{k,3}-model to minimise $\sum_{x\in V(\Closure{k,3})} |V(J_x)|$. 
Since $\{v_1,\dots,v_{c+1}\}$ induces a connected dominating subgraph in $G_k$, by the minimality of the model, 
$J_r$ is a connected subgraph of $(v_1,\dots,v_{c+1})$. 
Say $J_r=(v_i,\dots,v_j)$. Note that $\Closure{k,3}-r$ consists of three pairwise disjoint copies of \Closure{k-1,3}. 
The model $X$ of one such copy avoids $v_{i-1}$ and $v_{j+1}$ (if these vertices are defined). 
Since \Closure{k-1,3} is connected,  $X$ is contained in a component of $G_k-\{v_{i-1},\dots,v_{j+1}\}$ 
and is adjacent to $(v_i,\dots,v_j)$. 
Each such component is a copy of $G_{k-1}$. 
Thus \Closure{k-1,3} is a minor of $G_{k-1}$, which is a contradiction.
Thus \Closure{k,3} is not a minor of $G_k$. 
}

\section{2-Colouring with Bounded Clustering}
\label{2Colouring}

This section considers the following question: which minor-closed graph classes have clustered chromatic number 2? To answer this question we introduce three classes of graphs that are not 2-colourable with bounded clustering, as illustrated in \cref{ThreeExamples}. 

The first example is the \emph{$n$-fan}, which is the graph obtained from the $n$-vertex path by adding one dominant vertex. If the $n$-fan is 2-colourable with clustering $c$, then the underlying path contains at most $c-1$ vertices of the same colour as the dominant vertex, implying that the other colour has at most $c$ monochromatic components each with at most $c$ vertices, and $n\leq c^2+c-1$. That is, if $n\geq c^2+c$ then the $n$-fan is not 2-colourable with clustering $c$. 

The second example is the \emph{$n$-fat star}, which is the graph obtained from the $n$-star (the star with $n$ leaves) as follows: for each edge $vw$ in the $n$-star, add $n$ degree-2 vertices adjacent to $v$ and $w$. Note that the $n$-fat star is \Closure{3,n}. Suppose that the $n$-fat star has a 2-colouring with clustering $c\leq n$. Deleting the dominant vertex in the $n$-fat star gives $n$ disjoint $n$-stars. Since $n\geq c$, in at least one of these $n$-stars, no vertex receives the same colour as the dominant vertex, implying there is a monochromatic component on $n+1\geq c+1$ vertices. Thus, for $n\geq c$ there is no 2-colouring of the  $n$-fat star with clustering $c$. 

The third example is the \emph{$n$-fat path}, which is the graph obtained from the $n$-vertex path as follows: for each edge $vw$ of the $n$-vertex path, add $n$ degree-2 vertices adjacent to $v$ and $w$. If $n\geq 2c-1$ then in every 2-colouring of the $n$-fat path with clustering $c$, adjacent vertices in the underlying path receive the same colour, implying that the underlying path is contained in a monochromatic component with more than $c$ vertices. Thus, for $n\geq 2c-1$ there is no 2-colouring of the $n$-fat path with clustering $c$. 

\begin{figure}[ht]
\centering
\includegraphics{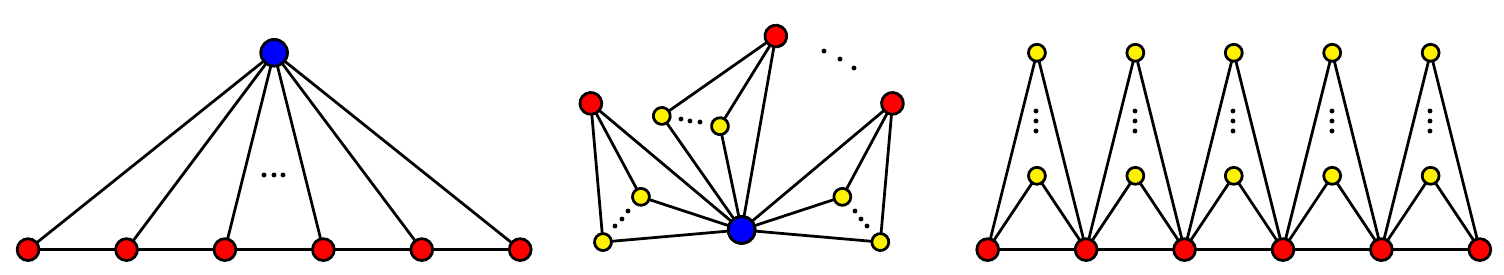}
fan \hspace*{32mm} fat star \hspace*{32mm} fat path
\caption{Graph classes that are not 2-colourable with bounded clustering.
\label{ThreeExamples}}
\end{figure}

These three examples all need three colours in a colouring with bounded clustering. The main result of this section is the following converse result. 

\begin{thm}
\label{2Colour}
Let $\GG$ be a minor-closed graph class. Then $\cchi(\GG)\leq 2$ if and only if for some integer $k\geq 2$, the $k$-fan, the $k$-fat path, and the $k$-fat star are not in $\GG$. 
\end{thm}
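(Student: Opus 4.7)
The forward direction is already contained in the three paragraphs preceding the statement: they show that if the class contains arbitrarily large $n$-fans, $n$-fat paths, or $n$-fat stars, then no bounded clustering suffices with just 2 colours. Explicitly, if $\cchi(\GG)\leq 2$ with clustering $c$, then taking any $k>\max\{c^2+c,\,2c-1,\,c\}$ yields a value of $k$ for which all three of the $k$-fan, $k$-fat path, and $k$-fat star are absent from $\GG$.

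For the converse, let $\GG$ be a minor-closed class from which the $k$-fan, $k$-fat path, and $k$-fat star are all absent, for some fixed $k\geq 2$. The plan is to show that every $G\in \GG$ admits a 2-colouring whose clustering is bounded by a function $c(k)$. Let $V_0,V_1,V_2,\ldots$ be a BFS layering of $G$ from a vertex $r$. The key initial observation is that contracting $V_0\cup\cdots\cup V_{i-1}$ to a single vertex $u_i$ yields a minor of $G$ in which $u_i$ is adjacent to every vertex of $V_i$; hence $G[V_i]$ together with a universal vertex lies in $\GG$. From this one deduces local obstructions on each $G[V_i]$: the $k$-fan exclusion forbids a $P_k$-minor in $G[V_i]$, so $G[V_i]$ has tree-depth at most $\log_2 k$; the $k$-fat-star exclusion forbids $k$ pairwise disjoint $K_{1,k}$-minors in $G[V_i]$; and a parallel analysis of $V_i\cup V_{i+1}$ with the predecessor layers contracted yields an obstruction coming from the $k$-fat path.

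With these local constraints in hand, the next step is to 2-colour each layer $V_i$ so that $G[V_i]$ has bounded monochromatic components. The bounded tree-depth of $G[V_i]$, combined with the $k$-fat-star obstruction that limits how many ``fat'' substructures can coexist, makes this feasible: one pulls out a bounded ``hub'' set $X_i\subseteq V_i$, colours the remainder so that its components are small, and chooses the colouring of $X_i$ with care to respect constraints coming from the neighbouring layers.

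The central difficulty, and where the proof really has to work, is in patching the layer-by-layer colourings into a globally consistent 2-colouring of $G$. Monochromatic components of $G$ can propagate across layers via edges joining $V_i$ with $V_{i+1}$, and a naive parity-alternation fails because $G[V_i]$ itself carries edges. The $k$-fat-path exclusion is precisely the structural ingredient needed here: a monochromatic component persisting across many consecutive layers would, together with the BFS edges to the opposite colour class, witness a fat-path minor. Coordinating the within-layer colourings accordingly -- for example by flipping the two colours on odd-indexed layers or by an inductive construction along the layering -- one bounds both the number of layers spanned by any monochromatic component and its size within each layer, yielding a 2-colouring of $G$ with clustering $c(k)$ and completing the converse.
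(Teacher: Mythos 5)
The forward direction is fine and matches the paper. The converse, however, is only an outline; several of its critical steps are unproved, and the single‐global‐BFS plan omits the key structural reduction on which the paper's argument actually rests.

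Two concrete gaps. First, your local observations (each $G[V_i]$ has no $P_k$-minor, and only boundedly many disjoint $K_{1,k}$-minor packings) do not by themselves yield a bounded ``hub'' set $X_i$ in the way you need. They let you hit all large stars \emph{inside} $G[V_i]$ with a bounded set, but they say nothing about vertices of $V_i$ whose high degree points to $V_{i-1}$ and $V_{i+1}$; those vertices are exactly the ones that break a layer-by-layer 2-colouring, and there can be arbitrarily many of them in a single BFS layer without creating a $k$-fan, $k$-fat star, or $k$-fat path (a star-of-stars is a simple example). The paper's proof bounds the number of high-degree vertices \emph{globally} (Lemma~\ref{2ConnectedHigh}), not per layer, and only after first decomposing $G$ into blocks: in a 2-connected $k$-fan/$k$-fat-star/$k$-fat-path-free graph there are at most $(k+2)k^k$ high-degree vertices in total, which makes it legitimate to put all of them in a single colour class. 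That bound is simply false in non-2-connected graphs, and the block decomposition together with the ``properly-coloured root'' refinement (the extra clause in Lemma~\ref{2Connected2Colouring}) is exactly what replaces it.

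Second, the ``patching'' step, which you correctly identify as the crux, is never carried out. You observe that a monochromatic component persisting through many layers should witness a fat-path minor, but you do not show how to choose the within-layer colourings so that this propagation is actually cut off while keeping to two colours. The paper does this by making the \emph{sequence of layers} interact with the coloured high-degree vertices: it colours all high-degree vertices black, alternates colours in the stretches between high-degree layers so that every monochromatic non-high-degree component lies in at most $3(k+2)k^k$ consecutive layers, and then kills those components via the ``meandering path'' argument (Lemma~\ref{MeanderingPath}), which is a $k$-fan (not $k$-fat-path) consequence. In short: the plan as written would need to recognise that the reduction to blocks is essential, that high-degree vertices are globally (not per-layer) few in a block, and that it is the $k$-fan exclusion (via the bounded-degree, bounded-width path argument) rather than the $k$-fat-path exclusion that finishes off the within- and across-layer components.
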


\cref{Quantative2Colour} below shows that every graph containing no $k$-fan minor, no $k$-fat path minor, and no $k$-fat star minor is 2-colourable with clustering $f(k)$ for some explicit function $f$. Along with the above discussion, this implies \cref{2Colour}. We assume $k\geq 2$ for the remainder of this section. 

The following definition is a key to the proof. For an $h$-vertex graph $H$ with vertex set $\{v_1,\dots,v_h\}$, a \emph{$k$-strong $H$-model} in a graph $G$ consists of $h$ pairwise disjoint connected subgraphs $X_1,\dots,X_h$ in $G$, such that for each edge $v_iv_j$ of $H$ there are at least $k$ vertices in $V(G)\setminus \bigcup_{i=1}^h V(X_i)$ adjacent to both $X_i$ and $X_j$. Note that a vertex in $V(G)\setminus \bigcup_{i=1}^h V(X_i)$ might count towards this set of $k$ vertices for distinct edges of $H$. 
This definition leads to the following sufficient condition for a graph to contain a $k$-fat star or $k$-fat path

\begin{lem} 
\label{ModelMinor}
If a graph $G$ contains a $k(k+1)$-strong $H$-model for some connected graph $H$ with $k^k$ edges, 
then $G$ contains a $k$-fat star or a $k$-fat path as a minor. 
\end{lem}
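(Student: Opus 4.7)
The plan is to first extract a structural dichotomy from $H$ and then transfer the structure to $G$ by a greedy selection of connector vertices.

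\emph{Step 1 (structure of $H$).} I would show that every connected graph $H$ with at least $k^k$ edges contains either a vertex of degree at least $k$, or a path $P_k$ on $k$ vertices as a subgraph. Suppose neither holds: then $\Delta(H)\leq k-1$ and every path in $H$ has fewer than $k$ vertices, so any BFS tree rooted in $H$ has depth at most $k-2$. With maximum degree $k-1$, such a tree contains at most $\sum_{i=0}^{k-2}(k-1)^i \leq (k-1)^{k-1}$ vertices, so $|E(H)|\leq \tfrac12(k-1)|V(H)| \leq \tfrac12(k-1)^k < k^k$, a contradiction.

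\emph{Step 2 (fat star case).} If some $v_i$ has $k$ distinct neighbours $v_{j_1},\dots,v_{j_k}$ in $H$, I would build a $k$-fat star minor by choosing, for each $\ell \in\{1,\dots,k\}$, a set $S_\ell$ of $k$ vertices of $V(G)\setminus\bigcup_m V(X_m)$ adjacent to both $X_i$ and $X_{j_\ell}$, with the sets $S_1,\dots,S_k$ pairwise disjoint. This greedy choice succeeds because at step $\ell$ the set of already-used connectors has size at most $k(\ell-1)\leq k(k-1)$, while the edge $v_iv_{j_\ell}$ has at least $k(k+1)$ candidate connectors available, leaving at least $2k\geq k$ of them unused. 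After contracting each of $X_i, X_{j_1},\dots,X_{j_k}$ to a single vertex and discarding the other branch sets and all unused connectors, one obtains a $k$-fat star: the direct edge between the contracted $X_i$ and $X_{j_\ell}$ (which exists by definition of the model) serves as the leaf-to-centre edge, while each of the $k$ vertices of $S_\ell$ becomes a degree-$2$ subdivision vertex between them.

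\emph{Step 3 (fat path case).} Otherwise $H$ contains a path $v_{i_1}v_{i_2}\cdots v_{i_k}$. The same greedy argument, applied to the $k-1$ consecutive edges of this path, produces pairwise disjoint sets of $k$ connectors between consecutive branch sets, and contracting each $X_{i_\ell}$ to a single vertex then yields a $k$-fat path as a minor in exactly the same way.

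The main technical point is the arithmetic of the greedy step, and this is exactly calibrated by the hypothesis: the factor $k(k+1)$ in the strength of the model provides the $2k$-vertex surplus needed at every stage of the selection, which is enough to handle either the $k$ edges incident to $v_i$ in the star case or the $k-1$ edges of a $P_k$ in the path case.
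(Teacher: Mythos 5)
Your overall plan mirrors the paper's proof (extract a $K_{1,k}$ or $P_k$ subgraph from $H$, then greedily select pairwise-disjoint connector sets and contract), and your Step~1 supplies a correct justification for the structural dichotomy that the paper merely asserts. But Step~2 has a genuine gap, and it propagates to Step~3.

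You write that ``the direct edge between the contracted $X_i$ and $X_{j_\ell}$ (which exists by definition of the model) serves as the leaf-to-centre edge.'' This appeals to the wrong definition. An ordinary model (the notion with branch sets $J_x$) does require an edge between $J_x$ and $J_y$ for each $xy\in E(H)$, but a \emph{$k$-strong $H$-model} is a distinct, bespoke notion defined in the paper precisely so as to \emph{not} require such edges: it asks only for at least $k$ common neighbours of $X_i$ and $X_j$ lying outside all branch sets. Indeed, where $k$-strong models are actually produced in the paper (e.g.\ in \cref{FindForest}, where the branch sets are singletons in one side $A$ of a bipartition), there are \emph{no} edges between branch sets at all, so your leaf-to-centre edge may simply not be present. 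The $k$-fat star and $k$-fat path both retain the underlying edge $vw$ in addition to the $n$ degree-$2$ vertices, so this edge must be manufactured, not assumed.

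The fix is small and exactly calibrates the constant $k(k+1)$: select $k+1$ (not $k$) common neighbours per edge of the star or path, contract one of them into one of the two incident branch sets to create the required direct edge, and keep the remaining $k$ as degree-$2$ subdivision vertices. Your greedy arithmetic then becomes, in the star case, $(k+1)(\ell-1)\leq (k+1)(k-1)=k^2-1$ connectors already used at step $\ell$ against $k(k+1)=k^2+k$ available, leaving at least $k+1$ fresh ones — exactly enough. The path case is even looser since it has only $k-1$ edges. With this single adjustment your argument coincides with the paper's proof.
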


\pf{
Use the notation introduced in the definition of $k$-strong $H$-model. Since $H$ is connected with $k^k$ edges, $H$ contains a $k$-vertex path or a $k$-leaf star as a subgraph. Suppose that $(v_1,\dots,v_k)$ is a $k$-vertex path in $H$. For $i=1,2,\dots,k-1$, let $N_i$ be a set of $k+1$ vertices in $$\Big(V(G)\setminus \bigcup_{j=1}^h V(X_j)\Big)\setminus \bigcup_{j=1}^{i-1}N_j,$$ each of which is adjacent to both $X_i$ and $X_{i+1}$. Such a set exists since $X_i$ and $X_{i+1}$ have at least $k(k+1)$ common neighbours in $V(G)\setminus \bigcup_{j=1}^h V(X_j)$. For $i\in[1,k-1]$, contract one vertex of $N_i$ into $X_i$. Then contract each of $X_1,\dots,X_h$ into a single vertex. We obtain the $k$-fat path as a minor in $G$. The case of a $k$-leaf star is analogous. 
}

\begin{lem} 
\label{MakeConnected}
If a connected graph $G$ contains a $(k+2c-2)$-strong $H$-model, for some graph $H$ with $c$ connected components, then $G$ contains a $k$-strong $H'$-model for some connected graph $H'$ with $|E(H')|=|E(H)|$. 
\end{lem}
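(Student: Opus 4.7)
My approach is induction on $c$, the number of components of $H$. When $c=1$ there is nothing to do: take $H'=H$ with the given model, since $k+2c-2=k$. For the inductive step with $c\geq 2$, I will perform a single ``merge'' that reduces the number of components by one while losing only $1$ in strongness per common neighborhood. Concretely, from the $(k+2c-2)$-strong $H$-model I will construct a $(k+2c-3)$-strong $H''$-model where $H''$ has $c-1$ components and $|E(H'')|=|E(H)|$. Since $k+2c-3=(k+1)+2(c-1)-2$, applying the inductive hypothesis with parameters $(k+1,c-1)$ to this new model delivers a $(k+1)$-strong (hence $k$-strong) $H'$-model for some connected $H'$ with $|E(H')|=|E(H)|$, as required.

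To carry out one merge, contract each branch set $X_i$ to a single vertex $u_i$ to obtain a connected auxiliary graph $\bar G$. Choose a pair $(u_a,u_b)$ with $v_a,v_b$ in distinct components of $H$ minimising their distance $d$ in $\bar G$, and fix a shortest $u_a$--$u_b$ path $u_a=z_0,z_1,\dots,z_d=u_b$. By minimality no internal $z_i$ can equal a branch vertex $u_\ell$: in each case ($v_\ell$ in $v_a$'s component, in $v_b$'s component, or in a third component) one obtains a cross-component pair at distance strictly less than $d$. Hence $z_1,\dots,z_{d-1}\in V(G)\setminus \bigcup_i V(X_i)$, and $Y:=X_a\cup\{z_1,\dots,z_{d-1}\}\cup X_b$ is a connected subgraph of $G$. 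Define $H''$ from $H$ by identifying $v_a$ and $v_b$; since these lie in different components of $H$ they share no edge and no common neighbour, so $|E(H'')|=|E(H)|$ and $H''$ has $c-1$ components. The new model is $\{Y\}\cup\{X_i:i\neq a,b\}$.

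The heart of the argument is this sub-claim: for every edge $v_jv_{j'}$ of $H$, the set $P:=\{z_1,\dots,z_{d-1}\}$ meets the common neighbourhood $C_{jj'}$ in at most one vertex. If $z_i\in C_{jj'}$ then $z_i$ is adjacent in $\bar G$ to both $u_j$ and $u_{j'}$, which lie in a common component $H_m$. If $H_m$ is $v_a$'s component, then $(u_j,u_b)$ is a cross-component pair with distance $\leq (d-i)+1$, so minimality of $d$ forces $i\leq 1$, i.e.\ $i=1$. The case $H_m=$ $v_b$'s component is symmetric with $i=d-1$. If $H_m$ is a third component, both bounds apply simultaneously, forcing $d\leq 2$ and again at most one candidate $i$. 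In every case at most one of the $z_i$'s belongs to $C_{jj'}$. Edges of $H''$ incident to the identified vertex correspond uniquely to edges $v_aw$ or $v_bw$ of $H$, and their new common neighbourhoods contain $C_{aw}\setminus P$ or $C_{bw}\setminus P$ respectively; edges of $H''$ away from the identified vertex come directly from edges of $H$. In all cases the common neighbourhood shrinks by at most $1$, so the new model is $(k+2c-3)$-strong.

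The principal obstacle is this last case analysis. A naive bound on the merge cost would be $d-1$, which could be unbounded; the subtle point is that a vertex lying in $C_{jj'}$ is a ``shortcut'' to both endpoints of an edge of $H$, and the minimality of the chosen cross-component pair rigidly restricts where such shortcutting vertices may appear on the shortest path---only at position $1$ or position $d-1$. Once this is established, the inductive cascade immediately produces the required $k$-strong connected $H'$-model.
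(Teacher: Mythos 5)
Your proof is correct, and it takes a genuinely different route than the paper's. Both arguments proceed by induction on $c$ and perform one merge per step, but they differ in how the connecting path is chosen and analysed. The paper fixes explicit common-neighbour sets $N_{ij}$ of size $k+2c-2$, assembles for each component $H_a$ a connected region $G_a := \big(\bigcup_{v_i\in V(H_a)} V(X_i)\big)\cup\big(\bigcup_{v_iv_j\in E(H_a)} N_{ij}\big)$, and then takes a path between two of these regions whose \emph{internal} vertices avoid all of the $G_a$'s; the only vertices removed from any $N_{ij}$ are the two path endpoints $x$ and $y$, giving a loss of $2$ per merge, which exactly accounts for the budget $2c-2$. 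You instead work in the contracted graph $\bar G$, pick a cross-component pair $(u_a,u_b)$ at minimum distance $d$, and use the minimality of $d$ to show that for each edge $v_jv_{j'}$ of $H$ the internal vertices of a shortest $u_a$--$u_b$ path hit the corresponding common neighbourhood at most once (only at position $1$, only at position $d-1$, or nowhere, depending on the component of $v_j,v_{j'}$). This is sharper: you lose only $1$ per merge, and your argument would in fact establish the conclusion starting from a $(k+c-1)$-strong model. You then spend the surplus by applying the inductive hypothesis at $(k+1,c-1)$, which is a valid bookkeeping choice. Both approaches are about equally elementary; the paper's loses a bit more but has a somewhat more direct disjointness argument (no case analysis on positions along a shortest path), while yours is tighter and avoids fixing the sets $N_{ij}$ in advance.
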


\pf{
We proceed by induction on $c\geq 1$. The case $c=1$ is vacuous. Assume $c\geq 2$, and the result holds for $c-1$. 
Let $H_1,\dots,H_c$ be the components of $H$. 
We may assume that $H$ has no isolated vertices. Say $X_1,\dots,X_h$ is a $(k+2c-2)$-strong $H$-model in $G$. 
For each edge $v_iv_j$ in $H$, let $N_{ij}$ be a set of $k+2c-2$ common neighbours of $X_i$ and $X_j$. 
For each component $H_a$ of $H$, note that $(\bigcup_{v_i\in V(H_a)} V(X_i))\cup( \bigcup_{v_iv_j\in E(H_a)} N_{ij})$ induces a connected subgraph in $G$, which we denote by $G_a$. 
Since $G$ is connected, there is a path $P$ between $G_a$ and $G_b$, for some distinct $a,b\in[1,c]$, such that no internal vertex of $P$ is in $G_1\cup \dots\cup G_c$. Note that $P$ might be a single vertex. 
For some edge $v_iv_{i'}$ in $H_a$ and some edge $v_jv_{j'}$ in $H_b$, 
without loss of generality, 
$P$ joins some vertex $x$ in $V(X_i) \cup N_{ii'}$ and some vertex $y$ in $V(X_j) \cup N_{jj'}$. 
Let $H'$ be the graph obtained from $H$ by identifying $v_i$ and $v_j$ into a new vertex $v_0$. 
Now $H'$ has $c-1$ components and  $|E(H')|=|E(H)|$. 
Define $X_0 := X_i \cup X_j \cup P$.
If $x\not\in V(X_i)$ then add the edge between $x$ and $X_i$ to $X_0$. 
Similarly, if $y\not\in V(X_j)$ then add the edge between $y$ and $X_j$ to $X_0$. 
Remove $x$ and/or $y$ from $N_{\alpha\beta}$ for each edge $v_\alpha v_\beta$ of $H'$.
Now $|N_{\alpha\beta}|\geq k + 2(c-1)-2$. 
We obtain a $(k+2(c-1)-2)$-strong $H'$-model in $G$. 
By induction, $G$ contains a $k$-strong $H''$-model for some connected graph $H''$ 
with $|E(H'')|=|E(H)|$. 
}

\begin{lem} 
\label{DisjointModelToMinor}
If a connected graph $G$ contains a $3k^k$-strong $H$-model for some graph $H$ with at least $k^k$ edges, 
then $G$ contains a $k$-fat star or a $k$-fat path as a minor. 
\end{lem}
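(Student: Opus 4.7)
The plan is to combine the two preceding lemmas in the natural way: first use Lemma \ref{MakeConnected} to pass from a strong model of a possibly disconnected graph $H$ to a strong model of a connected graph $H'$ with the same edge count, and then apply Lemma \ref{ModelMinor} to that connected model to extract a $k$-fat star or $k$-fat path minor. The numerical parameter $3k^k$ in the hypothesis should be exactly the budget needed to absorb both the $k(k+1)$ strength required by Lemma \ref{ModelMinor} and the $2c-2$ loss incurred by Lemma \ref{MakeConnected}.

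More concretely, the first step is to trim $H$: since $H$ has at least $k^k$ edges, delete edges to obtain a subgraph $H^*$ with exactly $k^k$ edges, and then delete any isolated vertices of $H^*$. The restriction of the original strong $H$-model to the remaining vertices is still a $3k^k$-strong $H^*$-model, because the defining condition only involves pairs of subgraphs indexed by edges. Since $H^*$ has no isolated vertices, each of its connected components contains at least one edge, so the number of components $c$ of $H^*$ satisfies $c \leq k^k$.

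Second, apply Lemma \ref{MakeConnected} with parameter $k' := k(k+1)$. The hypothesis required there is that the model be $(k' + 2c - 2)$-strong, i.e.\ $(k(k+1) + 2c - 2)$-strong. The key arithmetic check is that
\[
k(k+1) + 2c - 2 \;\leq\; k(k+1) + 2k^k - 2 \;\leq\; 3k^k,
\]
where the second inequality amounts to $k^k \geq k^2 + k - 2$, which holds for every $k \geq 2$ (with equality at $k=2$). So Lemma \ref{MakeConnected}, using that $G$ is connected, produces a $k(k+1)$-strong $H'$-model in $G$ with $H'$ connected and $|E(H')| = |E(H^*)| = k^k$.

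Third and finally, feed this model directly into Lemma \ref{ModelMinor}, whose hypothesis is precisely a $k(k+1)$-strong $H'$-model for a connected $H'$ with $k^k$ edges, yielding a $k$-fat star or $k$-fat path minor of $G$, as required. I do not expect any real obstacle; the only subtlety is the bookkeeping in step one (discarding isolated vertices so that the component count is controlled by the edge count) and the verification that the constant $3k^k$ is large enough to accommodate both of the slacks $k(k+1)$ and $2(c-1)$ simultaneously, which is exactly why the hypothesis is phrased with this somewhat curious constant.
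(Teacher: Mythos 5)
Your proposal is correct and matches the paper's proof essentially verbatim: both reduce $H$ to a subgraph with exactly $k^k$ edges and no isolated vertices, bound the component count $c$ by $k^k$, check that $3k^k \geq k(k+1) + 2c - 2$ for $k\geq 2$, and then chain \cref{MakeConnected} into \cref{ModelMinor}. The only difference is that you spell out the arithmetic check in slightly more detail.
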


\pf{
We may assume that $H$ has exactly $k^k$ edges and has no isolated vertices. 
Say $H$ has $c$ connected components. Then $c\leq k^k$ and $3k^k \geq k^2+k+2c-2$. 
Hence $G$ contains a $(k^2+k+2c-2)$-strong $H$-model. 
The result then follows from \cref{ModelMinor,MakeConnected}. 
}

\begin{lem}
\label{OneHigh}
Let $G$ be a connected graph such that $\deg_G(v)\geq 2\ell k$ for some non-cut-vertex $v$ and integers $k,\ell\geq 1$. Then $G$ contains a $k$-fan as a minor, or $G$ contains a connected subgraph $X$ and $v$ has $\ell$ neighbours not in $X$ and all adjacent to $X$ (thus contracting $X$ gives a $K_{2,\ell}$ minor). 
\end{lem}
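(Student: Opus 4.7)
Let $G' := G - v$; since $v$ is not a cut-vertex, $G'$ is connected, and writing $N$ for the set of neighbours of $v$ we have $|N| \geq 2\ell k$. The plan is to perform a BFS of $G'$ starting from some $u_0 \in N$, producing layers $L_0 = \{u_0\}, L_1, \dots, L_D$, and split into two cases according to how $N$ distributes across these layers.

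If some $L_i$ with $i \geq 1$ contains at least $\ell$ vertices of $N$, set $X := L_0 \cup L_1 \cup \dots \cup L_{i-1}$. This closed BFS ball is connected, lies in $V(G')$ so $v \notin X$, and is disjoint from $L_i$; any $\ell$ vertices of $L_i \cap N$ are then neighbours of $v$ outside $X$, each adjacent (via BFS) to some vertex of $L_{i-1} \subseteq X$. This gives the lemma's second alternative, and contracting $X$ yields a $K_{2,\ell}$ minor.

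Otherwise every layer with $i \geq 1$ contains at most $\ell - 1$ vertices of $N$, and summing $|N| \leq 1 + (\ell - 1)D$ forces $D \geq 2k + 1$ when $\ell \geq 2$ (the $\ell = 1$ case is immediate). Root the BFS tree $T$ at $u_0$, let $n(w) := |N \cap V(T_w)|$ where $T_w$ is the subtree of $T$ at $w$, and perform a greedy descent that at each step moves to a child with largest $n$-value, giving a path $P = w_0 w_1 \dots w_D$ of length $D \geq 2k+1$ in $T$. If $|V(P) \cap N| \geq k$, then contracting the subpaths of $P$ between consecutive $N$-vertices produces $k$ vertices on a path each adjacent to $v$, yielding a $k$-fan minor.

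The remaining case $|V(P) \cap N| < k$ is the main obstacle. Here, telescoping the identity $n(w_i) = [w_i \in N] + n(w_{i+1}) + \sum_c n(c)$ over off-path children $c$ shows that at least $2\ell k - (k-1)$ vertices of $N$ lie in off-path subtrees of $P$. Combining the maximality of the descent, the per-layer bound $|L_i \cap N| \leq \ell - 1$, and $D \geq 2k + 1$, I would argue that either $V(P)$ already has at least $\ell$ neighbours of $v$ outside it (so $X := V(P)$ supplies the lemma's second alternative), or some heavy off-path subtree carries enough of $N$ to permit a short recursive argument inside it that yields a $k$-fan. The crux will be making this final counting rigorous, especially handling $N$-vertices that are buried deep in off-path subtrees rather than direct BFS-children of $V(P)$.
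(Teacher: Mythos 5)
Your Case 1 (some layer $L_i$, $i \geq 1$, contains at least $\ell$ vertices of $N$) is correct and clean. The problem is Case 2, which you explicitly leave unfinished, and the gap is genuine: when $|V(P)\cap N|<k$, the $N$-vertices you locate lie in off-path subtrees of $P$, and there is no reason such a vertex should be adjacent to $V(P)$ --- its only BFS-parent could be arbitrarily deep inside the subtree. So setting $X:=V(P)$ does not give the desired conclusion, and the ``short recursive argument inside a heavy off-path subtree'' is never defined; a na{\"\i}ve recursion would need some new degree hypothesis inside the subtree, which you don't have. There is also a smaller slip earlier: from $|N|\leq 1+(\ell-1)D$ you conclude $D\geq 2k+1$ ``when $\ell\geq 2$'', but $\frac{2\ell k-1}{\ell-1}\geq 2k+1$ only when $\ell\leq 2k$; for $\ell>2k$ that bound fails, so the claimed lower bound on $D$ is not available in general.

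The paper's argument avoids all of this by a different decomposition. Fix a single root $r\in V(G-v)$ and, for each $w\in N_G(v)$, choose one $w$--$r$ path $P_w$ in $G-v$. If some $P_w$ meets $N_G(v)$ in at least $k$ vertices, contracting along $P_w$ gives a $k$-fan with apex $v$. Otherwise build a digraph $H$ on $N_G(v)$ with $N^+_H(w):=V(P_w)\cap N_G(v)$; the maximum out-degree is at most $k-1$, so the underlying graph has average degree at most $2k-2$, and Tur\'an's theorem yields a stable set $S$ of size $\ell$ among the $\geq 2\ell k$ vertices of $N_G(v)$. Then $X:=\bigcup_{w\in S}P_w - S$ is connected (all paths share $r$, and stability of $S$ means no $P_w$ passes through another $S$-vertex), and each $w\in S$ is a neighbour of $v$ outside $X$ adjacent to $X$. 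This sidesteps the ``buried deep'' problem entirely, since every chosen neighbour is the endpoint of its own path into $X$. If you want to repair your BFS route you would need a construction of $X$ that swallows entire off-path subtrees down to the layer just above each selected $N$-vertex, but at that point the argument is close to rebuilding the closed-ball $X$ of your Case 1, and it is unclear how to guarantee $\ell$ selected vertices lying at the frontier of a single connected region without reintroducing the same case split.
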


\pf{
Let $r$ be a vertex of $G-v$. 
For each $w\in N_G(v)$, let $P_w$ be a $wr$-path in $G-v$. 
If $|P_w\cap N_G(v)|\geq k$ for some $w\in N_G(v)$, then $G$ contains a $k$-fan minor. 
Now assume that $|P_w\cap N_G(v)|\leq k-1$ for each $w\in N_G(v)$. 
Let $H$ be the digraph with vertex set $N_G(v)$, where 
$N^+_H(w):= V(P_w) \cap N_G(v)$ for each vertex $w$. 
Thus $H$ has maximum outdegree at most $k-1$, and the underlying undirected graph of $H$ has average degree at most $2k-2$. 
Since $|V(H)| \geq 2\ell k$, by Tur\'an's Theorem, $H$ contains a stable set $S$ of size $\ell$. 
Let $X:=\bigcup\{P_w:w\in S\}-S$, which is connected since $S$ is stable. 
Each vertex in $S$ is adjacent to $v$ and to $X$, as desired. 
}

\begin{lem}
\label{ManyHigh}
Let $G$ be a graph with distinct vertices $v_1,\dots,v_k$, such that $C:=G-\{v_1,\dots,v_k\}$ is 
connected and $\deg_C(v_i)\geq k^3$ for each $i\in[1,k]$. 
Then $G$ contains a $k$-fan or $k$-fat star as a minor.  
\end{lem}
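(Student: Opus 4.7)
The plan is to apply Lemma~\ref{OneHigh} to each apex vertex $v_i$ separately and then to glue the resulting local structures together, using the connectedness of $C$, into a strong $K_{1,k}$-model from which a $k$-fat star minor is extracted by the contraction argument used inside the proof of Lemma~\ref{ModelMinor}. Small values of $k$ (where the parameters pinch) are handled separately by a direct contraction argument inside $C$, using the fact that any two $v_i$-neighbours in the connected graph $C$ can be contracted to be adjacent.

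In more detail: for each $i \in [1,k]$ the subgraph $G_i := G[V(C) \cup \{v_i\}]$ is connected and has $v_i$ as a non-cut vertex (since $G_i - v_i = C$ is connected), and $\deg_{G_i}(v_i) = \deg_C(v_i) \geq k^3 \geq 2\ell k$ for $\ell := \lfloor k^2/2 \rfloor$, so Lemma~\ref{OneHigh} applies with parameter $\ell$. If any of these $k$ invocations produces a $k$-fan minor, we are done; otherwise each produces a connected subgraph $X_i \subseteq C$ together with a set $S_i \subseteq N_C(v_i) \setminus V(X_i)$ of size at least $\ell$ whose vertices are all adjacent to $X_i$. Inspecting the proof of Lemma~\ref{OneHigh} shows that one is free to choose the BFS basepoint $r \in V(C)$ to be the same in all $k$ invocations; this forces $r \in V(X_i)$ for every $i$, so $R := \bigcup_i X_i$ is automatically connected (no Steiner tree is required).

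Now every $s \in S_i$ is a common neighbour of $v_i$ and $R$, because $s \sim v_i$ by $s \in N_C(v_i)$ and $s$ is adjacent to $X_i \subseteq R$ by the output of Lemma~\ref{OneHigh}. Provided $|S_i \setminus V(R)| \geq k+1$ for each $i$, the tuple $(R, \{v_i\}_i)$ together with these common neighbours realises a $(k+1)$-strong $K_{1,k}$-model in $G$. The contraction argument from the proof of Lemma~\ref{ModelMinor} (which only uses that the host graph of the model contains a $k$-leaf star, and $K_{1,k}$ does) then yields the desired $k$-fat star minor: for each leaf, contract one common neighbour into $R$ to create the direct $R$-$v_i$ edge required by the star, and use the remaining $k$ common neighbours per leaf as the degree-2 vertices of the fat star.

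The main obstacle is establishing $|S_i \setminus V(R)| \geq k+1$: the potential losses come from $S_i$-vertices that happen to lie inside some $X_j$ with $j \neq i$. With $|S_i| \geq \ell = \Omega(k^2)$, the cubic hypothesis $\deg_C(v_i) \geq k^3$ is intended to provide enough slack to absorb such losses, but making this rigorous requires a careful choice of the paths in Lemma~\ref{OneHigh} (e.g., taking them to be shortest paths from the common basepoint $r$ and controlling their cross-overlap) together with an iterative Tur\'an-style refinement of each $S_i$ to avoid the previously-constructed $X_j$. Controlling this overlap is the technical crux of the argument, and it is the step I expect to require the most care.
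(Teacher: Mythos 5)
Your proposal correctly assembles the right ingredients~--- \cref{OneHigh} for each apex $v_i$, a strong star model, extraction of a fat star via the \cref{ModelMinor} contraction~--- and you are right to single out the overlap control as the crux; but this is a genuine gap, not merely a technical loose end. The subgraphs $X_j$ returned by \cref{OneHigh} carry no size bound at all (each is a union of up to $\ell$ arbitrary $r$--paths through $C$), so after the first invocation $X_1$ could already contain all of $N_C(v_2)$, leaving $S_2\setminus V(R)$ empty; the cubic slack between $k^3$ and $\ell=\lfloor k^2/2\rfloor$ is useless against an unbounded opponent, and the suggested Tur\'an-style refinement never gets started once $V(R)\supseteq N_C(v_2)$. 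There is also a second, independent quantitative problem you do not mention: a vertex can lie in $N_C(v_i)\cap N_C(v_j)$, so the surviving $S_i\setminus V(R)$ need not be pairwise disjoint, yet the fat-star extraction requires disjoint degree-$2$ sets for the $k$ edges. Disjointifying via the \cref{ModelMinor} argument needs a $k(k+1)$-strong model, but $\ell\leq k^2/2<k(k+1)$, so the arithmetic does not close even if the overlap issue were solved.

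The paper's proof avoids both problems by not running \cref{OneHigh} separately for each $v_i$. It builds a single subtree $X\subseteq C$ and the sets $S_i$ simultaneously by a greedy algorithm ordered by decreasing BFS depth from a fixed root $r$: at each step it picks an unused neighbour $w$ of some deficient $v_i$ in the current deepest layer and attaches to $X$ a strictly upward path $P$ from $w$; because $P$ climbs towards $r$ and every previously placed $S_j$-vertex sits at depth at least the current level as a leaf of $X$, the path $P$ never touches $\bigcup_j S_j$. This makes the $S_i$ pairwise disjoint and keeps them as leaves of $X$~--- the two things your approach cannot guarantee. If some $P$ meets $\geq k$ neighbours of a single $v_j$, a $k$-fan minor appears instead. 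The cubic hypothesis is spent on the counting invariant $|N_{C-V(X)}(v_i)|\geq k^3-1-(k-1)|S|$, where $|S|\leq k(k+1)$, which stays positive until every $|S_i|$ reaches $k+1$. So the resolution is a coordinated construction, not a post hoc cleanup of $k$ independent ones; I do not see how to repair your version without essentially reinventing that coordination.
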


\pf{
The idea of the proof is to attempt to build a $k$-fan model by constructing a subtree $X$ such that each $v_i$ is adjacent to a subset $S_i$ of $k$ leaves of $X$ (where the $S_i$ are disjoint).  We construct $X$ and the $S_i$ by adding, one at a time, paths to some neighbour $w$ of some $v_i$ to increase the size of $S_i$.  We always choose a neighbour at maximal distance from some root vertex, among all neighbours of all $v_i$ for which $S_i$ is not yet large enough: this ensures that later paths will not pass through the sets $S_i$ that have been previously constructed.

We now formalise this idea. 
Let $r$ be a vertex in $C$. 
Let $V_0,V_1,\dots,V_n$ be a BFS layering of $C$ starting at $r$. 
Initialise $t:=n$ and $X:=\{r\}$ and $S_i:=\emptyset$ for $i\in[1,k]$ and $S:=\emptyset$. 
The following properties trivially hold:

(0) $S=\bigcup_{i\in[1,k]} S_i$ and $S\subseteq V_t\cup V_{t+1}\cup\dots\cup V_n$.\\
(1) $X$ is a (connected) subtree of $C$ rooted at $r$ with (non-root) leaf set $S$.\\
(2) $S_i\cap S_j=\emptyset$ for distinct $i,j\in[1,k]$. \\
(3) $S_i$ is a set of at most $k+1$ neighbours of $v_i$ for $i\in[1,k]$ (and so $|S|\leq k(k+1)$). \\
(4) $|N_{C-V(X)}(v_i)| \geq k^3 -1 - (k-1) |S| > 0$ for $i\in[1,k]$. 

Now execute the following algorithm, which maintains properties (0) -- (4). Think of $V_t$ as the `current' layer. 

While $|S_i| \leq k$ for some $i\in[1,k]$ repeat the following:  
If $V_t \cap N_{C-V(X)}(v_i) =\emptyset$ for all $i\in[1,k]$ with $|S_i|\leq k$, then let $t:=t-1$. 
Properties (0) -- (4) are trivially maintained. 
Otherwise, let $w$ be a vertex in $V_t \cap N_{C-V(X)}(v_i)$ for some $i\in[1,k]$ with $|S_i|\leq k$. 
Since $V_0,V_1,\dots,V_n$ is a BFS layering of $C$ rooted at $r$ and $r$ is in $X$, 
there is a path $P$ from $w$ to $X$ consisting of at most one vertex from each of $V_0,\dots,V_t$, and with no internal vertices in $X$. 
By (0) and since $w\not\in S$,  $P$ avoids $S$. 
By (1), the endpoint of $P$ in $X$ is not a leaf of $X$. 
If $P$ contains at least $k$ vertices in $N_C(v_j)$ for some $j\in[1,k]$, then $G$ contains a $k$-fan minor and we are done. 
Now assume that $P$ contains at most $k-1$ vertices in $N_C(v_j)$ for each $j\in[1,k]$. 
Let $S_i:=S_i\cup\{w\}$ and $S:=S\cup\{w\}$ and $X:=X\cup P$. 
Now $w$ is a leaf of $X$, and property (1) is maintained. 
Properties (0), (2) and (3) are maintained by construction. 
Property (4) is maintained since $|S|$ increases by $1$ and $P$ contains at most $k-1$ vertices in $N_C(v_j)$ for each $j\in[1,k]$. 

The algorithm terminates when $|S_i|=k+1$ for each $i\in[1,k]$. Delete $C-V(X)$. Contract $X-S$ (which is connected by (1)) to a single vertex $z$. Since $S$ is the set of leaves of $X$, each vertex in $S_i$ is adjacent to both $v_i$ and $z$. Contract one edge between $v_i$ and $S_i$ for each $i\in[1,k]$. We obtain the $k$-fat star as a minor. 
}

\begin{lem}
\label{FindForest}
Let $G$ be a bipartite graph with bipartition $A,B$, such that at least $p$ vertices in 
$A$ have degree at least $k|A|$, and every vertex in $B$ has degree at least 2. 
Then $G$ contains a $k$-strong $H$-model for some graph $H$ with at least $p/2$ edges. 
\end{lem}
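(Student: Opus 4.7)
The plan is to use the trivial model in which each vertex of $A$ serves as its own branch set. That is, set $X_v := G[\{v\}]$ for every $v\in A$; these are $|A|$ pairwise disjoint, trivially connected subgraphs whose union is exactly $A$, so the ``witness'' common neighbours live in $V(G)\setminus A = B$. With this choice, the graph $H$ that we produce will have vertex set (a subset of) $A$, and the defining condition of a $k$-strong $H$-model translates into: for each edge $v_iv_j$ of $H$, the vertices $v_i$ and $v_j$ share at least $k$ common neighbours in $B$. This makes the obvious choice: let $H$ be the graph on vertex set $A$ in which $v_iv_j$ is an edge precisely when $|N_G(v_i)\cap N_G(v_j)|\geq k$. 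Then the model is automatically $k$-strong, and the entire task reduces to bounding $|E(H)|\geq p/2$.

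For the edge count, I would show that every vertex $v\in A$ with $\deg_G(v)\geq k|A|$ (of which there are at least $p$ by hypothesis) has $\deg_H(v)\geq 1$. Once this is done, summing over the high-degree vertices gives $\sum_{v\in V(H)}\deg_H(v)\geq p$, and the handshake lemma yields $|E(H)|\geq p/2$. To prove the degree claim, I fix such a $v$ and count pairs $(b,v')$ with $b\in N_G(v)$ and $v'\in N_G(b)\setminus\{v\}$. The hypothesis that every vertex in $B$ has degree at least $2$ guarantees each $b\in N_G(v)$ contributes at least one such pair, so there are at least $|N_G(v)|\geq k|A|$ pairs in total, distributed among only $|A|-1$ possible values of $v'$. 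A pigeonhole argument then produces some $v'\in A\setminus\{v\}$ with $|N_G(v)\cap N_G(v')|\geq \lceil k|A|/(|A|-1)\rceil \geq k+1\geq k$, whence $vv'\in E(H)$.

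There is essentially no hard step here; the whole lemma is a pigeonhole calculation once the right model is chosen. The only subtle point is that the pigeonhole denominator is $|A|-1$ and not $|A|$, which is exactly why the hypothesis takes the form ``degree $\geq k|A|$'' rather than something weaker: the crucial inequality $k|A|/(|A|-1)>k$ is what allows us to conclude a common-neighbour count of at least $k$ (in fact $k+1$). The boundary case $|A|=1$ is automatically vacuous, since then no vertex of $B$ could have degree $\geq 2$, so $p$ must be $0$ and we simply take $H$ to be the empty graph on $A$.
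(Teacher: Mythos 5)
Your proof is correct and takes essentially the same route as the paper: define $H$ on vertex set $A$ with an edge whenever two vertices share at least $k$ common neighbours in $B$, observe that the singleton branch sets then form a $k$-strong $H$-model by construction, and use the handshake lemma after showing each high-degree vertex is non-isolated in $H$. The only difference is that you spell out the pigeonhole/double-counting argument (totalling at least $k|A|$ pairs over at most $|A|-1$ bins) that the paper leaves implicit in the sentence ``Since every vertex in $B$ has degree at least 2, every vertex in $A$ with degree at least $k|A|$ is incident with some edge in $H$.''
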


\pf{
Let $H$ be the graph with $V(H):=A$ where $vw\in E(H)$ whenever $|N_G(v)\cap N_G(w)| \geq k$. 
Since every vertex in $B$ has degree at least 2, 
every vertex in $A$ with degree at least $k|A|$ is incident with some edge in $H$. 
Thus $H$ has at least $p/2$ edges. 
By construction, $G$ contains a $k$-strong $H$-model.}

For the remainder of this section, let $d:= (k+2)k^k ( 18 k^{2k+1}+1 )$. 
A vertex $v$ is \emph{high-degree} if $\deg(v)\geq d$, otherwise $v$ is \emph{low-degree}. 

\begin{lem}
\label{2ConnectedHigh}
If a 2-connected graph $G$ has at least $(k+2) k^k$ high-degree vertices, 
then $G$ contains a $k$-fat path, a $k$-fat star, or a $k$-fan as a minor. 
\end{lem}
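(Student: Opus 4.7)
The plan is to apply \cref{FindForest} (with parameter $3k^k$) to a suitable bipartite graph derived from $G$, obtain a $3k^k$-strong $H$-model in $G$ for some graph $H$ with $|E(H)| \geq k^k$, and then invoke \cref{DisjointModelToMinor} to produce a $k$-fat star or $k$-fat path as a minor in $G$. I argue by contradiction, assuming $G$ contains neither a $k$-fan, nor a $k$-fat path, nor a $k$-fat star as a minor.

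Set $p := (k+2)k^k$. Choose a subset $A \subseteq V(G)$ of exactly $p$ high-degree vertices, let $B := V(G) \setminus A$, and let $B' := \{w \in B : |N_G(w) \cap A| \geq 2\}$. Form the bipartite graph $G^\star$ with parts $A$ and $B'$ and edges the $G$-edges between $A$ and $B'$; by construction, every vertex of $B'$ has degree at least $2$ in $G^\star$. The goal is to show that at least $2k^k$ vertices of $A$ have degree at least $3k^k \cdot |A| = 3(k+2)k^{2k}$ in $G^\star$: then \cref{FindForest}, applied with parameter $3k^k$, yields a $3k^k$-strong $H$-model in $G$ with $|E(H)| \geq k^k$, and since $G$ is $2$-connected (hence connected), \cref{DisjointModelToMinor} yields the desired minor, a contradiction.

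For each $v \in A$, the degree of $v$ in $G^\star$ equals $|N_G(v) \cap B| - |P(v)|$, where $P(v) := N_G(v) \cap (B \setminus B')$ consists of $v$'s \emph{private neighbors} (those having $v$ as their unique neighbor in $A$); since $\deg_G(v) \geq d$, one has $|N_G(v) \cap B| \geq d - p + 1 = 18(k+2)k^{3k+1} + 1$. The main obstacle is handling the ``private-heavy'' vertices $v \in A$ with $|P(v)| > \tau := 18(k+2)k^{3k+1} + 1 - 3(k+2)k^{2k}$. For such a $v$, I consider the connected component $C$ of $v$ in $G - (A \setminus \{v\})$: all of $v$'s private neighbors lie in $C$ and $\deg_C(v) \geq d - p + 1$. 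If $v$ is not a cut-vertex of $C$, I apply \cref{OneHigh} to $C$; otherwise, to $G[\{v\} \cup V(D)]$ where $D$ is the component of $C - v$ carrying the most neighbors of $v$ (so that $v$ is no longer a cut-vertex there). This produces either a $k$-fan minor in $G$ (contradicting the assumption, so we are done) or a connected subgraph $X \subseteq B$ together with many common $B$-neighbors of $v$ and $X$. Aggregating these $K_{2,\ell}$-type witnesses across private-heavy vertices, and combining them with the already-high bipartite degrees of the remaining $A$-vertices, yields a $k$-strong model of a graph with at least $k^k$ edges, to which \cref{DisjointModelToMinor} applies. The delicate combinatorial step is this aggregation: the constant $18k^{2k+1}$ in the definition of $d$ is calibrated precisely to provide enough slack at each stage, so that the contributions from private-heavy and private-light vertices together suffice to meet the thresholds required by \cref{FindForest}.
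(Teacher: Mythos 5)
Your proposal shares the same toolkit (\cref{FindForest}, \cref{OneHigh}, \cref{DisjointModelToMinor}) but reorganises the argument around a different bipartite graph, and this reorganisation creates a genuine gap that is left unaddressed.

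In the paper, the bipartite graph has the set $A$ on one side and the \emph{components} of $G-A$ on the other, so a high-degree vertex $v\in A$ keeps a high degree there as long as its edges into $B$ are spread across many components; the case in which $v$ concentrates $\geq 6k^{k+1}$ edges into a single component is siphoned off via the notion of a heavy pair, and the critical sub-case in which $k$ or more $A$-vertices each concentrate edges into the \emph{same} component is killed by \cref{ManyHigh}. Your bipartite graph $G^\star$ (with $B'$ on one side) makes the ``degree $\geq 2$'' hypothesis of \cref{FindForest} automatic and so never invokes $2$-connectivity, but the price is that a vertex $v\in A$ all of whose neighbours are private has $G^\star$-degree zero, i.e.\ is private-heavy. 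You then apply \cref{OneHigh} separately to each private-heavy $v$ inside the component of $v$ in $G-(A\setminus\{v\})$ and promise to ``aggregate'' the resulting $K_{2,\ell}$-type witnesses. This is where the argument breaks. If many private-heavy vertices $v_1,\dots,v_m$ all have their private neighbourhoods inside the same component $C_j$ of $G-A$ (which can certainly happen --- e.g.\ when $G-A$ is connected and every $B$-vertex has a unique $A$-neighbour, so $B'=\emptyset$ and \emph{every} $v\in A$ is private-heavy), then the connected subgraphs $X_{v_1},\dots,X_{v_m}$ produced by \cref{OneHigh} all live inside $C_j$ and can overlap arbitrarily. Nothing you say makes them disjoint, so you cannot assemble them into a strong $H$-model: the hypothesis of \cref{DisjointModelToMinor} requires the branch sets to be pairwise disjoint. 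This is precisely the case the paper dispatches via \cref{ManyHigh} (if at least $k$ vertices of $A$ each send $\geq 6k^{k+1}\geq k^3$ edges into the same component then a $k$-fan or $k$-fat star minor is already present), a lemma your proposal never calls. Without a replacement for \cref{ManyHigh} or a disjointness mechanism (such as choosing at most one private-heavy witness per component of $G-A$, as the paper does for heavy pairs), the ``aggregation'' step is not merely delicate but missing an essential case.
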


\pf{
Let $A$ be a set of exactly $(k+2) k^k$ high-degree vertices in $G$.
Let $C_1,\dots,C_p$ be the components of $G-A$. 
Say $(v,C_j)$ is a \emph{heavy pair} if $v\in A$ and $v$ has at least $6k^{k+1}$ neighbours in $C_j$. 
Since $6k^{k+1}\geq k^3$, by \cref{ManyHigh}, if some $C_j$ is in at least $k$ heavy pairs, then $G$ contains a $k$-fan or $k$-fat star as a minor, and we are done. 
Now assume that each $C_j$ is in fewer than $k$ heavy pairs. 
Let $h$ be the total number of heavy pairs. 
Then there is a set $P$ of at least $h/k$ heavy pairs containing at most one heavy pair for each component $C_j$. 
For each such heavy pair $(v,C_j)$, by \cref{OneHigh} with $\ell=3k^k$, 
$G[ V(C_j)\cup\{v\} ]$ contains a $k$-fan as a minor (and we are done) or a $K_{2,3k^k}$ minor, where $G[\{v\}]$ is the subgraph corresponding to one of the vertices in the colour class of size 2 in $K_{2,3k^k}$. 
We obtain a $3k^k$-strong $H$-model for some graph $H$, where $|E(H)|=|P|\geq h/k$. 
If $h/k\geq k^k$, then we are done by \cref{DisjointModelToMinor}. 
Now assume that $h<k^{k+1}$.
In particular, the number of vertices in $A$ that are in a heavy pair is less than $k^{k+1}$.
Let $A'$ be the set of vertices in $A$ in no heavy pair; thus $|A'| \geq 2k^k$. 
Let $H$ be the bipartite graph with bipartition $A,B$, where there is one vertex $w_j$ in $B$ for each component $C_j$, 
and $v\in A$ is adjacent to $w_j\in B$ if and only if $v$ is adjacent to some vertex in $C_j$. 
In $H$, every vertex in $A'$ has degree at least $(d-|A|)/6k^{k+1}$, which is at least $3k^k |A|$. 
(Note that $d$ is defined so that this property holds.)\ 
Since $G$ is 2-connected, each $C_j$ is adjacent to at least two vertices in $A$. 
Thus every vertex in $B$ has degree at least 2 in $H$. 
By \cref{FindForest}, $H$ contains a $3k^k$-strong model of a graph with at least $|A'|/2 \geq k^k$ edges. 
By \cref{DisjointModelToMinor} we are done. 
}

\begin{lem}
\label{MeanderingPath}
Let $V_0,V_1,\dots$ be a BFS layering in a connected graph $G$. If $G[V_i\cup V_{i+1}\cup\dots\cup V_{i+c}]$ contains a path on at least $k^{c+1}$ vertices for some $i,c\geq 0$, then $G$ contains a $k$-fan minor. 
\end{lem}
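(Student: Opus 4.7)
My plan is to induct on $c\geq 0$. The base case $c=0$ is essentially immediate: a path $P$ on $k$ vertices inside a single BFS layer $V_i$ becomes the path part of a $k$-fan once I contract $V_0\cup\cdots\cup V_{i-1}$ (connected by the BFS layering property) to a single apex, which is adjacent to every vertex of $P$ since each vertex of $V_i$ has a neighbour in $V_{i-1}$; the degenerate case $i=0$ forces $k\leq 1$, which is trivial.

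For the inductive step, I would let $i^*$ be the minimum layer index that $P$ meets, and let $s$ be the number of vertices of $P$ in $V_{i^*}$. These $s$ vertices split $P$ into at most $s+1$ subpaths, each lying in $V_{i^*+1}\cup\cdots\cup V_{i+c}$, a slab of $i+c-i^*\leq c$ layers. If some subpath has at least $k^{i+c-i^*}$ vertices I invoke induction on the strictly smaller parameter $i+c-i^*-1<c$. Otherwise every subpath has at most $k^c-1$ vertices, and substituting into $k^{c+1}\leq s+(s+1)(k^c-1)$ forces $s\geq k$.

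Given $s\geq k$, I would label the first $k$ such vertices $w_1,\dots,w_k$ in their order along $P$ and absorb each intermediate $P$-segment into the preceding $w_j$, obtaining $k$ pairwise disjoint connected branch sets $B_1,\dots,B_k$ with consecutive sets joined by a $P$-edge. Since $P$ avoids $V_j$ for $j<i^*$, the branch sets are disjoint from $V_0\cup\cdots\cup V_{i^*-1}$; contracting the latter (which is connected) to a single vertex produces an apex adjacent to every $w_j$ via its BFS edge into $V_{i^*-1}$, exhibiting a $k$-fan minor.

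The only subtlety is the edge case $i^*=0$: then $V_{i^*}=\{r\}$ forces $s\leq 1$, so for $k\geq 2$ the second alternative of the inductive step is unavailable, and I must fall back on the first alternative by observing that one of the subpaths in $V_1\cup\dots\cup V_c$ must contain $\geq k^c$ vertices, allowing induction to apply. I expect the main obstacle to be the bookkeeping in the inductive step — confirming the subpaths really lie in the correct slab, that the induction parameter strictly decreases in each case, and that the branch-set construction stays disjoint from the contracted apex — but no single step seems to require a new idea beyond the BFS layering property and the standard ``contract the lower layers'' trick used already in \cref{Heart}.
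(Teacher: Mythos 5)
Your proof is correct and follows essentially the same strategy as the paper's: induct on the slab width~$c$, either find $k$ path vertices in the lowest layer and contract the lower layers to an apex, or else find a long subpath in a strictly narrower slab. The only variation is that you track the actual minimum layer $i^*$ met by $P$ (and treat $i^*=0$ separately), whereas the paper simply works with $V_i$; this is sound but adds bookkeeping that the paper's proof avoids, since $P - V_i$ having at most $k$ components already forces a component of size at least $k^c$ when $|P\cap V_i|\leq k-1$.
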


\pf{
We proceed by induction on $c$. Let $P$ be a path in $G[V_i\cup V_{i+1}\cup \dots\cup V_{i+c}]$ on $k^{c+1}$ vertices. First suppose that $P$ contains $k$ vertices $v_1,\dots,v_k$ in $V_i$ (which must happen in the base case $c=0$). Each vertex $v_i$ has a neighbour in $V_{i-1}$. Thus, contracting $G[V_0\cup\dots\cup V_{i-1}]$ into a single vertex and contracting $P$ between $v_i$ and $v_{i+1}$ to an edge (for $i\in[1,k-1]$) gives a $k$-fan minor. Now assume that $P$ contains at most $k-1 $ vertices in $V_i$ and $c\geq 1$. Thus $P-V_i$ has at least $k^{c+1}-(k-1)$ vertices and at most $k$ components. Thus some component of $P-V_i$ has at least $\ceil{(k^{c+1}-k+1)/k}=k^c$ vertices and is contained in $G[V_{i+1}\cup V_{i+2}\cup\dots\cup V_{i+c}]$. By induction, $G$ contains a $k$-fan minor. 
}


Say a vertex $v$ in a coloured graph is \emph{properly} coloured if no neighbour of $v$ gets the same colour as $v$.

\begin{lem}
\label{2Connected2Colouring}
Let $G$ be a 2-connected graph containing no $k$-fan, $k$-fat star or $k$-fat path as a minor. Let $h$ be the number of high-degree vertices in $G$. Let $r$ be a vertex in $G$. Then $G$ is 2-colourable with clustering at most $d^{k^{3(k+2)k^k}}$. Moreover, if $h=0$ then we can additionally demand that $r$ is properly coloured.
\end{lem}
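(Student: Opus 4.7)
The approach is to BFS-layer $G$ from $r$, producing layers $V_0 = \{r\}, V_1, V_2, \dots$, and 2-colour $G$ by the parity of its layer. Since BFS guarantees no edge of $G$ between $V_i$ and $V_j$ for $|i-j|\geq 2$, every monochromatic component under this 2-colouring is simply a connected component of some $G[V_i]$, and the root $r$ sits alone in $V_0$ with all its neighbours in $V_1$, hence is automatically properly coloured. So the task reduces to uniformly bounding $|V(C)|$ for every component $C$ of every $G[V_i]$.

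I will prove this size bound by induction on $h$, the number of high-degree vertices of $G$ (which by \cref{2ConnectedHigh} satisfies $h < (k+2)k^k$, since otherwise one of the forbidden minors exists). The base case $h=0$ is clean: every vertex has degree less than $d$, and by \cref{MeanderingPath} applied with $c=0$ (using the absence of a $k$-fan minor), $G[V_i]$ is $P_k$-free, so each component $C$ has diameter at most $k-2$; combined with $\Delta(C) < d$, this yields $|V(C)| \leq d^{k-1}$, well within the target, and establishes the moreover claim as explained above.

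For the inductive step, let $C$ be a component of $G[V_i]$ and let $v$ be a high-degree vertex contained in $V(C)$. The plan is to bound the number of neighbours of $v$ inside $V(C)$ by a polynomial in $k$: contract $V_0\cup\dots\cup V_{i-1}$ to a single vertex $u$, giving a $K_{2,m}$-like structure between $\{u,v\}$ and the $m$ neighbours of $v$ in $V_i\cap V(C)$. If $m$ is large, a combination of \cref{OneHigh,ManyHigh,ModelMinor,DisjointModelToMinor} should extract a $k$-strong $H$-model of a sufficiently dense connected graph $H$, yielding one of the forbidden $k$-fat star, $k$-fat path, or $k$-fan minors and so a contradiction. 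With $\deg_C(v)$ bounded by a function of $k$ (morally, by $k^3$), removing $v$ leaves at most that many components of $C-v$, each containing strictly fewer high-degree vertices of $G$, so the inductive hypothesis gives each component size at most $d^{k^{3(h-1)+1}}$; recombining yields $|V(C)| \leq 1 + k^3\cdot d^{k^{3(h-1)+1}} \leq d^{k^{3h+1}}$, and after at most $(k+2)k^k$ peeling steps we recover the stated bound $d^{k^{3(k+2)k^k}}$.

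The hardest part will be the forbidden-minor extraction around a single high-degree vertex. A large $K_{2,m}$ around $v$ is by itself none of the three forbidden minors, so one must couple the bipartite fan around $v$ with structure either from $v$'s neighbours inside $V_i$ (possibly themselves high-degree, invoking \cref{ManyHigh}) or from previously peeled-off vertices, to build a genuine $k$-strong $H$-model that \cref{DisjointModelToMinor} can promote to a $k$-fat star or $k$-fat path minor. Coordinating this extraction with the BFS layering --- in particular, guaranteeing that the strong $H$-model lies inside the contracted subgraph spanned by $V_{\leq i}$ --- is the principal technical obstacle. It is exactly this overhead that forces the large constant $d=(k+2)k^k(18k^{2k+1}+1)$ defined just before the lemma, which must be sized to absorb all these structural losses and keep the inductive exponent growth to a factor of $k^3$ per step.
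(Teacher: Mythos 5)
Your base case $h=0$ matches the paper exactly: BFS-layer, colour by parity, use the absence of a $k$-fan minor (via \cref{MeanderingPath} with $c=0$) plus the degree bound $d$ to cap component sizes within a layer, and note $r$ is properly coloured. That part is fine.

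The inductive step, however, hinges on a claim that is not true and that the paper never attempts to prove: that a high-degree vertex $v$ lying in a component $C$ of $G[V_i]$ has $\deg_C(v)$ bounded by a polynomial in $k$. Contracting $V_0\cup\dots\cup V_{i-1}$ to a single vertex $u$ and looking at $v$'s $m$ neighbours in $V_i$ gives only a $K_{2,m}$ with parts $\{u,v\}$ and the neighbours, and $K_{2,m}$ is not a $k$-fan, a $k$-fat star, or a $k$-fat path for any $m$; a stable neighbourhood of $v$ inside $V_i$ produces no forbidden minor at all. Lemmas \cref{OneHigh,ManyHigh,ModelMinor,DisjointModelToMinor} all require structure you do not have here (pairwise common neighbourhoods of several disjoint subgraphs, several high-degree vertices hanging off a single connected $C$, etc.), and 2-connectedness alone does not manufacture it. The number of high-degree vertices is bounded, but not their in-layer degree. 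Beyond that, the recursion is structurally ill-posed: it asks you to delete $v$ from $C$ (not from $G$), and the resulting pieces are not 2-connected graphs with BFS layerings, so the induction hypothesis does not apply to them in the form stated.

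The paper avoids this entirely by not using pure-parity colouring once $h\geq 1$. Instead, it colours all high-degree vertices black, colours all low-degree vertices sharing a layer with a high-degree vertex white, and alternates black/white inside each maximal run of high-degree-free layers in a way that both endpoints of the run are white. Then every black monochromatic component is either all-high-degree (so of size at most $h< (k+2)k^k$) or confined to a single layer (where the degree-bound-plus-$k$-fan argument bounds its size by $d^k$), and every white monochromatic component lives in at most $3h\leq 3(k+2)k^k$ consecutive layers, so \cref{MeanderingPath} bounds it by $d^{k^{3(k+2)k^k}}$. No bound on the in-layer degree of a high-degree vertex is ever needed; the whole point of grouping those vertices into one colour is to sidestep that. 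To repair your proof you would need to replace the degree-bounding step with this (or an equivalent) re-colouring scheme; as written, the inductive step fails.
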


\pf{
Let $V_0,V_1,\dots$ be the BFS layering of $G$ starting at $r$.

First suppose that $h=0$. Colour each vertex $v\in V_i$ by $i\bmod{2}$. 
Then $r$ is properly coloured. Every monochromatic component is contained in some $V_i$. 
Suppose that some component $X$ of $G[V_i]$ has at least $d^k$ vertices. Thus $i\geq 1$. 
Since $G$ and thus $X$ has maximum degree at most $d$, 
$X$ contains a path of $k$ vertices. Contracting $G[V_0\cup\dots\cup V_{i-1}]$ into a single vertex gives a $k$-fan minor.
This contradiction shows that the 2-colouring has clustering at most $d^k$.

Now assume that $h\geq 1$. By \cref{2ConnectedHigh}, $h \leq (k+2) k^k$. Colour all the high-degree vertices black. Let $I$ be the set of integers $i\geq 0$ such that $V_i$ contains a high-degree vertex. Colour all the low-degree vertices in $\bigcup\{V_i:i\in I\}$ white. 

Let $V_i,V_{i+1},\dots,V_{i+c}$ be a maximal sequence of layers with no high-degree vertices, where $c\geq 0$. 
Thus $V_{i-1}$ is empty or contains a high-degree vertex. Similarly,  $V_{i+c+1}$ is empty or contains a high-degree vertex. If $c$ is even, then 
colour $V_i\cup V_{i+2}\cup \dots\cup V_{i+c}$ white and 
colour  $V_{i+1}\cup V_{i+3}\cup \dots\cup V_{i+c-1}$ black. 
If $c$ is odd, then 
colour $V_i\cup V_{i+2}\cup \dots\cup V_{i+c-1}$ and $V_{i+c}$ white, and
colour $V_{i+1}\cup V_{i+3}\cup \dots\cup V_{i+c-2}$ black.
Note that if $c\geq 2$ then at least one of $V_{i+1},\dots,V_{i+c-1}$ is black. 

We now show that each black component $X$ has bounded size. 
If $X$ contains some high-degree vertex, then every vertex in $X$ is high-degree and  $|X|\leq h\leq (k+2) k^k$. 
Now assume that $X$ contains no high-degree vertices. 
Say $X$ intersects $V_j$. 
Since each black layer is preceded by and followed by a white layer,  $X$ is contained in $V_j$. 
Every vertex in $X$ has degree at most $d$ in $G$. 
Thus if $X$ has at least $d^k$ vertices, then $X$ contains a path of length $k$, 
and contracting $V_0\cup\dots\cup V_{j-1}$ to a single vertex gives a $k$-fan. 
Hence $X$ has at most $d^k$ vertices.

Finally, let $X$ be a white component. Then $X$ is contained within at most $3h \leq 3(k+2) k^k$ consecutive layers (since in the notation above, if all of $V_i,V_{i+1},\dots,V_{i+c}$ are white, then $c\leq 1$). 
Suppose that $|X| \geq d^{k^{3(k+2)k^k}}$. 
Since $X$ has maximum degree at most $d$, $X$ contains a path of length $k^{3(k+2)k^k}$. 
Thus, \cref{MeanderingPath} with $c+1= 3(k+2)k^k$ implies that  $G$ contains a $k$-fan minor. 
Hence $|X| \leq d^{k^{3(k+2)k^k}}$. 
}

We now complete the proof of \cref{2Colour}.

\begin{lem}
\label{Quantative2Colour}
Let $G$ be a graph containing no $k$-fan, no $k$-fat path, and no $k$-fat star as a minor.  
Then $G$ is 2-colourable with clustering $k d^{k^{3(k+2)k^k}}$.
\end{lem}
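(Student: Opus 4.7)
My plan is to lift \cref{2Connected2Colouring} from 2-connected graphs to all of $G$ via the block-cut tree.  Assume $G$ is connected (otherwise colour each component separately).  Let $T$ be the block-cut tree of $G$, rooted at an arbitrary block $B_0$, and for each non-root block $B$ let $c_B$ be the unique cut vertex of $B$ lying closer to the root in $T$.  I would process the blocks of $G$ in pre-order.  For $B_0$, apply \cref{2Connected2Colouring} with an arbitrary root.  For each subsequent block $B$, the colour of $c_B$ has already been fixed by a previously-processed block, so apply \cref{2Connected2Colouring} to $B$ with $r:=c_B$, invoking the moreover clause whenever $B$ contains no high-degree vertex; if the resulting colour of $c_B$ disagrees with its pre-assigned value, swap the two colours inside $B$.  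This produces a single 2-colouring of $G$ whose restriction to every block has clustering at most $C:=d^{k^{3(k+2)k^k}}$.

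Call a block \emph{heavy} if it contains a high-degree vertex, and \emph{light} otherwise.  The crucial structural observation is that for every non-root light block $B$, the moreover clause forces $c_B$ to be properly coloured inside $B$.  Since $V(B)\setminus\{c_B\}$ is separated from the rest of $G$ by $c_B$, this implies that whenever a global monochromatic component $M$ meets $V(B)\setminus\{c_B\}$, all of $M$ must lie inside $V(B)$; in particular $|M|\leq C$.  Therefore a component $M$ with $|M|>C$ can use a light block $B$ only to visit the single vertex $c_B$, and a simple counting argument yields $|M|\leq |\mathcal{B}_M^{\mathrm{heavy}}|\cdot C$, where $\mathcal{B}_M^{\mathrm{heavy}}$ denotes the set of heavy blocks meeting $M$ (together with the root block if it is light).

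It remains to bound $|\mathcal{B}_M^{\mathrm{heavy}}|\leq k$, which I anticipate to be the main obstacle.  If $|\mathcal{B}_M^{\mathrm{heavy}}|\geq k+1$, one can extract a chain or branch of heavy blocks $B_1,\dots,B_{k+1}$ in $T$ linked by cut vertices $v_1,\dots,v_k\in M$, together with an $M$-coloured path $P_i$ inside each $B_i$ joining its boundary cut vertices, and a high-degree vertex $x_i\in V(B_i)$ for each $i$.  In the spirit of \cref{OneHigh}, \cref{ManyHigh} and \cref{MeanderingPath}, I would combine the long $M$-coloured spine $P_1\cup\cdots\cup P_{k+1}$ with the many neighbours of each $x_i$ to build a model of a $k$-fan, a $k$-fat star, or a $k$-fat path in $G$, contradicting the hypothesis.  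The delicate step is coordinating the neighbourhoods of the $x_i$ across block boundaries so that, after contracting along the tree, the resulting branches remain internally disjoint; once achieved, the clustering of the 2-colouring of $G$ is at most $kC$, as required.
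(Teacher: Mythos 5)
Your framework matches the paper's: decompose into blocks, apply \cref{2Connected2Colouring} to each block with its root cut vertex as $r$, permute colours to agree at cut vertices, and observe that the ``moreover'' clause makes light blocks trap any monochromatic component that enters them past the root cut vertex, so that a large component can only accumulate vertices inside heavy blocks (and the root). The reduction to bounding $|\mathcal{B}_M^{\mathrm{heavy}}|$ is sound.

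The gap is exactly where you flag it, and your sketch for closing it does not work as stated. The paper does not bound the number of heavy blocks a component meets by chaining high-degree vertices along the block tree. Instead, it defines a \emph{high-degree pair} $(B,v)$ to mean that the cut vertex $v$ itself has degree at least $d$ inside $B$, and shows that no vertex lies in $k$ or more high-degree pairs: if $v$ has high degree in blocks $B_1,\dots,B_k$, then \cref{OneHigh} (with $\ell=k+1$) gives, inside each $B_i-v$, a connected subgraph $X_i$ together with $k+1$ neighbours of $v$ that are all adjacent to $X_i$, and since the $B_i$ meet pairwise only at $v$, these $k$ gadgets are internally disjoint and assemble directly into a $k$-fat star centred at $v$. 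The whole argument turns on the fact that the high degree sits \emph{at the cut vertex}, so the gadgets from the different blocks are already anchored at a common vertex. Your sketch instead takes ``a high-degree vertex $x_i\in V(B_i)$'' for each heavy block $B_i$; but $x_i$ need not be a cut vertex, need not be anywhere near the monochromatic path $P_i$, and the only interface between $B_i$ and $B_{i+1}$ is a single cut vertex. You would have to route from $x_i$'s large neighbourhood to that cut vertex inside $B_i$ while keeping the branches disjoint across $\geq k+1$ blocks, and nothing in the lemmas you cite (\cref{OneHigh,ManyHigh,MeanderingPath}) provides that coordination. Indeed, the ``delicate step'' you identify is essentially the whole remaining content of the proof, and it is avoided, not solved, by the paper's choice to count high-degree pairs at a single cut vertex rather than heavy blocks along a chain. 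To repair your proposal, replace the attempted bound on $|\mathcal{B}_M^{\mathrm{heavy}}|$ with the paper's per-vertex high-degree-pair bound, together with the observation that a large monochromatic component has a unique cut vertex closest to the root, and is confined to that vertex's parent block plus its children in which it fails to be properly coloured.
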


\pf{
We may assume that $G$ is connected. Let $r$ be a vertex of $G$. 
If $B$ is a block of $G$ containing $r$, then consider $B$ to be rooted at $r$. 
If $B$ is a block of $G$ not containing $r$, then consider $B$ to be rooted at the unique vertex in $B$ that separates $B$ from $r$. 
Say $(B,v)$ is a \emph{high-degree pair} if $B$ is a block of $G$ and $v$ has high-degree in $B$. 
Note that one vertex might be in several high-degree pairs. 

Suppose that some vertex $v$ is in at least $k$ high-degree pairs with blocks $B_1,\dots,B_k$. 
Since $d\geq 2k(k+1)$, by  \cref{OneHigh} with $\ell=k+1$, 
for $i\in[k]$, there is a connected subgraph $X_i$ in $B_i-v$ and there is a set $N_i\subseteq N_{B_i}(v)\setminus V(X_i)$ of size $k+1$, such that each vertex in $N_i$ is adjacent to $X_i$. 
For $i\in[1,k]$, contract $X_i$ into a single vertex, and contract one edge between $v$ and $N_i$.
We obtain a $k$-fat star as a minor. 
Now assume that each vertex is in fewer than $k$ high-degree pairs.

Colour each block $B$ in non-decreasing order of the distance in $G$ from $r$ to the root of $B$. 
Let $B$ be a block of $G$ rooted at $v$ (possibly equal to $r$). 
Then $v$ is already coloured in the parent block of $B$. 
Let $h_B$ be the number of high-degree pairs involving $B$. 
By \cref{2Connected2Colouring}, $B$ is 2-colourable with clustering at most $d^{k^{3(k+2)k^k}}$, 
such that if $h_B=0$ then $v$ is properly coloured. 
Permute the colours in $B$ so that the colour assigned to $v$ matches the colour assigned to $v$ by the parent block. 
Then the monochromatic component containing $v$ is contained within the parent block of $B$ along with those blocks rooted at $v$ that form a high-degree pair with $v$. As shown above, there are at most $k$ such blocks. 
Thus each monochromatic component has at most $kd^{k^{3(k+2)k^k}}$ vertices. 
}

\section{Excluding a Fat Star}
\label{FatStarSection}

This section considers colourings of graphs excluding a fat star. 
We need the following more general lemma.

\begin{lem}
\label{PlanarDefectiveClustered}
For every planar graph $H$, 
$$\cchi(\MM_H)\leq 2\,\dchi(\MM_H).$$ 
\end{lem}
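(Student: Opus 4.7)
The strategy rests on two ingredients. First, since $H$ is planar, $H$ is a minor of some grid, and hence by the excluded grid theorem of Robertson--Seymour every graph in $\MM_H$ has treewidth at most some constant $w = w(H)$. Second, a theorem of Ding and Oporowski says that every graph of treewidth at most $w$ and maximum degree at most $d$ admits a \emph{tree-partition} of width at most some $c = c(w,d)$; that is, a tree $T$ together with a partition $\{B_t : t \in V(T)\}$ of the vertex set into parts of size at most $c$, such that the endpoints of every edge lie either in a common bag or in adjacent bags of $T$.

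Given these, the argument is short. Set $k := \dchi(\MM_H)$ and fix $d = d(H)$ such that every graph in $\MM_H$ admits a $k$-colouring with defect $d$. For $G \in \MM_H$, take such a defective colouring $V_1, \dots, V_k$. Each subgraph $G_i := G[V_i]$ has maximum degree at most $d$ and, as a subgraph of $G$, treewidth at most $w$, so by Ding--Oporowski it has a tree-partition $\{B_t^i : t \in V(T_i)\}$ of width at most $c$. Root each $T_i$ and split $V_i$ into two colour classes according to the parity of the depth (in $T_i$) of the bag containing the vertex. Use a fresh pair of colours for each $i$ to obtain a $2k$-colouring of $G$.

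To bound the clustering, consider two adjacent vertices $u,v$ of the same colour. They lie in $V_i$ for a common $i$, and the bags $B_{t(u)}^i$ and $B_{t(v)}^i$ have depths of the same parity in $T_i$. In a tree-partition, however, adjacent vertices lie in a common bag or in bags joined by an edge of $T_i$ (necessarily of opposite parity), so $t(u) = t(v)$. Hence every monochromatic component lies inside a single bag and has size at most $c$. This gives $\cchi(\MM_H) \leq 2k = 2\,\dchi(\MM_H)$.

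The main technical point is invoking the Ding--Oporowski tree-partition theorem (planarity of $H$ enters only through the treewidth bound). Once this is used, the doubling of colours is a clean way to combine a defective colouring (which controls maximum degrees inside colour classes) with a tree-partition (which bipartitions each class with bounded clustering). An improvement below the factor $2$ would presumably require a colouring coupled more intrinsically to the tree-partition structure of $G$ itself, rather than applied colour class by colour class.
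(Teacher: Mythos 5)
Your argument is correct and takes essentially the same route as the paper: both combine the Robertson--Seymour grid theorem (planarity of $H$ gives bounded treewidth) with the fact, due to Alon, Ding, Oporowski and Vertigan, that graphs of bounded treewidth and bounded maximum degree are $2$-colourable with bounded clustering, applied to each of the $\dchi(\MM_H)$ colour classes of a defective colouring. The only difference is cosmetic: you re-derive that $2$-colouring step from the Ding--Oporowski tree-partition theorem, whereas the paper cites the ADOV result directly (itself proved via the same tree-partition idea).
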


\pf{
The grid minor theorem of \citet{RS-V} says that every graph in $\MM_H$ has tree-width at most some function $w(H)$. 
(\citet{CC16} recently showed that $w$ can be taken to be polynomial in $|V(H)|$.)\ 
\citet*{ADOV03} observed that every graph with tree-width $w$ and maximum degree $\Delta$ is 2-colourable with clustering $24w\Delta$. 
Let $k:=\dchi(\MM_H)$. 
That is, every $H$-minor-free graph $G$ is $k$-colourable with monochromatic components of maximum degree at most some function $d(H)$. 
Apply the above result of \citet{ADOV03} to each monochromatic component. 
Thus $G$ is $2k$-colourable with clustering $24\,w(H)\,d(H)$. 
Hence $\cchi(\MM_H) \leq 2k$. 
}

A variant of \cref{PlanarDefectiveClustered} holds for arbitrary graphs $H$ with ``2'' replaced by ``3''. 
The proof uses a result of \citet{LO17} in place of the result of \citet{ADOV03}; see \citep{EKKOS15,vdHW}. 

\begin{thm}
\label{FatStar}
For $k\geq 3$, the clustered chromatic number of the class of graphs containing no $k$-fat star minor equals 4. 
\end{thm}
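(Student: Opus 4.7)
The plan is to establish the equality via matching lower and upper bounds; both fall out quickly from results already proved. Throughout this sketch let $F_k$ denote the $k$-fat star, so $F_k=\Closure{3,k}$.

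For the lower bound $\cchi(\MM_{F_k})\geq 4$, I will reduce to the $k=3$ case of Lemma \ref{TernaryTree}. For every $k\geq 3$ the $3$-fat star $F_3=\Closure{3,3}$ is an induced subgraph of $F_k=\Closure{3,k}$: keep three of the $k$ rays at the root, and three of the $k$ degree-$2$ vertices on each such ray. Therefore $\MM_{F_3}\subseteq \MM_{F_k}$, and monotonicity of $\cchi$ under class inclusion together with Lemma \ref{TernaryTree} at $k=3$ gives
$$\cchi(\MM_{F_k})\;\geq\;\cchi(\MM_{F_3})\;\geq\;2\cdot 3-2\;=\;4.$$

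For the upper bound $\cchi(\MM_{F_k})\leq 4$, I will combine Lemma \ref{PlanarDefectiveClustered} with the theorem of Ossona de Mendez, Oum and Wood (quoted in the introduction) which establishes $\dchi(\MM_H)\leq\ctd(H)-1$ whenever $\ctd(H)\leq 3$. Two easy structural facts about $F_k$ are needed. First, $F_k$ is planar: place the centre $v$ at the origin, distribute the leaves $w_1,\ldots,w_k$ evenly on a circle around it, and within each angular sector containing $vw_i$ draw the $k$ degree-$2$ subdivision vertices as nested $vw_i$-arcs (so each sector realises a planar $K_{2,k}$ sharing the vertex $v$). Second, $\ctd(F_k)=3$, witnessed by the rooted tree with $v$ as root, the $w_i$ as its children, and each degree-$2$ subdivision vertex attached as a child of the $w_i$ it neighbours; on the other hand $F_k$ is not a star (it has cycles), so $\ctd(F_k)\geq 3$. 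Hence $\dchi(\MM_{F_k})\leq 2$, and Lemma \ref{PlanarDefectiveClustered} yields $\cchi(\MM_{F_k})\leq 2\cdot 2=4$.

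The argument has no serious obstacle: both structural facts about $F_k$ (planarity and $\ctd(F_k)=3$) are elementary, and everything else is assembly of results already established in the paper. The only mildly interesting point is that the planarity of $F_k$ is exactly what lets us apply Lemma \ref{PlanarDefectiveClustered} rather than the weaker ``factor $3$'' variant mentioned after it, which would only give an upper bound of $6$.
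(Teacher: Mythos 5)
Your proof is correct and follows essentially the same route as the paper: the upper bound comes from combining the Ossona de Mendez--Oum--Wood defective colouring result for graphs of connected tree-depth $3$ with Lemma \ref{PlanarDefectiveClustered} (exploiting the planarity of the fat star), and the lower bound comes from Lemma \ref{TernaryTree} applied to $\Closure{3,3}$ together with the monotonicity of $\cchi$ under class inclusion (which you spell out and the paper uses implicitly). The only content the paper adds that you omit is an appeal to Leaf--Seymour to get an explicit $O(k^{15})$ clustering bound, which is not needed for the statement of the theorem itself since the Grid Minor Theorem already gives bounded treewidth inside the proof of Lemma \ref{PlanarDefectiveClustered}.
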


\pf{
As illustrated in \cref{ThreeExamples}, the $k$-fat star is planar. \citet{OOW} proved that graphs containing no $k$-fat star minor are 2-colourable with defect $O(k^{13})$. Thus, \cref{PlanarDefectiveClustered} implies that the clustered chromatic number of the class of graphs containing no $k$-fat star is at most 4.  To obtain a bound on the clustering, note that a result of \citet{LeafSeymour15} implies that every graph containing no $k$-fat star minor has tree-width $O(k^2)$. It follows from the proof of \cref{PlanarDefectiveClustered} that every graph containing no $k$-fat star minor is 4-colourable with clustering $O(k^{15})$.  Since the $3$-fat star is \Closure{3,3}, \cref{TernaryTree} implies that for $k\geq 3$, 
the clustered chromatic number of the class of graphs containing no $k$-fat star minor is at least 4. 
}

Every graph $H$ with $\ctd(H)\leq 3$ is a subgraph of the $k$-fat star for some $k\leq |V(H)|$. 
Thus \cref{FatStar} implies  \cref{tdConjecture} in the case of connected tree-depth 3.

\begin{cor}
\label{td3}
For every graph $H$ with $\ctd(H)\leq 3$, 
$$\cchi(\MM_H)\leq  4.$$ 
\end{cor}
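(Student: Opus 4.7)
The plan is to deduce \cref{td3} from \cref{FatStar} by embedding $H$ as a subgraph of a sufficiently large fat star. First I would record the general principle that if $H$ is a subgraph of some graph $H'$, then any $G$ containing $H'$ as a minor also contains $H$ as a minor, so $\MM_H\subseteq\MM_{H'}$, and hence $\cchi(\MM_H)\leq\cchi(\MM_{H'})$ directly from the definition of clustered chromatic number.

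Next I would show that every graph $H$ with $\ctd(H)\leq 3$ is a subgraph of the $k$-fat star $\Closure{3,k}$ for $k:=\max\{3,|V(H)|\}$. By definition of connected tree-depth, $H$ is a subgraph of the closure of some rooted tree $T$ of depth at most $3$. I would extend $T$ to a rooted tree $T'$ of depth exactly $3$ by attaching pendant descendants at shallower leaves (so every root-to-leaf path has length $3$), and then adding further children to every non-leaf vertex of $T'$ until each has exactly $k$ children. This turns $T'$ into the complete $k$-ary tree $\CompleteTree{3,k}$, and since we only added vertices and edges, the closure of the extended tree contains the closure of $T$ as a subgraph. Therefore $H$ is a subgraph of $\Closure{\CompleteTree{3,k}}=\Closure{3,k}$, which is precisely the $k$-fat star.

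Combining the two steps with \cref{FatStar} yields
$$\cchi(\MM_H)\leq\cchi(\MM_{\Closure{3,k}})\leq 4,$$
as claimed. The only mild subtlety is ensuring $k\geq 3$ so that \cref{FatStar} applies; this is handled by the choice $k=\max\{3,|V(H)|\}$, and when $|V(H)|\leq 2$ the graph $H$ is trivially a subgraph of $\Closure{3,3}$. There is no substantive obstacle here, as the heavy lifting was already performed in \cref{FatStar} (and ultimately in the defective-colouring result of \citet{OOW} for fat-star-minor-free graphs invoked inside it).
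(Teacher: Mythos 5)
Your proof is correct and is essentially the paper's argument: the paper also observes that every $H$ with $\ctd(H)\leq 3$ is a subgraph of a $k$-fat star for some $k\leq|V(H)|$ and then invokes \cref{FatStar}. Your extra care in taking $k=\max\{3,|V(H)|\}$ to meet the hypothesis $k\geq 3$ of \cref{FatStar} is a minor but valid tidying of the same route.
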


We can push this result further. 

\begin{thm}
\label{td3+}
For every graph $H$ with $\td(H)\leq 3$, 
$$\cchi(\MM_H)\leq 5.$$
\end{thm}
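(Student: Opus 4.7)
The plan is to reduce to excluding a disjoint union of fat stars, apply a planar Erd\H{o}s--P\'osa statement to remove a bounded set $X$ after which the remainder is fat-star-minor-free, and then finish using \cref{FatStar}.

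First, if $\ctd(H)\leq 3$ then \cref{td3} already gives $\cchi(\MM_H)\leq 4\leq 5$, so we may assume $\ctd(H)=4$. As recalled in the introduction, this forces $H$ to have $p\geq 2$ connected components $H_1,\dots,H_p$, each of connected tree-depth at most $3$. Consequently each $H_i$ is a subgraph of $\Closure{3,|V(H)|}$, which is the $k$-fat star $S_k$ for $k:=\max\{3,|V(H)|\}$. Hence $H$ is a subgraph of $p\cdot S_k$, and so every $H$-minor-free graph is $(p\cdot S_k)$-minor-free.

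The crux is the following planar Erd\H{o}s--P\'osa statement: for every planar graph $J$ and every integer $p\geq 1$ there is a constant $f(J,p)$ such that every graph $G$ with no $p\cdot J$ minor admits a set $X\subseteq V(G)$ with $|X|\leq f(J,p)$ and $G-X$ being $J$-minor-free. I would prove this by a tree-width dichotomy: since $J$ is planar it embeds in some grid, so the grid-minor theorem of \citet{RS-V} provides a constant $w=w(J,p)$ such that every graph of tree-width at least $w$ contains $p$ vertex-disjoint copies of $J$ as a minor (via a large enough subgrid); otherwise $G$ has tree-width at most $w-1$ and \cref{ErdosPosa} applied to $J$ with $c=1$ yields the required $X$ of size at most $p(w-1)$.

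Applying this statement with $J=S_k$, which is planar, gives $X\subseteq V(G)$ of bounded size such that $G-X$ excludes $S_k$ as a minor. By \cref{FatStar}, $G-X$ is $4$-colourable with clustering at most some constant $c_0=c_0(k)$. Assigning colour $5$ to every vertex of $X$ then yields a $5$-colouring of $G$ whose monochromatic components have at most $\max\{c_0,\,f(S_k,p)\}$ vertices, establishing $\cchi(\MM_H)\leq 5$. The main obstacle is the planar Erd\H{o}s--P\'osa statement itself: \cref{ErdosPosa} as stated requires bounded tree-width, so the grid-minor theorem seems essentially unavoidable in the high tree-width case. A tempting alternative using \cref{DeVos} to partition $V(G)$ into two bounded tree-width parts and then applying \cref{ErdosPosa} piecewise does not work cleanly, because a set hitting all $S_k$-minors inside each part need not hit those $S_k$-minors whose branch sets cross the partition.
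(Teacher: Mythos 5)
Your proposal is correct and follows essentially the same route as the paper: exclude a disjoint union of fat stars, bound the tree-width via the grid minor theorem (since the disjoint union is planar), apply the bounded-tree-width Erd\H{o}s--P\'osa lemma (\cref{ErdosPosa}) to obtain a bounded set $X$ after which no fat star minor remains, colour $G-X$ with $4$ colours via \cref{FatStar}, and spend a fifth colour on $X$. Your packaging of the tree-width step as a dichotomy and your side remark about why \cref{DeVos} would not substitute are fine, but do not amount to a different argument; note also that your preliminary reduction to the $\ctd(H)=4$ case is unnecessary, as the paper's argument handles all components uniformly.
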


\pf{
%
Say $H$ has $p$ components. 
Each component of $H$ is a subgraph of the $k$-fat star for some $k\leq |V(H)|$. 
Let $H'$ consist of $p$ pairwise disjoint copies of the $k$-fat star. 
Let $G$ be an $H$-minor-free graph. Thus $G$ is also $H'$-minor-free. 
By the Grid Minor Theorem of \citet{RS-V} and since $H'$ is planar, 
$G$ has treewidth at most $w=w(H')$. 
By \cref{ErdosPosa}, there is a set $X$ of at most $(p-1)(w-1)$ vertices in $G$, 
such that  $G-X$ contains no $k$-fat star as a minor. 
By \cref{FatStar}, $G-X$ is 4-colourable with clustering at most some function of $H$. 
Assign vertices in $X$ a fifth colour. 
Thus $G$ is 5-colourable with clustering at most some function of $H$. 
}

\section{A Conjecture about Clustered Colouring}
\label{Conjecture}

We now formulate a conjecture about the clustered chromatic number of an arbitrary minor-closed class of graphs.  Consider the following recursively defined class of graphs. Let $\HH_{1,c}:=\{P_{c+1},K_{1,c}\}$. Here $P_{c+1}$ is the path with $c+1$ vertices, and $K_{1,c}$ is the star with $c$ leaves. As illustrated in \cref{Xkc}, for $k\geq 2$, let $\HH_{k,c}$ be the set of graphs obtained by the following three operations. For the first two operations, consider an arbitrary graph $G\in \HH_{k-1,c}$. 
\begin{itemize}
\item Let $G'$ be the graph obtained from $c$ disjoint copies of $G$ by adding one dominant vertex. Then $G'$ is in $\HH_{k,c}$. 
\item Let $G^+$ be the graph obtained from $G$ as follows: for each $k$-clique $D$ in $G$, add a stable set of $k(c-1)+1$ vertices complete to $D$. Then $G^+$ is in $\HH_{k,c}$. 
\item If $k\geq 3$ and $G\in \HH_{k-2,c}$, then let $G^{++}$ be the graph obtained from $G$ as follows: 
for each $(k-1)$-clique $D$ in $G$, add a path of $(c^2-1)(k-1)+(c+1)$ vertices complete to $D$. Then $G^{++}$ is in $\HH_{k,c}$. 
\end{itemize}

\begin{figure}[h]
\centering
\includegraphics{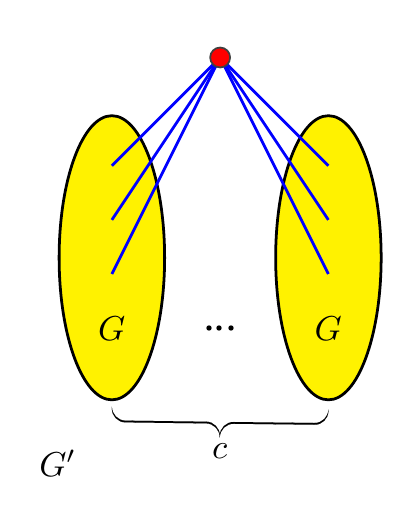}
\quad
\quad
\includegraphics{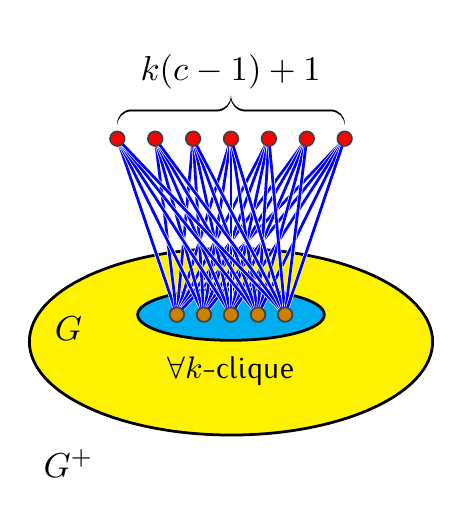}
\quad
\quad
\includegraphics{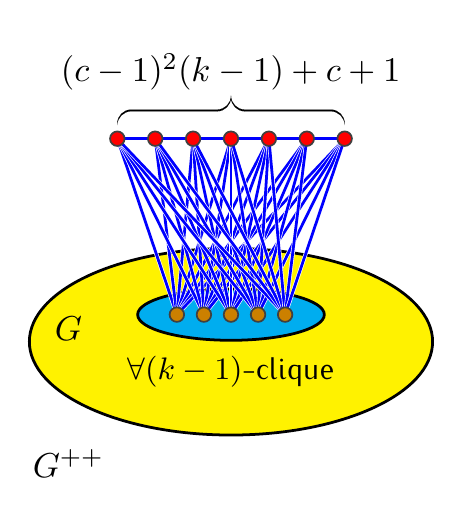}
\caption{\label{Xkc} Construction of $\HH_{k,c}$.}
\end{figure}

A vertex-coloured graph is \emph{rainbow} if every vertex receives a distinct colour. 

\begin{lem}
\label{Rainbow}
For every $c\geq 1$ and $k\geq 2$, for every graph $G\in\HH_{k,c}$, 
every colouring of $G$ with clustering $c$ contains a rainbow $K_{k+1}$.
In particular, no graph in $\HH_{k,c}$ is $k$-colourable with clustering $c$. 
\end{lem}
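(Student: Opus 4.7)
The plan is to prove the claim by induction on $k\geq 1$; the ``in particular'' clause then follows immediately, since any $k$-colouring uses only $k$ colours and thus cannot realise a rainbow $K_{k+1}$. For the base case $k=1$, both $P_{c+1}$ and $K_{1,c}$ are connected on $c+1$ vertices, so under clustering $c$ they cannot be monochromatic; hence some edge is bichromatic, giving a rainbow $K_2$. For the inductive step I treat the three construction operations in turn.

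For $G'$, obtained from $c$ disjoint copies of some $G_0\in\HH_{k-1,c}$ by adding a dominant vertex $v$: by induction each copy contains a rainbow $K_k$. If $v$'s colour is missing from the colour-set of the rainbow $K_k$ in some copy, then adjoining $v$ yields a rainbow $K_{k+1}$. Otherwise every copy contains a vertex of $v$'s colour, each adjacent to $v$, producing a monochromatic component of size at least $c+1$, contradicting the clustering.

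For $G^+$, obtained from $G_0\in\HH_{k-1,c}$ by attaching a stable set $S$ of size $k(c-1)+1$ complete to some $k$-clique: by induction there is a rainbow $K_k$ on a clique $D$ with colour-set $C$ of size $k$, and such an $S$ is attached to $D$. If some $s\in S$ carries a colour outside $C$, then $D\cup\{s\}$ is a rainbow $K_{k+1}$. Otherwise every vertex of $S$ uses a colour from $C$; pigeonhole then forces some colour $c_j\in C$ onto at least $c$ vertices of $S$, which together with the vertex $d_j\in D$ of colour $c_j$ form a monochromatic component of size at least $c+1$ (since all of $S$ is joined to $d_j$), contradicting the clustering.

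For $G^{++}$, obtained from $G_0\in\HH_{k-2,c}$ by attaching a path $P$ of $(c^2-1)(k-1)+(c+1)$ vertices complete to some $(k-1)$-clique: induction yields a rainbow $K_{k-1}$ on a clique $D$ with colour-set $C$ of size $k-1$. Call a vertex of $P$ \emph{bad} if its colour lies in $C$, else \emph{good}; the aim is to exhibit two consecutive good vertices of $P$ with distinct colours, since $D$ together with such a pair forms a rainbow $K_{k+1}$. Each bad vertex of colour $c_j\in C$ lies in the monochromatic component of $d_j$, so at most $c-1$ bad vertices can carry any fixed colour of $C$, giving at most $(k-1)(c-1)$ bad vertices in $P$. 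If no two consecutive good vertices had distinct colours, then the good vertices would form monochromatic runs in $P$ separated by bad vertices; each such run is itself a monochromatic component of $G$ and so has size at most $c$, and there are at most $(k-1)(c-1)+1$ runs, yielding a total of at most $(k-1)(c-1)+c\bigl((k-1)(c-1)+1\bigr)=(c^2-1)(k-1)+c$ vertices in $P$, strictly less than $|P|=(c^2-1)(k-1)+(c+1)$, a contradiction. The main obstacle is precisely this calibration in the third case: the path length has been chosen so that the worst-case count falls exactly one short, which is what forces two consecutive good vertices of distinct colours.
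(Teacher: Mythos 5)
Your proof is correct and takes essentially the same approach as the paper: induction on $k$, handling the three operations $G'$, $G^+$, $G^{++}$ by finding a rainbow clique in the base graph via induction and extending it by one vertex (or two adjacent ones for $G^{++}$), with the clustering bound forcing a vertex of a new colour. The only cosmetic difference is in the $G'$ case, where the paper first locates a copy avoiding $v$'s colour and then invokes induction, whereas you invoke induction in all copies and then argue about the colour sets; both are sound.
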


\pf{
We proceed by induction on $k\geq 1$. In the case $k=1$, every colouring of $P_{c+1}$ or $K_{1,c}$ with clustering $c$ contains an edge whose endpoints receive distinct colours, and we are done. Now assume the claim for $k-1$ and for $k-2$ (if $k\geq 3$). 

Let $G\in \HH_{k-1,c}$. Consider a colouring of $G'$ with clustering $c$. Say the dominant vertex $v$ is blue. At most $c-1$ copies of $G$ contain a blue vertex. Thus, some copy of $G$ has no blue vertex. By induction, this copy of $G$ contains a rainbow $K_k$. With $v$ we obtain a rainbow $K_{k+1}$. 

Now consider a colouring of $G^+$ with clustering $c$. 
By induction, the copy of $G$ in $G^+$ contains a clique $w_1,\dots,w_k$ receiving distinct colours. Let $S$ be the set of $k(c-1)+1$ vertices adjacent to $w_1,\dots,w_k$ in $G^+$. At most $c-1$ vertices in $S$ receive the same colour as $w_i$. 
Thus some vertex in $S$ receives a colour distinct from the colours assigned to $w_1,\dots,w_k$. Hence $G^+$ contains a rainbow $K_{k+1}$. 


Now suppose $k\geq 3$ and  $G\in \HH_{k-2,c}$. Consider a colouring of $G^{++}$ with clustering $c$. 
By induction, the copy of $G$ in $G^{++}$ contains a clique $w_1,\dots,w_{k-1}$ receiving distinct colours. 
Let $P$ be the path of $(c^2-1)(k-1)+(c+1)$ vertices in $G^{++}$ complete to $w_1,\dots,w_{k-1}$. 
Let $X_i$ be the set of vertices in $P$ assigned the same colour as $w_i$, and let $X:=\bigcup_iX_i$. 
Thus $|X_i|\leq c-1$ and $|X|\leq (c-1)(k-1)$. 
Hence $P-X$ has at most $(c-1)(k-1)+1$ components, and $|V(P-X)| \geq (c^2-1)(k-1)+(c+1)- (c-1)(k-1) = c\big( (c-1)(k-1)+1 \big)+1$. 
Some component of $P-X$ has at least $c+1$ vertices, and therefore contains a bichromatic edge $xy$.
Then $\{w_1,\dots,w_{k-1}\}\cup\{x,y\}$ induces a rainbow $K_{k+1}$ in $G^{++}$. 
}

We conjecture that a minor-closed class that excludes every graph in $\HH_{k,c}$ for some $c$ is $k$-colourable with bounded clustering. More precisely:

\begin{conj}
\label{MinorConjecture}
For every minor-closed class $\MM$ of graphs, 
$$\cchi(\MM)=\min\{k:\exists c\; \MM\cap \HH_{k,c}=\emptyset\}.$$
\end{conj}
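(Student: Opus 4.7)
The plan is to prove the two inequalities separately. Set $m := \min\{k : \exists c,\, \MM \cap \HH_{k,c} = \emptyset\}$. The lower bound $\cchi(\MM) \ge m$ is immediate from \cref{Rainbow}: if $\cchi(\MM) = k_0$ then there exists $c_0$ such that every $G \in \MM$ is $k_0$-colourable with clustering $c_0$, and by \cref{Rainbow} no graph in $\HH_{k_0,c_0}$ admits such a colouring, forcing $\MM \cap \HH_{k_0,c_0} = \emptyset$; hence $m \le k_0$.

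For the harder direction $\cchi(\MM) \le m$, the plan is strong induction on $k := m$. The base $k=1$ is straightforward: $\MM \cap \HH_{1,c} = \emptyset$ means graphs in $\MM$ exclude both $P_{c+1}$ and $K_{1,c}$ as minors. The first bounds the longest path by $c$; the second forces maximum degree $<c$ (since a vertex of degree $\ge c$ witnesses a $K_{1,c}$-minor). So graphs in $\MM$ have bounded size and are trivially $1$-colourable with bounded clustering. The case $k=2$ is essentially \cref{2Colour}, since the three members of $\HH_{2,c}$ correspond (up to minor-equivalence) to the fan, the fat path, and the fat star.

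For the inductive step with $k \ge 3$, assume the conjecture holds for $k-1$ and $k-2$. Fix a connected $G \in \MM$ with BFS layering $V_0, V_1, \ldots$. The three operations $G'$, $G^+$, $G^{++}$ defining $\HH_{k,c}$ translate into three forbidden configurations in $G$: no vertex dominates $c$ disjoint connected subgraphs each containing an $\HH_{k-1,c}$ minor; no $k$-clique has $k(c-1)+1$ common neighbours forming a stable set; and no $(k-1)$-clique is completely adjacent to a long path. Using these constraints, the goal is to find, for each layer $V_i$, a small hitting set $X_i \subseteq V_i$ such that $G[V_i \setminus X_i]$ is $\HH_{k-1,c'}$-minor-free for some $c' = c'(k,c)$. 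Induction then gives a $(k-1)$-colouring of $G[V_i \setminus X_i]$ with bounded clustering; assign $X_i$ a fresh $k$-th colour, and use disjoint palettes on even- and odd-indexed layers to prevent monochromatic components from crossing layers, in the spirit of \cref{Heart,ColourNoWeakClosure}.

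The main obstacle is producing such a bounded hitting set $X_i$. This amounts to a far-reaching generalisation of the Erd\H{o}s--P\'osa theorem (\cref{ErdosPosa}) for the whole family $\HH_{k-1,c}$ in place of a single graph $H$, which is substantially more delicate because the family is infinite and itself recursively defined. The dependence of $\HH_{k,c}$ on both $\HH_{k-1,c}$ and $\HH_{k-2,c}$ forces simultaneous induction on two levels, further complicating the bookkeeping. Moreover, for an arbitrary minor-closed class $\MM$ rather than $\MM_H$, a global structural decomposition is needed to reduce to bounded-treewidth pieces: \cref{DeVos} provides a decomposition into two bounded-treewidth subgraphs, but the finer per-layer control needed here likely requires the full Graph Minor Structure Theorem, together with a careful analysis of vortices and apex vertices. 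I would expect even the $k=3$ case in full generality (beyond the $\ctd(H)=3$ families covered by \cref{td3}) to be a substantial first test of any proposed strategy.
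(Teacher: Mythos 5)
This statement is \cref{MinorConjecture}, which is posed as an open conjecture in the paper; no proof of it is given. What you have written is, correctly, not a proof but an assessment of which parts are known. Your argument for the lower bound $\cchi(\MM)\geq m$ via \cref{Rainbow} is exactly the argument the authors give in the bulleted remarks following the conjecture. Your handling of the $k=1$ case (bounded path length plus bounded degree forces bounded component size) and the $k=2$ case (reduction to \cref{2Colour}, using that $\HH_{2,c}$ consists of graphs minor-contained in a fan, fat path, and fat star) also matches the paper's remarks.

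Where you leave off is precisely where the paper leaves off: the upper bound $\cchi(\MM)\leq m$ for $k\geq 3$ is genuinely open, and the obstacle you name --- producing a bounded-size hitting set $X_i$ in each BFS layer whose removal kills all minors from $\HH_{k-1,c'}$, i.e.\ an Erd\H{o}s--P\'osa-type statement for the entire recursively-defined family $\HH_{k-1,c'}$ rather than a single graph --- is the right one to name. \cref{ErdosPosa} as used in \cref{Heart} is special to excluding a \emph{fixed} graph $H$ and bounded treewidth; extending it to an infinite family, and to arbitrary minor-closed $\MM$ (where \cref{DeVos} alone does not give the per-layer structure one needs), would require substantial new ideas. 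The double induction through both $G^+$ and $G^{++}$ (so through both $\HH_{k-1,c}$ and $\HH_{k-2,c}$) adds the further complication you flag. The paper only proves weaker consequences of the conjecture: \cref{Main} shows $\cchi(\MM_H)$ is bounded by a function of $\ctd(H)$, and \cref{td3,FatStar} handle small tree-depth. So your proposal is not a proof and does not claim to be; it is an accurate map of the known and unknown territory, consistent with the paper's own framing of the problem.
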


Several comments about \cref{MinorConjecture} are in order (see \cref{Claims} for proofs of the following claims):

\begin{itemize}
\item To prove the lower bound in \cref{MinorConjecture}, let $k$ be the minimum integer such that $\MM\cap \HH_{k,c}=\emptyset$ for some integer $c$. Thus for every integer $c$ some graph $G\in \HH_{k-1,c}$ is in $\MM$. By \cref{Rainbow}, $G$ has no $(k-1)$-colouring with clustering $c$. Thus $\cchi(\MM)\geq k$. 

\item Note that the $k=1$ case of \cref{MinorConjecture} is trivial: a graph is 1-colourable with bounded clustering if and only if each component has bounded size, which holds if and only if every path has bounded length and every vertex has bounded degree. 

\item We note that \cref{2Colour} implies \cref{MinorConjecture} with $k=2$. 
If $G = P_{c+1}$, then $G'$ is contained in the $c(c + 1)$-fan and $G^+$ is contained in the $(2c-1)$-fat path. If $G = K_{1,c}$, then $G'$ is the $c$-fat star and $G^+$ is contained in the $(2c-1)$-fat star. It follows that if a minor-closed class $\MM$ excludes every graph in $\HH_{2,c}$ for some $c$ , then $\MM$ excludes the $c(c+1)$-fan, the $(2c-1)$-fat path, and the $(2c- 1)$-fat star. Then $\cchi(\MM)\leq 2$ by \cref{2Colour}.

\item We now relate \cref{MinorConjecture,tdConjecture}. Fix a graph $H$. \cref{MinorConjecture} says that the clustered chromatic number of $\MM_H$ equals the minimum integer $k$ such that for some integer $c$, every graph in $\HH_{k,c}$ contains $H$ as a minor.  Let  $k:= \ctd(H)\geq 2$. An easy inductive argument shows that every graph in $\HH_{2k-2,c}$ contains a \Closure{k,c} minor.  Thus, for a suitable value of $c$, every graph in $\HH_{2k-2,c}$ contains $H$ as a minor. Hence, \cref{MinorConjecture} implies \cref{tdConjecture}.

\item Consider the case of excluding the complete bipartite graph $K_{s,t}$ as a minor for $s\leq t$.  Van den Heuvel and Wood~\cite{vdHW} proved the lower bound, $\cchi(\MM_{K_{s,t}}) \geq s+1$ for $t\geq\max\{s,3\}$. Their construction is a special case of the construction above.  
We claim that \cref{MinorConjecture} asserts that $\cchi(\MM_{K_{s,t}})=s+1$ for $t\geq \max\{s,3\}$. 
To see this, first note that an easy inductive argument shows that every graph in $\HH_{s+1,t}$ contains a $K_{s,t}$ subgraph; thus $\MM_{K_{s,t}}\cap \,\HH_{s+1,t}=\emptyset$. Furthermore, another easy inductive argument shows that for all $s,c\geq1$, there is a graph in 
$\HH_{s,c}$ containing no $K_{s,\max\{s,3\}}$ minor. This implies that $\MM_{K_{s,t}} \cap \HH_{s,c} \neq \emptyset$ for all $t\geq\max\{ s,3\}$. Together these observations show that $\min\{k:\exists c\; \MM_{s,t} \cap \HH_{k,c}=\emptyset\}=s+1$ for $t\geq\max\{ s,3\}$. That is, \cref{MinorConjecture} asserts that  $\cchi(\MM_{K_{s,t}})=s+1$ for $t\geq \max\{s,3\}$. Van den Heuvel and Wood~\cite{vdHW} proved the upper bound, $\cchi(\MM_{K_{s,t}})\leq 3s$ for $t\geq s$, which was improved to $2s+2$ by \citet{DN17}.

\end{itemize}

\section{An Alternative View}

We conclude the paper with alternative versions of \cref{tdH,MinorConjecture} that shift the focus to characterising minimal minor-closed classes of given defective and clustered chromatic number.  

We start with some definitions. 
Let $H$ and $G$ be two vertex-disjoint graphs, and let $S \subseteq V(G)$. Let $G'$ be obtained from $G \cup H$ by joining every vertex of $S$ to every vertex of $H$ by an edge. Then we say that $G'$ is obtained from $G$ by  \emph{taking a join with $H$ along $S$.} Let $\mc{H}$ be a class of graphs. We say that a graph $G'$ is an \emph{$\mc{H}$-decoration} of a graph $G$, if $G'$ is obtained from $G$ by repeatedly taking joins with graphs in $\mc{H}$ along cliques of $G$. For a class of graphs $\mc{G}$, let $\mc{G} \wedge \mc{H}$ denote the class of all minors of $\mc{H}$-decorations of graphs in $\mc{G}$. One can routinely verify that the $\wedge$ operation is associative. The examples below show that it is not always commutative.

First, we introduce notation for some minor-closed classes that will serve as the basis for our constructions. Let $\mc{I}$ denote the class of graphs on at most one vertex, let $\mc{O}$ denote the class of edgeless graphs, and let $\mc{P}$ denote the class of linear forests (that is,  subgraphs of paths). Let $\mc{T}_d$ denote the class of all graphs of tree-depth at most $d$. Then $\mc{T}_1$ is a class of all edgeless graphs. It follows from the alternative definition of tree-depth given in~\cite[Section 6.1]{Sparsity}  that $\mc{T}_{d+1} = \mc{O} \wedge \mc{T}_d$. 

 The operations used in \cref{MinorConjecture} can be described as follows.
	\begin{itemize}
		\item Adding a vertex adjacent to several copies of graphs in the class $\mc{G}$ (and taking all possible minors) produces the class $\mc{I} \wedge \mc{G}$.
		\item Adding stable sets complete to cliques in graphs in  $\mc{G}$ produces the class $ \mc{G} \wedge \mc{I}$.
		\item Adding paths complete to cliques in graphs in  $\mc{G}$   produces the class $\mc{G} \wedge \mc{P}$.
\end{itemize}

Note that by definition $\mc{G} \wedge \mc{H}$ is a minor-closed class for any pair of minor-closed classes $\mc{G}$ and $\mc{H}$. 

We next present an analogue of \cref{Rainbow} using the notions introduced above. A class of graphs $\mc{G}$ is \emph{$k$-cluster rainbow} (respectively, \emph{$k$-defect rainbow}) if for every $c$ there exists $G \in \mc{G}$ such that every colouring of $G$ with clustering (respectively, defect) at most $c$ contains a rainbow clique of size $k$. For example, $\mc{I}$ is $1$-cluster rainbow and $1$-defect rainbow, $\mc{P}$ is $2$-cluster rainbow, but not $2$-defect rainbow. Note that if a class of graphs $\mc{G}$ is $k$-cluster rainbow, then clearly $\cchi(\mc{G}) \geq k$. Similarly, if  $\mc{G}$ is $k$-defect rainbow, then $\dchi(\mc{G}) \geq k$.

The proof of the following lemma parallels the proof of \cref{Rainbow}; we present it for completeness.

\begin{lem}\label{Decorate} 
Let  $\mc{G},\mc{H}$ be graph classes, such that $\mc{G}$ is $k$-cluster rainbow and $\mc{H}$ is $\ell$-cluster rainbow. Then $\mc{G} \wedge \mc{H}$ is $(k+\ell)$-cluster rainbow.
\end{lem}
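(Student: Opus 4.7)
The strategy is to mimic the $G^+$ step in the proof of \cref{Rainbow}, replacing the stable set decoration by decoration with a suitably chosen $H \in \mc{H}$ that is forced to contain a rainbow $K_\ell$. Fix $c \geq 1$ and, using the hypotheses on $\mc{G}$ and $\mc{H}$, pick $G \in \mc{G}$ such that every colouring of $G$ with clustering at most $c$ contains a rainbow $K_k$, and pick $H \in \mc{H}$ such that every colouring of $H$ with clustering at most $c$ contains a rainbow $K_\ell$. Let $M := k(c-1)+1$ and construct $G^*$ from $G$ by, for each $k$-clique $Q$ of $G$, iteratively taking joins with $M$ pairwise disjoint copies of $H$ along $Q$. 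Each step is a join with a graph in $\mc{H}$ along a clique of the current graph, so $G^*$ is an $\mc{H}$-decoration of $G$ and hence lies in $\mc{G}\wedge \mc{H}$.

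Given any colouring $\phi$ of $G^*$ with clustering at most $c$, I would first restrict $\phi$ to $V(G)$. Since any monochromatic component of $G$ is connected in $G^*$ and therefore contained in a monochromatic component of $G^*$, this restriction still has clustering at most $c$. By the choice of $G$, it yields a rainbow $K_k$ on vertices $Q_0 = \{w_1,\dots,w_k\}$ using a set $C_0$ of $k$ distinct colours. Now consider the $M$ disjoint copies $H^{(1)},\dots,H^{(M)}$ of $H$ that are joined to $Q_0$. For each $i\in[1,k]$, any vertex of any $H^{(j)}$ coloured $\phi(w_i)$ is adjacent to $w_i$ and hence lies in the monochromatic component of $w_i$, which has at most $c$ vertices. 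Summing over $i$, at most $k(c-1)<M$ vertices in $\bigcup_{j} V(H^{(j)})$ carry a colour in $C_0$, so by pigeonhole some copy $H^{(j^*)}$ uses no colour of $C_0$.

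The restriction of $\phi$ to $V(H^{(j^*)})$ has clustering at most $c$, and by the choice of $H$ therefore contains a rainbow $K_\ell$ whose $\ell$ colours all avoid $C_0$. Since $H^{(j^*)}$ is joined to $Q_0$, the union of this $K_\ell$ with $Q_0$ induces a $K_{k+\ell}$ in $G^*$, and it is rainbow because the two colour sets are disjoint. As $c$ was arbitrary, $\mc{G}\wedge \mc{H}$ is $(k+\ell)$-cluster rainbow. The only substantive step is the pigeonhole argument controlling how many vertices across the $M$ copies of $H$ can reuse colours of $C_0$, which is essentially identical to the one used for $G^+$ in \cref{Rainbow}; the remaining bookkeeping—that restricted colourings retain clustering at most $c$, and that the iterated joins form a valid $\mc{H}$-decoration—is routine.
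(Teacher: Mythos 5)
Your proof is correct and takes essentially the same approach as the paper: decorate each $k$-clique of $G$ with $k(c-1)+1$ disjoint copies of $H$, find a rainbow $K_k$ in $G$, and use the clustering bound to argue (by pigeonhole) that some copy of $H$ supplies a rainbow $K_\ell$ on colours disjoint from those of the $K_k$. The only cosmetic difference is that the paper first extracts a rainbow $K_\ell$ in every copy and then pigeonholes the colour conflicts onto a single vertex of $S$, whereas you show a whole copy of $H$ avoids the colours of the $K_k$; both are the same count in slightly different order.
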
	

\pf{
Fix $c$, and let  $G \in \mc{G}$ and $H \in \mc{H}$ be such that every colouring of $G$   with clustering at most $c$ contains a rainbow clique of size $k$, and  every colouring of $H$   with clustering at most $c$ contains a rainbow clique of size $\ell$. Let $J$ be obtained from $G$ by taking a join of $G$ with $H$, $(c-1)k+1$ times along every clique $S$ of $G$. Then $J \in \mc{G} \wedge \mc{H}$ by definition. It remains to show that every colouring $\phi:V(J) \to C$ of $J$ for some set of colours $C$ with clustering at most $c$ contains a rainbow clique of size $k+\ell$. By the choice of $J$ there exists a clique $S$ in $G$ of size $k$, which is rainbow in $\phi$. Let $H_1,H_2,\ldots, H_r$ be copies of $H$ glued along $S$ to $G$. By the choice of $H$, for every $i$ there exists a clique $S_i$ of size $\ell$ in $H_i$ that is rainbow in $\phi$. Suppose for a contradiction that $S \cup S_i$ is not rainbow for any $i$. Then there exists $s \in S$ with a neighbour of the same colour in $S_i$  for at least $c$ choices of $i$. Thus $s$ belongs to a monochromatic  component of size at least $c+1$ in $\phi$, a contradiction.
}

Note that an analogue of \cref{Decorate} also holds for defective colourings. The proof is identical.

\medskip
Let $\mc{G}$ be a graph class obtained by taking a wedge-product of  $v$ copies of $\mc{I}$ and $p$ copies of  $\mc{P}$ in some order such that $v+2p=k+1$. Then we say that $\mc{G}$ is \emph{$k$-cluster critical}.  By \cref{Decorate} the clustered chromatic number of a $k$-cluster critical class is at least $k+1$.  (In fact, it is not difficult to see that equality holds.) For example, the class $\mc{I} \wedge \mc{P}$ of minors of fans, the class $\mc{I} \wedge \mc{I} \wedge \mc{I}$ of minors of fat stars, and the class  $\mc{P} \wedge \mc{I}$ of minors of fat paths are all possible $2$-cluster critical classes. Thus \cref{2Colour} is equivalent to the statement that $
\cchi(\mc{G}) \leq 2$ if and only if $\mc{G}$ contains no $2$-cluster critical class.

The discussion above implies that for all $k$ and $c$ every graph in $\mc{X}_{k,c}$ is a member of some $k$-cluster critical class. Conversely, for all $n,k$ there exists $c$ such that for every graph $G \in \mc{X}_{k,c}$ there exists a  $k$-cluster critical class  $\mc{G}$ such that $\mc{X}_{k,c}$ contains as minors all graphs in $\mc{G}$ on at most $n$ vertices. Thus \cref{MinorConjecture} can be reformulated as follows.
 
\begin{conj}\label{conj:cluster} Let $\mc{M}$ be a minor-closed class of graphs and $k \geq 0$ an integer. Then  $\cchi(\mc{G}) \geq k+1$ if and only if $\mc{G} \not \subseteq \mc{M}$ for some $k$-cluster critical class $\mc{G}$.
\end{conj}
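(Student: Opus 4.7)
The plan is to prove the two implications of the biconditional separately.

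For the ``if'' direction (existence of a $k$-cluster critical subclass $\mc{G} \subseteq \mc{M}$ implies $\cchi(\mc{M}) \geq k+1$), I would induct on the length of the wedge-product expression $\mc{G} = \mc{G}_1 \wedge \cdots \wedge \mc{G}_r$ with each $\mc{G}_i \in \{\mc{I},\mc{P}\}$. The base cases are $\mc{I}$, which is trivially $1$-cluster rainbow, and $\mc{P}$, which is $2$-cluster rainbow since a path on $c+1$ vertices must contain a bichromatic edge under any colouring with clustering $c$. The inductive step is immediate from \cref{Decorate}: wedging on a class of rainbow index $\ell$ adds $\ell$ to the rainbow index. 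Summing over factors yields the $(v+2p)=(k+1)$-cluster rainbow property for $\mc{G}$, so $\cchi(\mc{G}) \geq k+1$ and hence $\cchi(\mc{M}) \geq k+1$.

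The converse direction ($\cchi(\mc{M}) \geq k+1$ forces a $k$-cluster critical subclass of $\mc{M}$) is, by the discussion preceding the conjecture, equivalent to \cref{MinorConjecture}, and this is where all of the genuine difficulty lies. My plan is induction on $k$. The cases $k=0,1$ are straightforward, and $k=2$ follows from \cref{2Colour} via the authors' observation that the possible $2$-cluster critical classes are exactly the classes of minors of fans, fat stars, and fat paths. For the inductive step, assume $\mc{M}$ contains no $k$-cluster critical subclass and try to show $\cchi(\mc{M}) \leq k$. The relevant exclusions split into three kinds --- $\mc{I} \wedge \mc{G}'$, $\mc{G}' \wedge \mc{I}$, and $\mc{G}' \wedge \mc{P}$ --- with $\mc{G}'$ ranging over $(k-1)$-cluster critical (or $(k-2)$-cluster critical in the last case) classes. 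The first kind is amenable to a BFS-layering argument along the lines of \cref{Heart}: an appropriate Erd\H{o}s--P\'osa-type theorem should imply that in each BFS layer of $G \in \mc{M}$ there is a small hitting set whose removal leaves no $\mc{G}'$-subclass, after which two disjoint palettes of $k-1$ colours (one per parity) and one extra transversal colour give $k$ in total.

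The main obstacle lies in the decorations of the form $\mc{G}' \wedge \mc{I}$ and $\mc{G}' \wedge \mc{P}$, whose clique-sum-like structure is not captured by BFS layering. To treat these I would combine \cref{DeVos} (splitting $G$ into two bounded-treewidth pieces) with a class-level analogue of \cref{ErdosPosa}, aimed at extracting within each piece a small hitting set that simultaneously destroys every $\mc{G}' \wedge \mc{I}$ and $\mc{G}' \wedge \mc{P}$ structure. The hard step is formulating and proving an Erd\H{o}s--P\'osa theorem at the level of minor-closed \emph{classes} rather than for a single graph $H$, and then reconciling the hitting sets of the two \cref{DeVos} parts without spending additional colours. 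Since \cref{conj:cluster} contains the defective Hadwiger conjecture (\cref{tdH}) as a special case, a full resolution almost certainly demands structural tools that go beyond the graph minor structure theorem in its current form, and I expect this step to be where the argument stands or falls.
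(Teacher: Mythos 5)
The statement you were asked to prove is \cref{conj:cluster}, which the paper itself labels a \emph{conjecture} and does not prove: the authors present it only as a reformulation of the (equally open) \cref{MinorConjecture}. So there is no ``paper's own proof'' to compare against. Your proposal is honest about this and handles the two directions correctly as far as they can currently be handled.

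For the ``if'' direction your argument coincides with the paper's brief justification: \cref{Decorate} shows that wedge products add cluster-rainbow indices, so a wedge of $v$ copies of $\mc{I}$ and $p$ copies of $\mc{P}$ with $v+2p=k+1$ is $(k+1)$-cluster rainbow, hence has clustered chromatic number at least $k+1$; and if such a class lies in $\mc{M}$ then $\cchi(\mc{M})\geq k+1$. One detail worth flagging: $\mc{P}$ is $2$-cluster rainbow but, as the paper notes, not $2$-defect rainbow, so your induction is sound in the clustered setting but would not transfer to the defective analogue (\cref{conj:defect}) without modification.

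For the ``only if'' direction, you have (correctly) not supplied a proof; none is known. Your sketch --- BFS layering plus an Erd\H{o}s--P\'osa argument for the $\mc{I}\wedge\mc{G}'$ decorations, and \cref{DeVos} together with a hypothetical class-level Erd\H{o}s--P\'osa theorem for the $\mc{G}'\wedge\mc{I}$ and $\mc{G}'\wedge\mc{P}$ decorations --- is a plausible description of the difficulty and is consistent with the partial results the paper does establish (\cref{Heart}, \cref{HeartHeart}, \cref{2Colour}, \cref{FatStar}). But the class-level Erd\H{o}s--P\'osa theorem you invoke is not formulated, let alone proven, and since \cref{conj:cluster} implies \cref{tdH} any complete resolution would in particular settle that conjecture. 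The upshot: your proposal proves exactly what the paper proves and leaves open exactly what the paper leaves open, which is the correct account of the present state of knowledge for a conjecture rather than a theorem.
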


Similarly, note that the $k$-term  $\wedge$-product $\wedge^k\mc{I} = \mc{I} \wedge \mc{I} \wedge \ldots \wedge \mc{I}$ is the class of minors of connected graphs of tree-depth $k$ and therefore the following conjecture is equivalent to \cref{tdH}. 

\begin{conj}\label{conj:defect} 
Let $\mc{M}$ be a minor-closed class of graphs and $k \geq 0$ an integer. Then  $\dchi(\mc{G}) \geq k+1$ if and only if $\wedge^{k+1}\mc{I} \not \subseteq \mc{M}$.
\end{conj}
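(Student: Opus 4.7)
The plan is to establish this biconditional as equivalent to \cref{tdH}, as the paper asserts. I note however that the statement as worded---$\dchi(\mc{M}) \geq k+1$ iff $\wedge^{k+1}\mc{I} \not\subseteq \mc{M}$---appears to have the wrong sign on the inclusion: for instance, $\mc{M}=\MM_{K_{k+10}}$ satisfies $\dchi(\mc{M})=k+9\geq k+1$ by \citet{EKKOS15}'s theorem $\dchi(\MM_{K_t})=t-1$, yet contains every graph in $\wedge^{k+1}\mc{I}$ (since any graph of tree-depth $\leq k+1$ has no $K_{k+10}$ minor, by monotonicity of tree-depth under minors). I therefore read the ``$\not$'' as a typographical oversight and sketch a proof of the intended biconditional $\dchi(\mc{M}) \geq k+1 \iff \wedge^{k+1}\mc{I} \subseteq \mc{M}$; the argument below derives it directly from \cref{tdH} combined with a defect-analogue of \cref{Rainbow}.

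The nontrivial direction is the contrapositive of (the upper bound in) \cref{tdH}. If $\wedge^{k+1}\mc{I} \not\subseteq \mc{M}$, then some connected graph $H$ with $\ctd(H)\leq k+1$ is absent from $\mc{M}$, so by minor-closure $\mc{M}\subseteq \MM_H$. Monotonicity of $\dchi$ and \cref{tdH} then give
\[\dchi(\mc{M}) \leq \dchi(\MM_H) \leq \ctd(H)-1 \leq k,\]
and the contrapositive $\dchi(\mc{M}) \geq k+1 \Rightarrow \wedge^{k+1}\mc{I} \subseteq \mc{M}$ is the desired implication.

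For the reverse implication I would prove unconditionally that $\dchi(\wedge^{k+1}\mc{I}) \geq k+1$; monotonicity of $\dchi$ then gives $\wedge^{k+1}\mc{I} \subseteq \mc{M} \Rightarrow \dchi(\mc{M}) \geq k+1$. The argument is a defect-analogue of \cref{Rainbow}, by induction on $k$ applied to $\Closure{k+1,c}$: in any $k$-colouring with defect $d$, the root is adjacent to all of its descendants, so at most $d$ of them share the root's colour; for $c>d$ some subtree rooted at a child of the root (isomorphic to $\Closure{k,c}$) avoids the root's colour entirely and is thus $(k-1)$-coloured with defect $d$, contradicting the inductive hypothesis applied at level $k$.

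The main obstacle is \cref{tdH} itself, which remains open apart from the cases $H=K_t$, complete bipartite $H$, and $\ctd(H)\leq 3$. A complete proof of \cref{conj:defect} is therefore equivalent to a complete proof of \cref{tdH}, and any future progress on \cref{tdH} will translate directly into the corresponding instance of \cref{conj:defect}.
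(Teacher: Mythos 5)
This statement is a conjecture, not a theorem: the paper offers no proof, only the one-line remark that, since $\wedge^{k+1}\mc{I}$ is the class of minors of connected graphs of tree-depth $k+1$, it is equivalent to the (open) \cref{tdH}. Your proposal does exactly that and is correct: you rightly diagnose the typo in the printed statement (the intended biconditional is $\dchi(\mc{M})\geq k+1 \iff \wedge^{k+1}\mc{I}\subseteq\mc{M}$, as your $\MM_{K_{k+10}}$ example shows), you prove the easy direction unconditionally via the \citet{OOW}-style lower-bound argument on $\Closure{k+1,c}$, and you reduce the hard direction to the upper bound in \cref{tdH} --- which is precisely the equivalence the paper asserts, and is the most that can be done short of resolving \cref{tdH} itself.
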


\paragraph{Acknowledgement.} This research was initiated at the \emph{2017 Barbados Graph Theory Workshop} held at the Bellairs Research Institute. Thanks to the workshop participants for creating a stimulating working environment. Thanks to the referees for several instructive comments. 

  \let\oldthebibliography=\thebibliography
  \let\endoldthebibliography=\endthebibliography
  \renewenvironment{thebibliography}[1]{%
    \begin{oldthebibliography}{#1}%
      \setlength{\parskip}{0.2ex}%
      \setlength{\itemsep}{0.2ex}%
  }%
  {%
    \end{oldthebibliography}%
  }


\def\soft#1{\leavevmode\setbox0=\hbox{h}\dimen7=\ht0\advance \dimen7
  by-1ex\relax\if t#1\relax\rlap{\raise.6\dimen7
  \hbox{\kern.3ex\char'47}}#1\relax\else\if T#1\relax
  \rlap{\raise.5\dimen7\hbox{\kern1.3ex\char'47}}#1\relax \else\if
  d#1\relax\rlap{\raise.5\dimen7\hbox{\kern.9ex \char'47}}#1\relax\else\if
  D#1\relax\rlap{\raise.5\dimen7 \hbox{\kern1.4ex\char'47}}#1\relax\else\if
  l#1\relax \rlap{\raise.5\dimen7\hbox{\kern.4ex\char'47}}#1\relax \else\if
  L#1\relax\rlap{\raise.5\dimen7\hbox{\kern.7ex
  \char'47}}#1\relax\else\message{accent \string\soft \space #1 not
  defined!}#1\relax\fi\fi\fi\fi\fi\fi}

\appendix
\section{Proofs of Claims in \cref{Conjecture}}
\label{Claims}

This appendix includes proofs of several claims made in the comments after \cref{MinorConjecture}.

\begin{lem}
\label{ContainedInClique}
For every graph $G$ in $\HH_{k,c}$, every clique in $G$ is contained in a $(k+1)$-clique in $G$.
\end{lem}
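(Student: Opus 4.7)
The plan is to prove the claim by induction on $k$. For the base case $k=1$, every graph in $\HH_{1,c}=\{P_{c+1},K_{1,c}\}$ is connected with an edge, so its cliques (which have size at most $2$) are contained in a $2$-clique. For the inductive step, I would fix a clique $K$ in some $G\in\HH_{k,c}$ and treat each of the three construction operations separately; in each case the key observation is that the neighbourhood of any newly-added vertex is small and well-understood, which forces $K$ to be essentially a subset of an old clique plus one or two new vertices.

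For operation~1, where $G'$ is the cone (over $c$ copies of some $G_0\in\HH_{k-1,c}$) with apex $v$: the set $K\setminus\{v\}$ lies inside one copy of $G_0$, so by induction it extends to a $k$-clique $K^\star$ in that copy, and $K^\star\cup\{v\}$ is the desired $(k+1)$-clique.

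For operation~2, where $G^+$ is obtained from $G_0\in\HH_{k-1,c}$ by joining a stable set $S_D$ to every $k$-clique $D$: each added vertex $u\in S_D$ has neighbourhood exactly $D$, so two added vertices from different stable sets $S_D\neq S_{D'}$ are non-adjacent (they lie in stable sets and have disjoint neighbourhoods among the new vertices). Therefore $K$ meets the new vertices in at most one element $u\in S_D$, and then $K\setminus\{u\}\subseteq N(u)=D$, so $K\subseteq D\cup\{u\}$ is already contained in a $(k+1)$-clique. If instead $K\subseteq V(G_0)$, induction gives a $k$-clique $D\supseteq K$ in $G_0$, and adjoining any $u\in S_D$ gives a $(k+1)$-clique.

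For operation~3, where $G^{++}$ is obtained from $G_0\in\HH_{k-2,c}$ by joining a path $P_D$ to every $(k-1)$-clique $D$: the same disjointness reasoning shows $K$ intersects at most one $P_D$. Writing $K_G:=K\cap V(G_0)$ and $K_P:=K\cap P_D$, the structure of $P_D$ (a path) forces $|K_P|\leq 2$, and completeness of $P_D$ to $D$ combined with the fact that each $u\in P_D$ has no neighbours in $V(G_0)$ outside $D$ forces $K_G\subseteq D$. Extending $K_P$ to an edge $\{u,w\}$ of $P_D$ (possible since $P_D$ has at least two vertices), the set $D\cup\{u,w\}$ is a $(k+1)$-clique containing $K$. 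The remaining sub-case $K_P=\emptyset$ is handled by applying induction to $G_0\in\HH_{k-2,c}$ to find a $(k-1)$-clique $D\supseteq K$ and then adjoining any edge of $P_D$.

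The only real subtlety is the bookkeeping in operation~3: one must verify both that the added vertices from different paths cannot simultaneously lie in $K$ and that the ``old part'' $K_G$ of $K$ must entirely lie inside the specific clique $D$ that was joined to $P_D$. Once these two facts are in hand, the inductive step is immediate and the claim follows.
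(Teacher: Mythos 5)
Your proposal is correct and takes essentially the same route as the paper: induction on $k$ with a case analysis over the three operations, handling in each case whether the clique meets the newly added vertices or not. Your treatment of the $G^{++}$ case is if anything slightly more careful about why the intersection with a path has at most two vertices and extends to an edge, but the underlying argument is the same.
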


\begin{proof}
We proceed by induction on $k\geq 1$. The case $k=1$ holds by construction. 
Now assume that $k\geq2$ and the claim holds for smaller values of $k$. 

First, consider a clique $C$ in a graph $G'\in \HH_{k,c}$ for some $G\in \HH_{k-1,c}$. 
Let $v$ be the dominant vertex in $G'$. 
If $C$ is contained in some copy of $G$, then by induction, $C$ is contained in a $k$-clique in $G$, 
and adding $v$, we find that $C$ is contained in a $(k+1)$-clique  in $G'$. 
Otherwise $C$ includes $v$. 
By induction, $C-v$ is contained in a $k$-clique in $G$, and adding $v$, again $C$ is contained in a $(k+1)$-clique  in $G'$. 

Now consider a clique $C$ in a graph $G^+\in \HH_{k,c}$ for some $G\in \HH_{k-1,c}$. 
If $C$ is contained in $G$, then by induction, $C$ is contained in a $k$-clique in $G$. 
By construction, $C$ is contained in a $(k+1)$-clique in $G^+$. 
Otherwise $C$ includes a vertex $v$ in $G^{+}$ that is not in $G$. 
By construction, the neighbourhood of $v$ is a $k$-clique, and $C$ is contained in the neighbourhood of $v$. 
Thus $C$ is contained in a $(k+1)$-clique in $G^{++}$. 

Finally consider a clique $C$ in a graph $G^{++}\in \HH_{k,c}$ for some $G\in \HH_{k-2,c}$. 
If $C$ is contained in $G$, then by induction, $C$ is contained in a $(k-1)$-clique $D$ in $G$. 
Including two consecutive vertices in the path complete to $D$, we find that $C$ is contained in a $(k+1)$-clique in $G^{++}$. 
Otherwise $C$ includes a vertex $v$ in $G^{++}$ that is not in $G$. 
By construction, the neighbourhood of $v$ contains a $k$-clique, and $C$ is contained in the neighbourhood of $v$. 
Thus $C$ is contained in a $(k+1)$-clique in $G^{++}$. 
\end{proof}

\begin{lem} 
For $k\geq 2$, every graph in $\HH_{2k-2,c}$ contains a \Closure{k,c} subgraph.
\end{lem}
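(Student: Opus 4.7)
My plan is to prove this by induction on $k \geq 2$. The base case $k = 2$ asserts that every graph in $\HH_{2,c}$ contains $K_{1,c} = \Closure{2,c}$ as a subgraph; this is a short case check, since each of the three recursive operations defining $\HH_{2,c}$ produces a vertex of degree at least $c$ (either the dominant vertex added by $'$, or an apex of the clique to which a stable set of size $2(c-1)+1 \geq c$ is attached under $^+$).

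The central technical ingredient of the inductive step is a ``closure-depth bumping'' sub-claim: if $G \in \HH_{m,c}$ contains $\Closure{j,c}$ as a subgraph, then each of $G'$, $G^+$, and $G^{++}$ contains $\Closure{j+1,c}$ as a subgraph. The $G'$ case is immediate: take the new dominant vertex as the root, and use one copy of $\Closure{j,c}$ from each of the $c$ copies of $G$. For $G^+$ and $G^{++}$, I would apply \cref{ContainedInClique} to the ancestor $j$-clique $C_\ell$ of each leaf $\ell$ of the given $\Closure{j,c}$ subgraph, obtaining an $(m+1)$-clique $D_\ell$ in $G$ with $C_\ell \subseteq D_\ell$. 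The $^+$ operation attaches a stable set of $(m+1)(c-1)+1$ vertices to $D_\ell$, and $^{++}$ attaches a path of $(c^2-1)(m+1)+(c+1)$ vertices; each such new vertex is adjacent to $\ell$ and every ancestor of $\ell$, so any $c$ distinct such vertices can serve as the new children of $\ell$ at depth $j+1$. The subtle point is distinctness when several leaves share the same $D$: if $K$ leaves are assigned to $D$, then those leaves together with their ancestors form a subtree $U$ of $\Closure{j,c}$ contained in $D$, so $|U| \leq m+1$, and a short induction on $j$ (splitting by the root's child subtrees of $\Closure{j,c}$) shows that $Kc \leq |U|(c-1)+1$, with equality exactly when $U$ is the full $\Closure{j,c}$. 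Hence $Kc \leq (m+1)(c-1)+1$, which is exactly the size of the stable set added on $D$ by $^+$ (and the path added by $^{++}$ is strictly longer), leaving enough room to pick $c$ distinct new children for each of the $K$ leaves.

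To conclude the induction at level $k$, I would trace how $H \in \HH_{2k-2,c}$ was built. If $H = G^{++}$ for $G \in \HH_{2k-4,c}$, the inductive hypothesis gives $\Closure{k-1,c} \subseteq G$, and the bumping sub-claim immediately upgrades this to $\Closure{k,c} \subseteq H$. If $H = G'$ or $H = G^+$ for $G \in \HH_{2k-3,c}$, I would further unpack $G$ as arising from some $G_1 \in \HH_{2k-4,c}$ (via $'$ or $^+$) or $G_1 \in \HH_{2k-5,c}$ (via $^{++}$); in the first case the inductive hypothesis already gives $\Closure{k-1,c} \subseteq G_1 \subseteq G$, and in the second case applying the bumping sub-claim to $G_1$ does. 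One final application of the sub-claim (for $H = G^+$) or directly adding a new dominant vertex with $c$ copies (for $H = G'$) then yields $\Closure{k,c} \subseteq H$. The main obstacle is the counting inequality $Kc \leq |U|(c-1)+1$ in the bumping sub-claim; once this is in hand, the remainder of the proof is a routine unpacking of the recursive definition of $\HH_{k,c}$.
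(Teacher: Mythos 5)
Your approach follows the paper's overall strategy---induction on $k$, using the lemma that every clique in a graph of $\HH_{m,c}$ lies in an $(m+1)$-clique to place each leaf's ancestor path inside a big clique, and then drawing $c$ new children for that leaf from the stable set or path attached to that clique. There is, however, a gap in your unpacking step at the end. When $H = G'$ or $H = G^+$ with $G = G_1^{++}$ and $G_1 \in \HH_{2k-5,c}$, you invoke the bumping sub-claim on $G_1$ to get $\Closure{k-1,c} \subseteq G$, but that sub-claim requires $\Closure{k-2,c} \subseteq G_1$ as input, and the inductive hypothesis only covers classes of the form $\HH_{2k'-2,c}$ for $k' < k$, which is never $\HH_{2k-5,c}$. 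Unpacking $G_1$ one step further could again land you at an odd index if $G_1$ itself was built by $^{++}$, so the recursion does not visibly terminate. The paper resolves this by asserting that every $G \in \HH_{2k-3,c}$ contains a graph of $\HH_{2k-4,c}$ as a subgraph; when $G = G_1^{++}$ this is because $G_1^{++} \supseteq G_1^+$ (both operations attach their new vertices to exactly the $(2k-4)$-cliques of $G_1$, and any $(2k-4)(c-1)+1$ pairwise non-adjacent vertices on the long path attached by $^{++}$ can substitute for the stable set attached by $^+$, which is possible since the path is more than twice as long). Adding this observation, or restating your induction as ``every graph in $\HH_{m,c}$ contains $\Closure{\lfloor m/2\rfloor+1,c}$'', would close the gap.

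On the other hand, your counting inequality $Kc \leq |U|(c-1)+1$ for leaves sharing the same clique $D$ is more careful than what the paper writes for the same point. The paper's proof simply asserts $S_v \cap S_w = \emptyset$ for distinct leaves $v,w$, but the cliques $D_v$ and $D_w$ produced by the containment lemma are not forced to differ---they can coincide if $v$ and $w$ happen to be adjacent in $G$ via an edge outside the $\Closure{k-1,c}$ subgraph---and in that case $S_v = S_w$. Your inequality is exactly what is needed to show that even when several leaves share $D$, the set attached to $D$ is large enough to provide $c$ pairwise distinct new children to each of them, and your inductive proof of the inequality over the subtree $U$ is correct. So this part of your argument is a genuine sharpening of what appears in the paper.
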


%
%
%
%
\pf{
We proceed by induction on $k\geq 2$. First consider the base case, $k=2$. 
Consider a graph $G'$, $G^+$ or $G^{++}$ in $\HH_{2,c}$.
By construction, $G'$ contains $K_{1,c}$, which is isomorphic to $\Closure{2,c}$. 
By construction, $G^+$ contains $K_{2,2c-1}$, which contains $\Closure{2,c}$. 
Note that $G^{++}$ does not apply in the $k=2$ case. 
Now assume that $k\geq 3$, and for $\ell\leq k-1$, 
every graph in $\HH_{2\ell-2,c}$ contains a \Closure{\ell,c} subgraph.
Consider a graph $G'$, $G^+$ or $G^{++}$ in $\HH_{2k-2,c}$.

First consider $G'$ for some $G\in \HH_{2k-3,c}$. 
By construction, $G$ contains some graph in $\HH_{2k-4,c}$ as a subgraph.
By induction, $G$ contains  a  \Closure{k-1,c} subgraph.
By construction, $G'$ contains  a  \Closure{k,c} subgraph.

Now consider $G^+$ for some $G\in \HH_{2k-3,c}$. 
By construction, $G$ contains some graph in $\HH_{2k-4,c}$ as a subgraph.
By induction, $G$ contains  a  \Closure{k-1,c} subgraph. 
Say $r$ is the root vertex in \Closure{k-1,c}.
For each leaf vertex $v$, the $vr$-path in the tree induces a $(k-1)$-clique in \Closure{k-1,c}, 
which is contained in a $(2k-2)$-clique $D_v$ in $G$ by \cref{ContainedInClique}. 
By construction, 
in $G^+$ there is a set $S_v$ of $k(c-1)+1\geq c$ vertices complete to $D'_v$. 
Moreover, $S_v\cap S_w=\emptyset$ for distinct leaves $v,w$. 
It follows that $G^+$ contains a $\Closure{k,c}$ subgraph.

Finally, consider $G^{++}$ for some $G\in \HH_{2k-4,c}$. 
By induction, $G$ contains  a  \Closure{k-1,c} subgraph. 
Say $r$ is the root vertex in \Closure{k-1,c}.
For each leaf vertex $v$, the $vr$-path induces a $(k-1)$-clique in \Closure{k-1,c}, 
which is contained in a $(2k-3)$-clique $D_v$ in $G$ by \cref{ContainedInClique}. 
By construction, in $G^{++}$ there is a set $S_v$ of $(c^2-1)(k-1)+ (c+1)\geq c$ vertices complete to $D_v$. 
Moreover, $S_v\cap S_w=\emptyset$ for distinct leaves $v,w$. 
It follows that $G^{++}$ contains a $\Closure{k,c}$ subgraph.
}

\begin{lem} 
For $s\geq 1$, every graph in $\HH_{s+1,t}$ contains a $K_{s,t}$ subgraph. 
\end{lem}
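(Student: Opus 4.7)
The plan is induction on $s \geq 1$, leveraging \cref{ContainedInClique} for two of the three constructions so that only the $G'$ case genuinely needs the inductive hypothesis.

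For the base case $s = 1$, inspect the three operations producing $\HH_{2,t}$ from $\HH_{1,t}$. If $G^* = G'$, then the dominant vertex is adjacent to at least one vertex in each of the $t$ disjoint copies of $G$, giving a $K_{1,t}$. If $G^* = G^+$, then $G \in \HH_{1,t} = \{P_{t+1}, K_{1,t}\}$ contains a $2$-clique (indeed, \cref{ContainedInClique} with $k = 1$ says so), and attached to it is a stable set of size $2(t-1)+1 = 2t-1 \geq t$, yielding a $K_{2,2t-1} \supseteq K_{1,t}$. The $G^{++}$ operation is vacuous in the base case because it requires $k \geq 3$.

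For the inductive step, assume the claim for $s-1$ and let $G^* \in \HH_{s+1,t}$ be produced from some $G$ by one of the three operations. If $G^* = G'$ with $G \in \HH_{s,t}$, the inductive hypothesis gives a $K_{s-1,t}$ subgraph in one copy of $G$, with parts $A$ and $B$ of sizes $s-1$ and $t$; the dominant vertex of $G'$ is adjacent to every vertex of $B$, so $A \cup \{v\}$ and $B$ form a $K_{s,t}$. If $G^* = G^+$ with $G \in \HH_{s,t}$, then \cref{ContainedInClique} (applied to $G$ with index $k = s$) guarantees an $(s+1)$-clique $D$ in $G$; the construction attaches a stable set of size $(s+1)(t-1)+1 \geq t$ complete to $D$, so any $s$ vertices of $D$ together with any $t$ vertices of that stable set form a $K_{s,t}$. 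If $G^* = G^{++}$ with $G \in \HH_{s-1,t}$ (which only happens when $s \geq 2$), then \cref{ContainedInClique} (applied to $G$ with index $k = s-1$) gives an $s$-clique $D$ in $G$, and the attached path on $(t^2-1)(s)+(t+1) \geq t$ vertices is complete to $D$, so $D$ and any $t$ vertices of that path form a $K_{s,t}$.

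There is no substantial obstacle here: the induction is needed only in the $G'$ case (where we genuinely must propagate the existence of the size-$t$ part through levels), while the $G^+$ and $G^{++}$ cases reduce immediately to the clique-containment statement \cref{ContainedInClique}, followed by a bookkeeping check that $(s+1)(t-1)+1 \geq t$ and $(t^2-1)s + (t+1) \geq t$. The only mild subtlety is confirming that \cref{ContainedInClique} is invoked at the correct index (namely, at $k = s$ for $G^+$ and at $k = s-1$ for $G^{++}$) so that the clique sizes in $G$ precisely match the clique sizes along which the join is taken in the construction of $G^{\pm}$.
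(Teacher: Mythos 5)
Your proof is correct and takes essentially the same route as the paper: induction on $s$, with the dominant vertex handling $G'$ and a clique-plus-attached-set argument handling $G^+$ and $G^{++}$. The only cosmetic difference is that you extract the needed clique in $G$ via \cref{ContainedInClique} where the paper instead invokes \cref{Rainbow}; both are valid routes to the same conclusion, and your index bookkeeping (tracking which class $G$ lies in and hence which clique size to request) in the $G^{++}$ case is a touch more careful than the paper's.
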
 

\pf{
We proceed by induction on $s\geq 1$. 
Let $k=s+1$ and $c=t$. 
Consider $G'$, $G^+$ or $G^{++}$ in $\HH_{k,c}$ for some $G\in \HH_{k-1,c}$. 
Since $G\in \HH_{k-1,c}$, by \cref{Rainbow}, $G$ contains $K_{s+1}$. 
Since $k(c-1)+1 \geq t $, by construction, $G^+$ contains a $K_{s+1,t}$ subgraph. 
Since $(c^2-1)(k-1)+(c+1) \geq t $, by construction, $G^{++}$ contains a $K_{s,t}$ subgraph. 
Now consider $G'$ for some $G\in \HH_{s,t}$. 
If $s=2$ then $G'$ contains $K_{1,t}$ since $|V(G)|\geq t$. 
Now assume that $s\geq 3$. 
By induction, $G$ contains a $K_{s-1,t}$ subgraph.  
By construction, $G'$ contains a $K_{s,t}$ subgraph. 
This completes the proof. 
}

\begin{lem} 
For $s,c\geq 1$, if $t(s)= \max\{s,3\}$ then there is a $K_{s,t(s)}$-minor-free graph in $\HH_{s,c}$.
\end{lem}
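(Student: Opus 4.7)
The plan is to prove the claim by induction on $s$, iteratively applying the first of the three $\HH_{s,c}$-constructions (adding one dominant vertex over $c$ copies). For the base case I take $G_1:=P_{c+1}\in\HH_{1,c}$; since $P_{c+1}$ has maximum degree $2$ and edge contractions cannot increase any degree, $G_1$ contains no $K_{1,3}$ minor. Note that the choice of $P_{c+1}$ over $K_{1,c}$ is essential, since $K_{1,c}$ already has a $K_{1,3}$ subgraph once $c\geq 3$.

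For the inductive step ($s\geq 2$), I define $G_s:=(G_{s-1})'\in \HH_{s,c}$ to be the graph obtained from $c$ vertex-disjoint copies of $G_{s-1}$ by adding a single dominant vertex $v_s$. Suppose for contradiction that $G_s$ has a $K_{s,t(s)}$ minor with branch sets $A_1,\dots,A_s$ and $B_1,\dots,B_{t(s)}$. The key structural observation is that $G_s-v_s$ is a disjoint union of $c$ copies of $G_{s-1}$, so any branch set avoiding $v_s$ is contained in a single copy, and two such branch sets lying in different copies cannot be adjacent in $G_s$.

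I would then split into two cases according to the side containing $v_s$. If $v_s\in A_i$, then each of the remaining $s-1$ side-$A$ branch sets must share its copy with every side-$B$ branch set to preserve adjacency, forcing all $s-1+t(s)$ of them to lie in one copy and exhibiting a $K_{s-1,t(s)}$ minor in $G_{s-1}$. Symmetrically, if $v_s\in B_j$, I obtain a $K_{s,t(s)-1}$ minor in $G_{s-1}$. In each case a short check contradicts the inductive hypothesis: for $s\geq 4$ both minors contain $K_{s-1,s-1}=K_{s-1,t(s-1)}$ as a subgraph; for $s=3$ both cases yield exactly $K_{2,3}=K_{s-1,t(s-1)}$ in $G_2$; and for $s=2$ the two alternatives become $K_{1,3}$ and $K_{2,2}$ as minors of $G_1=P_{c+1}$, both visibly impossible (by bounded degree and acyclicity, respectively).

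The main obstacle I anticipate is the boundary behaviour of $t(s)$ when $s$ is small: for $s\geq 4$ one has $t(s)=s$ and both cases collapse uniformly to the same inductive contradiction, whereas for $s\leq 3$ one must separately exploit the specific geometry of $G_1$ (no cycles, no vertex of degree~$3$) rather than a uniform inductive inequality. Once this low-$s$ boundary is handled, the rest is a clean application of the cut-vertex structure of the $G\mapsto G'$ operation, and no recourse to the $G^+$ or $G^{++}$ constructions is needed.
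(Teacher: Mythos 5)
Your proof is correct and follows essentially the same route as the paper: the same iterated-$G'$ construction starting from $P_{c+1}$, and the same case split on which side of the assumed $K_{s,t(s)}$-model contains the dominant vertex, yielding a $K_{s-1,t(s)}$ or $K_{s,t(s)-1}$ minor inside a single copy of $G_{s-1}$. The only difference is cosmetic: for $s=2,3$ the paper shortcuts via outerplanarity of the fan and planarity of $G''$, whereas you fold these into the same inductive scheme by directly verifying that $P_{c+1}$ admits no $K_{1,3}$ or $K_{2,2}$ minor; both are valid.
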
 

\pf{
We proceed by induction on $s\geq 1$. 
In the case $s=1$, note that $P_{c+1}$ is in $\HH_{1,c}$ and $P_{c+1}$ contains no $K_{1,3}$ minor. 
In the case $s=2$, if $G=P_{c+1}$ then $G'$ is in $\HH_{2,c}$ and $G'$ contains no $K_{2,3}$ minor (since $G'$ is outerplanar). 
In the case $s=3$, if $G''=(G')'$, then $G'$ is in $\HH_{3,c}$ and $G''$ contains no $K_{3,3}$ minor (since $G''$ is planar). 
Now assume that $s\geq 4$ and there is a $K_{s-1,t(s-1)}$-minor-free graph $G$ in $\HH_{s-1,c}$.
Consider $G'$ in $\HH_{s,c}$. 
Suppose that $G'$ contains a $K_{s,t(s)}$ minor. Then deleting the dominant vertex from $G'$, we find that $G$ contains $K_{s-1,t(s)}$ or $K_{s,t(s)-1}$ as a minor. Since $t(s)=t(s-1)+1$, in both cases, $G$ contains $K_{s-1,t(s-1)}$ as a minor. 
This contradiction shows that $G'$ contains no $K_{s,t(s)}$ minor. 
}
\end{document}